\title{
	Hamiltonian and Pseudo-Hamiltonian Cycles and Fillings In Simplicial Complexes 
}
\author{Rogers~Mathew, $~ $ Ilan~Newman, $~ $ Yuri~Rabinovich and 
 $~ $ Deepak~Rajendraprasad}
\definecolor{Blue}{rgb}{0.3,0.3,0.9}
\definecolor{Black}{rgb}{0.0,0.0,0.0}
\newcommand{\ignore}[1]{}
\def \qed {\hspace*{0pt} \hfill {\quad \vrule height 1ex width 1ex depth 0pt}
 \medskip}
\newenvironment{proof}{\par\noindent{\bf Proof.}\quad}{  $\qed$ \newline}
\newtheorem{theorem}{Theorem}[section]
\newtheorem{corollary}[theorem]{Corollary}
\newtheorem{conjecture}[theorem]{Conjecture}
\newtheorem{lemma}[theorem]{Lemma}
\newtheorem{claim}[theorem]{Claim}
\newtheorem{definition}[theorem]{Definition}
\def \deficit {{\rm deficit}}
\def \xor {\oplus}
\def\into{\longrightarrow}
\def\N{\mathbb{N}}
\def\Q{\mathbb{Q}}
\def\R{\mathbb{R}}
\def\F{\mathbb{F}}
\def\Link{Lk}
\def\Star{St}
\def\Cone{Cone}
\def\Fill{{\rm Fill}}
\def\FILL{{\rm FILL}}
\def\supp{supp}
\def\boundary{\partial}
\def\C{\mathcal{C}}
\newcommand{\Rmnum}[1]{\expandafter\@slowromancap\romannumeral #1@}
\begin{document}

\maketitle

\begin{abstract}
We introduce and study a $d$-dimensional generalization of Hamiltonian
cycles in graphs -  the Hamiltonian $d$-cycles in $K_n^d$ (the complete
simplicial $d$-complex over a vertex set of size $n$).
Those are the simple $d$-cycles of a complete rank, or, equivalently, of
size $1 + {{n-1} \choose d}$. 

The discussion is restricted to the fields $\F_2$ and $\Q$.
For $d=2$, we characterize the $n$'s for which Hamiltonian $2$-cycles exist.
For $d=3$ it is shown that Hamiltonian $3$-cycles exist for infinitely many $n$'s. 
In general, it is shown that there always exist simple $d$-cycles of size
${{n-1} \choose d} - O(n^{d-3})$. All the above results are constructive.

Our approach naturally extends to (and in fact, involves) $d$-fillings, generalizing the
notion of $T$-joins in graphs. Given a $(d-1)$-cycle $Z^{d-1} \in K_n^d$,
~$F$ is its $d$-filling if $\partial F =   Z^{d-1}$.  We call a $d$-filling
Hamiltonian if it is acyclic and of a complete rank, or, equivalently, is  of
size ${{n-1} \choose d}$. 
If a Hamiltonian $d$-cycle $Z$ over $\F_2$  
contains  a $d$-simplex $\sigma$, then $Z\setminus \sigma$ is a a
Hamiltonian $d$-filling of $\partial \sigma$ (a closely related fact  is also true for
cycles over $\Q$). Thus, the two 
notions are closely related.

Most of the above results about Hamiltonian $d$-cycles hold for 
Hamiltonian $d$-fillings as well.
\end{abstract}

{\bf Categories:  math.CO, math.AT}
\section{Introduction}
Combinatorial topology (more precisely, the Homology theory for simplicial complexes)  provides a natural framework 
allowing to generalize the fundamental graph-theoretic 
notions such as cycles, trees, cuts, expanders, Laplacians, etc., to $(d+1)$-uniform hypergraphs, viewed as pure 
$d$-dimensional simplicial complexes. 
Historically, this framework was used and developed mostly to serve the needs of other disciplines, 
first and foremost the Algebraic Topology, and, more recently, e.g., the digital processing of visual data. In recent decades it came under investigation for its own sake, resulting in new beautiful results and applications, see~\cite{linial-peled, LuMe, MSTW, 
Golubev} to name but a few.  

The key notions studied in this paper are  $d$-cycles and
acyclic $d$-fillings of a maximum possible size. For simplicity consider first the one-dimensional case over the field $\F_2$, i.e., graphs. Given a set $E$ of edges over the vertex set $V$, define $\partial_1 E $, the boundary of $E$, as the set of all
vertices incident to an odd number of edges in $E$.
The set $E$ is  a $1$-cycle if $\partial_1(E) =\emptyset$. A set
$E$ is called acyclic if it contains no cycles. A maximal
acyclic set is called a $1$-tree. It is a basic fact that all maximal acyclic sets have the same size, which is $|V|-1$. It is a simple exercise to show that for any even set of vertices$Z \subset V$ \footnote{This is a
$0$-dim cycle, see Section \ref{sectionSimplicialComplexes}.}, there exists a set of edges 
$F$ over $V$ with $\partial_1 F=Z$. I.e., it is a graph whose set of odd degree
vertices is $Z$. Such $F$ is classically called a $Z$-join. 
In view of higher-dimensional generalization to come, we shall call it
 a $1$-filling of $Z$. It is easy to verify that there exists a 1-fillings of $Z$
 that is acyclic. In a special case when $Z = \{a,b\}$,  an acyclic filling of $Z$ of the largest possible size is a Hamiltonian path
whose end points are $Z$. Together with the pair $(a,b)$ it forms
a Hamiltonian cycle - the largest possible simple
cycle (that is, as cycle that does not contain a proper cycle as a subset).

This can naturally be generalized to higher dimensions: instead of
pairs, let $T$ be a set of triplets (sets of size $3$) over the set
of vertices $V$. In this case the boundary $\partial_2(T)$ (over $\F_2$) is the set
of pairs of vertices, each that is incident to an odd number of
triangles. Again, a $2$-cycle is a set of triplets with empty boundary, and
acyclic sets of triplets are those containing no cycles. A simple
cycle is a cycle that does not contain a proper subset that is by itself a cycle. It turns
out (from the same algebraic reasoning as for graphs) that all the maximal acyclic sets
 have the same size, which is ${n-1 \choose 2}$, where $n= |V|$. 
In addition,   any $1$-cycle $Z$ over $V$ has an acyclic $2$-filling
$F$, i.e., an acyclic set of triplets $F$ with $\partial_2 F =
Z$. 

How large can a simple $2$-cycle over an $n$-size vertex set be?  The two-dimensional case is much less
obvious, and to our best knowledge, was not systematically studied so
far.
The following upper bound is simple: the removal of a triangle form a simple
cycle creates an acyclic set. Since all acyclic sets are of size at
most  $r(n,2)={{n-1} \choose 2}$, it follows that an absolute upper
bound is $r(n,2)+1$.  Would such a simple $2$-cycle exist, it would  be
called Hamiltonian $2$-cycle. Note the connection to fillings:  If
$Z$ is a simple $2$-cycle containing a triangle $\sigma$, then $Z
\setminus \{\sigma\}$ is a $2$-filling of the three pairs that are the
boundary of $\sigma$ (and are a $1$-cycle).

 For the lower bound on the largest $2$-simple cycle, is has been known for some time that there exist simple $2$-cycles of size $c_2 \cdot r(n,2)$ for some constant $0<c_2<1$. E.g., the important Complete Graph Embedding Theorem 
(implying the tightness of Heawood's bounds on the chromatic number of graphs embeddable
in 2-surfaces of a prescribed genus; see e.g., the book~\cite{Gross-Tucker}) 
claims that any $K_n$, $n\geq 4, \; n\equiv 0, 1 \, ({\rm mod}\, 3)$, is (efficiently) realizable as a triangulation of (both orientable, and nonorientable) 2-surface. This gives an explicit construction of a simple $2$-cycle of a size $\approx {2\over 3} r(n,2)$. 

All the above notions are generalized to higher dimensions. In this
case the size of a maximum simple $d$-cycle on $V$ of size n is at most $r(n,d)+1$, where $r(n,d)= 
{{n-1} \choose d}$, due to the rank argument, and at least $c_d \cdot r(n,d)$ for some (small) constant $c_d>0$.
This follows, e.g., from the study of the threshold probabilities for
random simplicial $d$-complexes by Linial et
al.~\cite{ALiLuMe}. %We note however, that the definition of
%cycles depends on the field of choice (unlike in the $1$-dim case
%where for evry field these notions coincide).

It this paper we completely resolve the two-dimensional case, and
(constructively) show that the size of a largest simple $2$-cycle is $r(n,2)$ when
$n\equiv 1, 2 \, ({\rm mod} \, 4)$, and 
$r(n,2)+1$ when $n\equiv 0, 3 \, ({\rm mod}\, 4)$. Hence, 
Hamiltonian 2-cycles exist for the latter case.  In dimension 3 we construct
Hamiltonian simple $3$-cycles, that is of size $r(n,3)+1$, for an
infinite sequence of $n$'s and in general, 
we construct simple $d$-dimensional cycles of size
$(1- O(1/n^3))\cdot r(n,d)$.

Observe that any nontrivial simple $d$-cycle $Z$ can be represented
in a form $Z=\sigma_d - F^{(d)}$, where $\sigma_d \in Z$ is a $d$-simplex,
and $F^{(d)}$ is an acyclic $d$-filling of $\partial_d \sigma_d$. Thus,
constructing large simple $d$-cycles is equivalent to constructing
large acyclic $d$-fillings of $\partial_d\sigma_d$. It is natural to
generalize this question to what is the maximum possible size of
an acyclic $d$-filling $F^{(d)}$ of a (any) given nontrivial $(d-1)$-cycle
$Z$, with respect to set of vertices $V$, $|V|=n$. The rank
argument immediately implies that $|F^{(d)}| \leq r(n,d)$.  
  For $d=2$ we completely resolve the case and for $d>2$ we construct an acyclic $d$-filling of size
$(1- O(1/n^3))\cdot r(n,d)$ for any nontrivial $(d-1)$-cycle
$Z$.

We end with a remark that while the basic definition of boundary was
defined above with respect to $\F_2$, all notions and results extend also to
boundaries with respect to $\Q$, or any other field. 
\ignore{
Another related extremal problem studied in this paper is the following:
 Consider first the one-dimensional case. %, i.e., a spanning
 % tree $T$ of a complete graph $K_n$.
 The fundamental cycle of an edge $e$ with respect to a spanning tree
 $T$ of a complete graph $K_n,$ is the (unique) cycle created by
 adding $e$ to $T$. Given an edge $e$, we know that there exist $T$'s
 for which the fundamental cycle of $e$ w.r.t. $T$ is of size
 $r(n,1)+1=n$ (which is maximim possible). These are precisely the
 Hamiltonian paths. What about $T$'s whose {\em average} fundamental
 cycle (with respect to the random uniform distribution of edges) is
 as large as possible? A moment's reflection shows that those are
 again the Hamiltonian paths, with the average size of a fundamental
 cycle $\approx {1 \over 3} n$.

The above notions and  the problem itself, are easily lifted to higher dimensions.
The question becomes: How large can be the average fundamental $d$-cycle of a spanning $d$-hypertree on 
a vertex set $V$, $|V|=n$, where the averaging is done over all $d$-simplices $\sigma$ on $V$?
We show that this quantity is at least $c_d\cdot r(n,d)$, where $c_d
\approx 4^{-d}$.
}%ignore
 
Finally, a note about the methods: The paper is combinatorial in
nature. Its use of Homology theory does not go beyond the basic definitions, and the
basic properties of the resulting structures. This is partially due to a systematic use of a very special type of acyclic sets of 
$d$-simplices, and the $d$-chains supported on them. 
Such sets, defined in a purely combinatorial manner by means of a certain conical extension (see Claim~\ref{cl:cone_extension} below), are quite tractable by combinatorial means, and may prove useful for future studies.  
\subsection{Terminology and Preliminaries Pertaining to Simplicial Complexes}
\label{sectionSimplicialComplexes}
\subsubsection{Basic Standard Notations}
%\paragraph{Some standard notation}
% We use $\F_2$ to denote the two element finite field and $\Q$ to denote the field of rational numbers. 
The notation $[n]$ is a shorthand for the set $\{1, \ldots, n\}$. 
If $A$ and $B$ are sets, then $A \oplus B$ denotes their symmetric difference; if $A$ and $B$ are vectors over $\F_2$, then 
it denotes their vector sum. 

{\bf simplices and Complexes.} An abstract {\em $d$-dimensional
  simplex} (or {\em $d$-simplex} for short) can be identified with a
set of size  $d+1$.  An abstract {\em simplicial complex} $X$ is
a collection of simplices that is closed under containment. In this
case, the simplices in $X$ are also called {\em faces}.
%That is, if $A \in X$ then every subset of $A$ also belongs to $X$. 
The set of all the $0$-simplices in $X$ is called the {\em vertex-set} $V(X)$ of $X$. In this paper we shall always assume 
that $V(X)$ is finite and often identify it with $[n]$, where $n = |V(X)|$. 
The {\em dimension} of a simplex is the size of its vertex set minus $1$. 
The {\em dimension} of a simplicial complex $X$ is the maximum dimension of a simplex in $X$.
 Further, $X$ is called {\em pure} if all its maximal faces are of the same dimension.

The set of all $i$-dimensional simplices of $X$, the {\em $i$-skeleton} of $X$, is denoted by $X^{(i)}$. 

The {\em complete} $d$-dimensional simplicial complex
on $[n]$,  $K^d_n = \{ \sigma \subset [n] : |\sigma| \leq d+1\}$, contains all the simplices on $[n]$ of dimension $\leq d$.

% The {\em degree} of an $k$-face $\sigma$ in a simplicial complex
% $X$, denoted $deg(\sigma,X)$ is the number of $(k+1)$-faces in $X$
% which contain $\sigma$. The {\em maximum (resp., minimum) degree} of
% a pure $d$-dimensional complex $X$ is the largest (resp., smallest)
% degree among all its $(d-1)$-faces.
The {\em degree} of a $k$-face $\sigma$ in a {\em pure} $d$-dim simplicial complex $X$,
denoted $deg(\sigma,X)$, is the number of $d$-faces in $X$ which
contain $\sigma$.

{\bf Orientations, Chains, and the Boundary Operator.}
An {\em orientation} of a simplex is the equivalence relation on all
the permutations on $V(\sigma)$, that is - orderings of the vertices,
in which two permutations are equivalent if one being an even
permutation of the other. Hence, there are two possible orientations of a $d$-simplex of dimension $\geq 2$, and one orientation for $d<2$. An {\em oriented simplex} is a simplex with orientation. 
An oriented simplicial complex is a simplicial complex whose simplices are oriented. 

Given a field $\F$ and an oriented simplicial complex $X$, an $\F$-weighted formal sum $C$ of the 
(oriented) $k$-faces of $X$ is called a {\em $k$-chain} on $X$ over
$\F$, i.e., $C = \sum_{\sigma \in X^{(k)}}{c_{\sigma} \sigma}$, where
$c_{\sigma} \in \F$. All different orderings of a $d$-simplex are
divided to two equivalent classes, represented by the $\{-1,+1\}$
signs. Over $\F_2$ the notion of a sign is vacuous. The importance of
the signs is when considering the boundary operator, to be discussed below.

The support $\supp(C)$ of $k$-chain $C$ is the set of non-oriented
$k$-simplices $\sigma$ such that $c_\sigma \neq 0$. The {\em size} of
$C$ is defined as $|C|=|\supp(C)|$.  The collection of all $d$-chains
on $K^d_n$ form a vector space $\C_d$ of dimension ${n \choose d+1}$.
The vertex set of a chain $C$ over $K_n^d$ is $V(C) = V(\supp(C))$.

The {\em boundary} $\boundary_d \sigma$ of an oriented $d$-simplex $\sigma = \{v_0, \ldots, v_d\}$, with $v_0 < \cdots < v_d$, is the $(d-1)$-chain $\sum_{i=0}^d (-1)^i \sigma_i$,
where $\sigma_i = (\sigma \setminus \{v_i\})$ is the oriented simplex obtained by erasing $v_i$ from the oriented $\sigma$ as above. The boundary operator is well defined in the sense that it does not depend on the particular orderings (up to corresponding equivalences) chosen to represent 
$\sigma$ and $\sigma_i$'s respectively. Note that $\tau =\sigma - \{v
\}$ has a sign above depending on the relative order of $v$ in
$\sigma$. We denote this sign by $[\sigma: \tau]$. Hence $\partial
\sigma = \sum_{v \in \sigma} [\sigma:(\sigma\setminus \{v\}] \cdot \sigma$.  
%$\sum_{i=0}^d [\sigma:(\sigma \setminus \{v_i\})]\cdot (\sigma \setminus \{v_i\})$ where  
%$[\sigma:(\sigma \setminus \{v_i\})] = (-1)^i$. 
The linear extension of this operator to the whole of $\C_d$ 
is the {\em boundary operator} $\boundary_d : \C_d \into \C_{d-1}$. 
A fundamental property of the boundary operator is  $\boundary_{d-1} \boundary_d = 0$. 

When the value of $d$ is unambiguous from the context, the subscript $d$ of $\partial_d$ may be dropped. 

{\bf Cycles.} A $d$-chain $Z$ is called a {\em $d$-cycle} if
$\boundary_d Z = 0$. We refer to $0 \in \C_d$  as the {\em trivial
  $d$-cycle} or the {\em zero cycle}. Further, when $Z$ is the only nontrivial $d$-cycle supported on a $\supp(Z)$, $Z$ is called {\em simple}. 
The collection of all $d$-cycles of $K^d_n$ form a vector space ${\cal
  Z}_d$ of dimension ${{n-1} \choose d+1}$ over $\F$. Note that for
$(d+1)$-simplex $\sigma,$ $\partial \sigma$ is a non-trivial $d$-cycle. This
is the non-trivial cycle of minimum possible size (for any dimension). It
can be verified that the space of $d$-cycles ${\cal
  Z}_d$  is spanned by
$\{\partial_{d+1} \sigma: ~ \sigma \in K_n^{d+1} \}$.

{\bf Forests and Hypertrees.}
A pure $d$-complex $F$ is called {\em acyclic} if there no nontrivial $d$-cycle
whose support is a subset of $\supp(F)$. Slightly deviating from the standard notation, we shall
call such set of $d$-simplices $F$ a {\em $d$-forest}, and, further, call it a {\em $d$-hypertree} on $[n]$ if it is a maximal $d$-forest in $K^d_n$. Matroid-theoretic considerations immediately imply
that all $d$-hypertrees on $[n]$ have the same size. Consider the $d$-star in $K^d_n$, i.e., the set
of all $d$-simplices that contain a fixed vertex $v$. One can easy verify that it is a maximal forest, i.e.,
a $d$-hypertree. Hence, the size of any $d$-hypertree of $K^d_n$ is equal to the size of $d$-star,
being ${n-1 \choose d}$. 

The set of all $(d-1)$-chains $\{\boundary_d \sigma : \sigma \in F\} \subset {\cal Z}_{d-1}$
 is linearly independent when $F$ is a $d$-forest, and,
 moreover, it is a basis of ${\cal Z}_{d-1}$ when
 $F$ is a $d$-hypertree. This is the {\em spanning property} of $d$-hypertrees. In particular, for such
 $F$ every 
$d$-simplex $\sigma \in K^d_n \setminus \supp(F)$ defines the {\em fundamental $d$-cycle of $\sigma$ with respect to $T$}, 
being the support of the unique non-trivial $d$-cycle supported on the $F \cup \{\sigma\}$. 

\ignore{
{\bf Collapsibility.}
A $(d-1)$-face $\tau$ of a pure $d$-dimensional simplicial complex $X$
is called {\em exposed} if its degree is $1$, that is, it belongs to
exactly one $d$-face $\sigma$ of $X$. An elementary $d$-collapse on
$\tau$ consists of the removal of $\sigma$ and $\tau$ from $X$. The
complex $X$ is {\em collapsible to its $(d-1)$-skeleton} if every
$d$-face of $X$ can be removed by a sequence of elementary collapses
of $(d-1)$-facets. It is easy to see that such $X$ is acyclic over any
field, i.e., $X^{(d)}$ is always a $d$-forest. For $d=1$ (that is, $X$
is a graph), being a forest is equivalent to be collapsible. In terms
of graph theory, this is equivalent to the fact that every non-trivial
forest
contains a leaf (a degree one vertex). For higher dimension this is
not the case. 
}%ignore

% The existence of non-collapsible trees (over $\F_2$ and over $\Q$) was
% known to exist cf. \cite{}. A consequence of our results is a
% construction of non-collapsable $d$-trees for $d=2,3$. In fact the trees
% that we construct do not have any exposed $d-1$ simplex. The way to
% construct such trees, is to construct a Hamiltonian cycle $Z$, namely in
% which no exposed $(d-1)$-simplex exists. Further, to observe that for some
% $d$-simplex $\sigma$ in it, any $\tau \in \partial \sigma$ appears
% with multiplicity at least $4$. Hence, removing $\sigma$ from $Z$ will
% result a tree in which there is no exposed simplex. 

{\bf Hypercuts.} $d$-hypercuts of $K_n^d$ are its $d$-cocycles
(equivalently, $d$-coboundaries) of a minimal support. To avoid the
unnecessary discussion of $d$-cochains and $d$-cocycles, for the needs
of this paper it suffices to say that the supports $S$ of
$d$-hypercuts are precisely the sets of $d$-simplices obtainable in
the following manner. Start with any  $d$-hypertree $T$ of $K_n^d$ and
$\sigma \in T$. Then, set $S$ to be the set of all $d$-simplices
$\tau$ such that $T\setminus \{\sigma\} \cup \{\tau\}$ is acyclic. 
See~\cite{NR, LNPR} for more details
on $d$-hypercuts.  %{\tin{changed the above}} 

Finally, we note that over $\F_2$, $d$-chains (that is, cycles in this
context) can be identified with their support.%,  as the
%orientations and the coefficients are not meaningful . 

\subsubsection{Less Common Notions, Operators and Facts}\label{sec:less_common}
%$\mbox{}$~~~~%
%\paragraph
{\bf Star  and  \bf Link.} While these operators are usually considered in the context 
of simplicial complexes, they are well defined for chains as well.
Given a $d$-simplex $\sigma$ and a vertex $v$ the {\em star} of $\sigma$ with respect to $v$ is 
$\Star(v,\sigma) = 0$ if $v \notin \sigma$ and $\sigma$
otherwise. Similarly $\Link(v,\sigma) = [\sigma:(\sigma \setminus\{v\})] \cdot
(\sigma \setminus \{v\})$.  Both operation are extended linearly to
chains.

Note that $\Link(v,\sigma) = \partial \sigma - \Star(v, \partial
\sigma)$. 

It follows immediately that  a link of a $d$-cycle $Z^d$ is a
$(d-1)$-cycle over $V \setminus \{v\}$  since this is immediate for
the cycle $\partial \sigma$ for any $(d+1)$-simplex $\sigma$, and as
commented above these cycles span the space of cycles.
 
%\paragraph
\noindent
{\bf Cone.} The {\em cone} operator is the right inverse of the link operator;
For $x \notin \sigma$ it maps a $d$-simplex $\sigma$ to the
$(d+1)$-simplex $\Cone(x,\sigma) = [(\sigma \cup \{v\})~ : \sigma] \cdot (\sigma \cup
\{v\})$. Again, this is linearly extended to any chain $C$ where $x
\notin V(C)$.

A simple verification yields:
\begin{equation}
\label{eqnConeLink}
	 \Cone(x, \Link(x,C)) ~=~ \Star(x,C)\,.
\end{equation}
and 
\begin{equation}
\label{eqnBoundaryLift}
	 \partial_{d+1}\Cone(x,C) ~=~ C - \Cone(x,\partial_d C)\,. 
\end{equation}

The following fact about conic extensions is fundamental for this paper. Observe that (with some abuse of notation) the $\Cone(x,S)$ operator is well defined not only for $d$-chains, but also for non-oriented unweighted sets of $d$-simplices.
\begin{claim}
\label{cl:cone_extension}
Assume that $T^{(d)}$ and $T^{(d-1)}$ are, respectively, a $d$-forest and a $(d-1)$-forest (a $d$-hypertree and a $(d-1)$-hypertree) over a field $\F$ and a vertex set $V$. Then, for $x \not\in V$, $T^{(d)} \cup \Cone(x,T^{(d-1)})$ is a $d$-forest (a $d$-hypertree) over $V \cup \{x\}$.
\end{claim}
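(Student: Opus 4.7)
The plan is to prove acyclicity by splitting any alleged cycle into an $x$-free part and a part lying in the star of $x$, then pushing information back through the link/cone identities until both parts must vanish. After that, a straightforward rank count upgrades the forest statement to the hypertree statement.

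Concretely, let $Z$ be a $d$-cycle with $\supp(Z) \subseteq T^{(d)} \cup \Cone(x, T^{(d-1)})$. Since no simplex of $T^{(d)}$ contains $x$ and every simplex of $\Cone(x, T^{(d-1)})$ does, I can uniquely decompose $Z = A + B$ where $\supp(A) \subseteq T^{(d)}$ and $\supp(B) \subseteq \Cone(x, T^{(d-1)})$. The chain $B$ lies entirely in the star of $x$, so by (\ref{eqnConeLink}) we may write $B = \Cone(x, C)$ for the $(d-1)$-chain $C = \Link(x, B)$, which is supported on $T^{(d-1)}$.

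Applying the boundary lift identity (\ref{eqnBoundaryLift}) to $B$ gives $\partial B = C - \Cone(x, \partial C)$. Combined with $\partial Z = 0$, this yields
\[
	\partial A \;=\; -\partial B \;=\; -C + \Cone(x,\partial C).
\]
Now I split this equality according to whether the underlying $(d-1)$-simplex contains $x$. Since $A$ contains no simplex using $x$, neither does $\partial A$; hence the $x$-containing part of the right-hand side must vanish, i.e.\ $\Cone(x,\partial C)=0$, which forces $\partial C = 0$. Thus $C$ is a $(d-1)$-cycle supported on the $(d-1)$-forest $T^{(d-1)}$, so $C = 0$, whence $B = 0$. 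The $x$-free part of the identity then reads $\partial A = 0$, making $A$ a $d$-cycle supported on the $d$-forest $T^{(d)}$, so $A = 0$ as well. This shows $Z = 0$, establishing the forest claim.

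For the hypertree case, observe that $T^{(d)}$ and $\Cone(x, T^{(d-1)})$ are disjoint (again by $x$-containment), so
\[
	|T^{(d)} \cup \Cone(x, T^{(d-1)})| \;=\; \binom{|V|-1}{d} + \binom{|V|-1}{d-1} \;=\; \binom{|V|}{d},
\]
which is exactly the size of a $d$-hypertree on $V \cup \{x\}$. Since a $d$-forest of maximum possible size is a $d$-hypertree, the combined complex is a $d$-hypertree on $V\cup\{x\}$. The main conceptual step is recognizing the clean $x$-in/$x$-out decomposition; once that is set up, the boundary identity (\ref{eqnBoundaryLift}) does essentially all the algebraic work, and no subtle obstacle remains.
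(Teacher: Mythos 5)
Your proof is correct and follows essentially the same route as the paper: the chain $C=\Link(x,B)=\Link(x,Z)$ you extract is exactly the link the paper considers, shown to be a $(d-1)$-cycle supported on the acyclic $T^{(d-1)}$ and hence zero, after which the $x$-free remainder is a cycle on $T^{(d)}$ and vanishes too, with the identical binomial count upgrading forests to hypertrees. The only difference is cosmetic: you rederive the fact that the link of a cycle is a cycle directly from identity (\ref{eqnBoundaryLift}) and the $x$-in/$x$-out split, whereas the paper cites its earlier observation to the same effect.
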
 
\begin{proof} Since $T^{(d)}$ is acyclic and disjoint from $\Cone(x,T^{(d-1)})$, 
any nontrivial $d$-cycle $Z$ supported on $T^{(d)} \cup \Cone(x,T^{(d-1)})$ must contain the
the vertex $x$. Consider $\Link(x,Z)$. On one hand it is a nontrivial $(d-1)$-cycle on $V$.
On the other hand, it is supported on the acyclic $T^{(d-1)}$:  Contradiction.

Further, set $|V| = n$. If  $T^{(d)}$ and $T^{(d-1)}$ are {\em hypertrees} over $V$, they
have support of size ${{n-1} \choose d}, ~ {{n-1} \choose {d-1}}$
respectively. Then   $T^{(d)} \cup \Cone(x,T^{(d-1)}) $ has support of size   ${{n-1} \choose d} +
{{n-1} \choose {d-1}} = {n \choose d}$, and therefore a $d$-hypertree
over $V \cup \{x\}$.
\end{proof}
\noindent
{\bf A matter of notations}\\
{\em In what follows we often use a superscript $d$ over a chain or a
simplicial complex. The superscript denotes the
    maximal dimension of the corresponding (usually pure) object. 
$Z$ will always denote a cycle, $F$ or $T$ will denote acyclic chains
or sets (that is, forests). Hence e.g., $Z^d$ is a $d$-cycle.}

\vspace{0.7cm}
\noindent
{\bf Fillings.}
 A {\em filling} of a  $(d-1)$-cycle\footnote{Formally, fillings should be defined for  $(d-1)$-boundaries 
rather than for $(d-1)$-cycles. However, for $K_n^d$, as well as for any homologically $d$-connected complex, 
the two are the same.} $Z^{d-1}$ over $K_n^d$ is a $d$-chain $F^{(d)}$ over $K_n^d$ such that 
$\partial F^{(d)} = Z^{d-1}$. A filling $F^{(d)}$ (and in general, any
$d$-chain) will be called {\em acyclic}  if its support is
acyclic. The fact that  $F$ is a filling of $Z^{d-1}$ will be denoted
as $F = \Fill(Z^{d-1})$.

The {\em deficit} of an acyclic chain $F^{(d)}$ will be defined as
$\deficit  (F_n^d) = {{n-1} \choose d} - |F^{(d)}|$.
Since ${{n-1} \choose d}$ is the size of every maximal  acyclic $d$-chain in $K_n^d$, the deficit
is never negative. 

Let $T\subseteq K_n^d$ be a $d$-hypertree. For every $(d-1)$-cycle
$Z^{d-1}$ on $K_n^d$ there exists a unique acyclic filling of
$Z^{d-1}$ supported on $T$. This immediately follows from the spanning
property and the acyclicity of $T$. In fact, this is a linear
bijection between ${\cal Z}_{d-1}$, the set of $(d-1)$-cycles of
$K_n^d$, and ${\cal C}_d(T)$, the set of $d$-chains supported on a
$T$. 

%$\mbox{}$~~~~%
%\paragraph
{\bf $0$-deficit  fillings, Hamiltonicity and cycles.}
A a $0$-deficit acyclic filling $F^{(d)}$ of $Z^{d-1}$ in $K_n^d$ is obviously the
largest possible filling (in terms of its support). If $Z^{d-1}
= \partial \sigma$ for some $\sigma \in K_n^d$,  a $0$-deficit acyclic
filling $F^{(d)}$ of $\partial \sigma$ will be called {\em Hamiltonian}
as $F - \sigma$ is a {\em simple} cycle of the maximum possible support, namely
${{n-1} \choose d} +1$.
%A family of $d$-fillings ${\cal F} = \{F_n^d\}$ will be called {\em sub-Hamiltonian} if the 
%the deficit of any $F_n^d$ in $o(n^d)$.
In turn,  a simple $d$-cycle $Z^d$ in $K_n^d$ will be called Hamiltonian if its size is
${{n-1} \choose d}+1$. Observe that $Z^d$ is Hamiltonian if and only if for any term $c_\sigma \sigma$
in it (where $\sigma$ is a $d$-simplex), $Z^d -  c_\sigma \sigma$ is an acyclic $0$-deficit filling
of $\partial \sigma$. 

While for graphs Hamiltonian cycles always exist (for any $n\geq 3$), this is not necessarily true for higher dimensional
full-simplicial complexes. 

%%%%%%%%%%%%%%%%%%%%%%%%%%%%%%%%%%%%%%%%
\section{Large Acyclic $d$-Dimensional Fillings}
\label{sectionFillings}
Can one expect that every $(d-1)$-cycle $Z^{d-1}$ on $K_n^d$ has a
$0$-deficit filling? In particular, is there a Hamiltonian
$d$-cycle for every $d$ for large enough $n$?    
The answer may depend on the underlying field. For $\F_2$ there 
 is an obvious obstacle for fillings of
$(d-1)$-cycles, for even $d$.
Observe that in this case $\partial_{d} F^{(d)} = Z^{d-1}$ implies 
that the sum of coefficients (mod 2) of the chain $F^{(d)}$ is equal to that of $Z^{d-1}$. In other words,
the parities of $|F^{(d)}|$ and  $|Z^{d-1}|$ must be equal. We call this
obstacle 'the parity condition', and it is defined formally below. 

Thus if $Z^{d-1}$ has a $0$-deficit filling 
the following parity condition holds.
\begin{definition}[parity condition]
We say that a non-empty $(d-1)$-cycle over $\F_2$ has the parity
condition if $d$ is even and 
\begin{equation}\label{eq:1}
|Z^{d-1}| \equiv {{n-1} \choose d}~({\rm mod}\;2)  
\end{equation}
\end{definition}

%\subsection{Fillings over $\Q$}\label{sec:Q}

For all we presently know, the following rather strong conjecture may well be true:
\begin{conjecture}
\label{conj:main}
Over $\F_2$, for every $d\geq 0$ there exists a number $n_d$, such
that every non-trivial $(d-1)$-cycle $Z^{d-1}$ on $K_n^d$ with $n \geq
n_d$ has a $0$-deficit filling if an only if the parity condition
holds. 
More over, for any non-trivial $(d-1)$-cycle, regardless of the parity
condition there is an acyclic filling $F^{(d)}$ of $Z^{d-1}$ of deficit $1$.

Over $\Q$, for significantly large $n$, $Z^{d-1}$  always has a
$0$-deficit filling on $K_n^d$. 
\end{conjecture}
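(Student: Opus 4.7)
The plan is a nested induction, outer on the dimension $d$ and inner on the number of vertices $n$, driven by the conical extension (Claim~\ref{cl:cone_extension}). Given a nontrivial $(d-1)$-cycle $Z^{d-1}$ on $V=[n]$, pick a vertex $v\in V(Z^{d-1})$ and set $L=\Link(v,Z^{d-1})$ and $Z'=Z^{d-1}-\Star(v,Z^{d-1})$, a $(d-2)$-cycle and a $(d-1)$-chain on $V\setminus\{v\}$. Combining equations~(\ref{eqnConeLink}) and~(\ref{eqnBoundaryLift}) shows that whenever $G$ is a $(d-1)$-chain on $V\setminus\{v\}$ with $\partial G=-L$ and $F_2$ is a $d$-chain on $V\setminus\{v\}$ with $\partial F_2=Z'-G$, the chain $F=\Cone(v,G)+F_2$ satisfies $\partial F=Z^{d-1}$. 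If moreover $G$ is a $(d-1)$-hypertree and $F_2$ a $d$-hypertree on $V\setminus\{v\}$, then by Claim~\ref{cl:cone_extension} the chain $F$ is itself a $d$-hypertree on $V$, hence a $0$-deficit acyclic filling of $Z^{d-1}$.

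This reduces the problem to two sub-problems on $n-1$ vertices: (A) a $0$-deficit filling of $L$ in $K_{n-1}^{d-1}$, and (B) a $0$-deficit filling of $Z'-G$ in $K_{n-1}^{d}$. Over $\Q$, (A) is supplied by the outer hypothesis in dimension $d-1$ and (B) by the inner hypothesis at $(n-1,d)$. Over $\F_2$ one must track parities: using $|Z'|=|Z^{d-1}|-|L|$, the identity $|G|\equiv\binom{n-2}{d-1}\pmod 2$ for any $(d-1)$-hypertree $G$, and the fact that a $(d-1)$-cycle has even size whenever $d$ is odd, one verifies that the parity condition on $Z^{d-1}$ propagates to the required parity on $Z'-G$, so that sub-problem~(B) is solvable by the inner hypothesis. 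For sub-problem~(A), when $L$ fails the parity required in dimension $d-1$, one either re-selects $v$ (showing by a double-counting argument that some admissible $v$ exists for large $n$) or shifts $Z^{d-1}$ by $\partial\sigma$ for an auxiliary $d$-simplex $\sigma$ and tracks the adjustment at the end. The deficit-$1$ clause is handled in the same spirit: when the original parity of $Z^{d-1}$ itself fails, apply the proved $0$-deficit case to $Z^{d-1}+\partial\sigma$ and retain $\sigma$ as the single-simplex surplus.

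The main obstacle is the inner base case. For each $d\geq 3$ the induction needs a threshold $n_d$ together with a direct $0$-deficit construction for every parity-consistent $(d-1)$-cycle on $n_d$ vertices, and the only such constructions available in the paper are for $d\leq 2$. My tentative route is probabilistic: sample $G$ uniformly among the $(d-1)$-hypertree fillings of $L$ (the outer hypothesis providing many such fillings) and show that with positive probability $Z'-G$ lies in a family for which a $0$-deficit filling has been built explicitly in Section~\ref{sectionFillings}. Two further pitfalls must be addressed. First, sub-problem~(B) stays in dimension $d$, so the inner recursion is carried entirely by the cone; the degenerate event $Z'-G=0$ would force $F_2=0$ and hence a nonzero deficit, and must be repaired by perturbing $G$ along a single basis element of the $(d-1)$-hypertree matroid. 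Second, ensuring both the nontriviality of $L$ and, when relevant, the correct parity of $|L|$ constrains the choice of $v$, which in turn forces the ``significantly large $n$'' hypothesis of the conjecture and is, I suspect, the reason the conjecture has so far resisted a uniform proof.
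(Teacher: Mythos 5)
You are attempting to prove a statement that the paper itself poses as an open conjecture: the paper establishes Conjecture~\ref{conj:main} only for $d\leq 2$ (Theorems~\ref{thm:d2_2} and~\ref{theoremQtrees}), obtains deficit $O(n^{d-3})$ in general (Theorems~\ref{th:d>3} and~\ref{thm:d>3_f2_strong}), and for $d=3$ over $\F_2$ gets deficit at most $1$ in general and $0$-deficit only for \emph{friendly} cycles (Theorems~\ref{thm:d=3_f2-x} and~\ref{thm:d=3_f2}); the concluding remarks explicitly list the conjecture as open. Your reduction -- pick a pivot $v$, fill $\Link(v,Z^{d-1})$ in dimension $d-1$, correct by the cone and recurse in dimension $d$ on $n-1$ vertices, with acyclicity and the deficit bookkeeping coming from Claim~\ref{cl:cone_extension} -- is exactly the paper's procedure $\FILL$ together with Lemma~\ref{lem:082}. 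So the skeleton is sound, but it is not where the difficulty lies, and the places you flag as "to be addressed" are precisely the points at which the paper itself gets stuck.

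Concretely: (i) Sub-problem (A) is a filling of a $(d-2)$-cycle in dimension $d-1$, so for odd $d$ it carries its own parity obstruction, and for some cycles \emph{no} choice of pivot makes the link parity-consistent (e.g.\ $\partial\sigma$ for a $3$-simplex is not friendly), so "re-select $v$ by double counting" cannot work uniformly; handling exactly this required the friendly-cycle machinery, Lemma~\ref{lemma:friendly} and the appendix case analysis just for $d=3$, and no generalization to higher $d$ is known (this is the content of the weaker Conjecture~\ref{conj:f2}). (ii) The inner base case for each $d\geq 3$ is genuinely missing: Claims~\ref{cl:nq} and~\ref{cl:n0} give only deficit at most $d$ at $n=d+2$, Section~\ref{sectionFillings} provides explicit $0$-deficit families only for $d\leq 2$, and your probabilistic sampling of $G$ is a plan, not an argument -- you neither specify the target family in dimension $d$ nor prove a positive-probability statement. (iii) The deficit-$1$ clause via shifting by $\partial\sigma$ has a gap: a $0$-deficit acyclic filling $F$ of $Z^{d-1}+\partial\sigma$ yields an acyclic deficit-$1$ filling $F-\sigma$ of $Z^{d-1}$ only when $\sigma\in\supp(F)$; if $\sigma\notin\supp(F)$ then $F+\sigma$ exceeds the maximal acyclic size and must contain a cycle, so you would need an extra argument that $F$ can be forced through $\sigma$. (Your fix for the degenerate event $Z'-G=0$, perturbing among several hypertree fillings, is in the spirit of Claim~\ref{cl:many} and is fine.) As written, the proposal is a restatement of the paper's recursion plus an honest list of the open obstacles, not a proof of the conjecture.
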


In what follows we shall establish this conjecture for $d \leq 2$
(over $\F_2$ and over $\Q$). 

\begin{theorem}
  \label{thm:d2}
Over $\F_2$, every nonzero $1$-cycle $Z^1$ on $K_n^2$ has an acyclic
filling of deficit at most $1$. Further, if   the parity condition
holds it has a $0$-deficit acyclic filling. 

Over $\Q$, every nonzero $1$-cycle $Z^1$   has a
$0$-deficit acyclic filling on $K_n^2$ for   large enough $n$.
\end{theorem}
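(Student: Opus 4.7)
Over $\F_2$, the deficit-$0$ statement is equivalent to producing a $2$-hypertree $T\subseteq K_n^2$ with $\partial T=Z^1$, since such a $T$ is itself an acyclic filling of size $\binom{n-1}{2}$. I plan an induction on $n$ driven by the conic extension of Claim~\ref{cl:cone_extension}. Given a nonzero $Z^1$ satisfying the parity condition, pick $v\in V(Z^1)$, let $S\subseteq V\setminus\{v\}$ be the neighborhood of $v$ in $Z^1$ (so $|S|$ is positive and even), and split $Z^1=Z_v+Z_{\bar v}$ into edges at $v$ and edges avoiding $v$. The candidate tree is $T=T^{(2)}\cup\Cone(v,T^{(1)})$, where $T^{(1)}$ is a spanning tree of $K_{V\setminus\{v\}}$ whose odd-degree set equals $S$ (a standard $T$-join type construction supplies such a tree for every even nonempty $S$), and $T^{(2)}$ is a $2$-hypertree on $V\setminus\{v\}$ with $\partial T^{(2)}=Z_{\bar v}+T^{(1)}$, furnished by the inductive hypothesis on $n-1$. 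Equation~(\ref{eqnBoundaryLift}) then gives $\partial T=\partial T^{(2)}+T^{(1)}+\Cone(v,S)=(Z_{\bar v}+T^{(1)})+T^{(1)}+Z_v=Z^1$, and Claim~\ref{cl:cone_extension} certifies that $T$ is indeed a $2$-hypertree.

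To close the induction I need $Z':=Z_{\bar v}+T^{(1)}$ to be a nonzero $1$-cycle satisfying the parity condition on $n-1$ vertices: the cycle property is $\partial Z'=S+S=0$, and parity transfers through the identity $\binom{n-1}{2}=\binom{n-2}{2}+(n-2)$ combined with $|T^{(1)}|=n-2$; non-vanishing of $Z'$ is arranged by exploiting the freedom in the choice of $T^{(1)}$, which lives in a large family once $n$ is moderately big. A handful of small $n$ serve as base cases and are verified by direct construction. For the deficit-$1$ statement, when the parity condition fails, pick any triangle $\sigma$ and apply the above procedure to $Z^1+\partial\sigma$ (which does satisfy the parity condition); by choosing $v\in\sigma$ and forcing the opposite edge of $\sigma$ into $T^{(1)}$ one secures $\sigma\in T$, so that $T\setminus\{\sigma\}$ is the required acyclic filling of deficit exactly $1$.

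The $\Q$-version follows the same architecture, with signed coefficients and no parity obstruction. The task becomes to find a $2$-hypertree $T$ on which the unique chain $F$ with $\partial F=Z^1$ has full support. Writing $F=F^{(2)}+\Cone(v,G)$ for $G$ supported on a spanning tree $T^{(1)}$ of $V\setminus\{v\}$, matching boundaries reduces the problem to a one-dimensional sub-lemma: for every nonzero $0$-boundary $b$ on $V\setminus\{v\}$, exhibit $T^{(1)}$ on which the unique $G$ with $\partial G=b$ has all coefficients nonzero. I would produce such a $T^{(1)}$ as a Hamilton path $v_1,\dots,v_{n-1}$ built greedily so that the prefix sums $s_i:=\sum_{j\le i}b(v_j)$ satisfy $s_i\neq 0$ for $i<n-1$; the greedy step cannot get stuck because the only way it could is for all remaining vertices to share the same $b$-value, which contradicts $\sum b=0$ unless a single vertex remains (at which point $s=0$ by design). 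With this sub-lemma the induction carries through over $\Q$ for all sufficiently large $n$. The main obstacle I anticipate throughout is the inductive bookkeeping---keeping $Z'$ nontrivial and (over $\F_2$) parity-compatible at every recursive step---which I expect to resolve by the slack in the choice of $T^{(1)}$ once $n$ is beyond a small threshold, complemented by direct analysis of the few small base cases.
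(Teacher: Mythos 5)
Your plan is essentially the paper's own argument: the cone/link decomposition at a pivot vertex is exactly the recursive procedure $\FILL$, acyclicity comes from the same conic-extension Claim~\ref{cl:cone_extension}, the parity bookkeeping via $\binom{n-1}{2}\equiv\binom{n-2}{2}+(n-2)\pmod 2$ is the computation in the paper's proof of Theorem~\ref{thm:d2_2}, and nontriviality of the reduced cycle is secured, as in the paper, by the abundance of spanning trees with a prescribed odd-degree set (Claim~\ref{cl:many}), with small $n$ handled as base cases. The only local deviations are your treatment of the deficit-$1$ case by filling $Z^1+\partial\sigma$ and deleting the forced triangle $\sigma$ (the paper instead propagates the parity failure down the induction) and your explicit nonzero-prefix-sum Hamiltonian path over $\Q$, which is a clean concrete form of the $1$-dimensional step; both are sound, though over $\Q$ the substantive work remains, exactly as in the paper's appendix, in the small-$n$ analysis ensuring the recursion steers clear of the exceptional cycles $C_4$ ($n=4$) and $C_3$ ($n=5$).
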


For $d\geq 3$, we prove a weaker statement:
\begin{theorem}
\label{th:d>3}
Using the notations of Conjecture~\ref{conj:main}, there always exists
an acyclic filling $F^{(d)}$  of $Z^{d-1}$ (over $\F_2$ and over $\Q$) on 
$K_n^d$ of deficit $O(n^{d-3})$. In particular, for $d=3$, the deficit is constant.
\end{theorem}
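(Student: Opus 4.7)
I would induct on $(d,n)$ lexicographically, writing $D(d,n)$ for the maximum, over all nontrivial $(d-1)$-cycles $Z^{d-1}$ in $K_n^d$, of the minimum deficit of an acyclic $d$-filling of $Z^{d-1}$. The goal is to show $D(d,n)=O(n^{d-3})$ for $d\ge 3$, with the sharper bound $D(3,n)=O(1)$ serving as the base case. The main tool is a cone-based reduction relating the pair $(d,n)$ to the two strictly smaller pairs $(d,n-1)$ and $(d-1,n-1)$.

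\textbf{Cone reduction.} The first step is to prove
\begin{equation*}
D(d,n)\;\le\;D(d,n-1)\;+\;D(d-1,n-1).\qquad(\star)
\end{equation*}
Given a nontrivial $Z^{d-1}$ on $V$ with $|V|=n$, pick any $x\in V$ and set $V'=V\setminus\{x\}$. Since $\Link(x,Z^{d-1})$ is a $(d-2)$-cycle supported on $V'$ (Section~\ref{sec:less_common}), the $d$-induction yields an acyclic $(d-1)$-filling $G$ of $\Link(x,Z^{d-1})$ in $K_{n-1}^{d-1}$ of deficit at most $D(d-1,n-1)$. Combining~(\ref{eqnConeLink}) and~(\ref{eqnBoundaryLift}) gives $\partial\Cone(x,G)=G-\Star(x,Z^{d-1})$, so $W:=(Z^{d-1}-\Star(x,Z^{d-1}))+G$ is a $(d-1)$-cycle supported on $V'$ (with the appropriate sign over $\Q$). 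By the $n$-induction, $W$ admits an acyclic $d$-filling $H$ in $K_{n-1}^d$ of deficit at most $D(d,n-1)$. Setting $F:=\pm\Cone(x,G)+H$, one checks that $\partial F=Z^{d-1}$, and that $\supp(F)\subseteq\Cone(x,\supp(G))\cup\supp(H)$ is acyclic by Claim~\ref{cl:cone_extension}. Since the two summands of $F$ have disjoint supports, $|F|=|G|+|H|$, and the Pascal identity $\binom{n-2}{d}+\binom{n-2}{d-1}=\binom{n-1}{d}$ immediately yields $(\star)$.

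\textbf{Base case $d=3$ and the main induction.} Over $\Q$, Theorem~\ref{thm:d2} gives $D^{\Q}(2,k)=0$ for all sufficiently large $k$, so iterating $(\star)$ immediately produces $D^{\Q}(3,n)=O(1)$. Over $\F_2$ one only has $D^{\F_2}(2,k)\le 1$, and naive iteration of $(\star)$ yields merely $D^{\F_2}(3,n)=O(n)$. To tighten to $O(1)$, at each reduction I would try to pick $x$ satisfying $\deg(x,Z^2)\equiv\binom{n-2}{2}\pmod 2$, which makes the parity condition hold for $\Link(x,Z^2)$ and eliminates the unit deficit charge at that step. If no such $x$ is available, the identity $\sum_v\deg(v,Z^2)=3|Z^2|$ forces $|Z^2|$ into a specific parity class; one then pays a single unit of deficit and argues that the new residue cycle on $V'$ breaks this parity constraint, so such forced steps occur at most $O(1)$ times in total. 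Once $D(3,n)=O(1)$ is established, induction on $d$ via $(\star)$ yields
\begin{equation*}
D(d,n)\;\le\;D(d,n_0)\;+\;\sum_{k=n_0}^{n-1}O(k^{d-4})\;=\;O(n^{d-3})
\end{equation*}
for all $d\ge 3$.

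\textbf{Main obstacle.} The technical heart of the argument is the $\F_2$ base case $D^{\F_2}(3,n)=O(1)$. The straightforward recurrence is off by a factor of $n$; removing this factor demands careful control of the parity obstruction across the whole sequence of reductions, either through a vertex-selection bookkeeping along the lines above, or through an alternative direct construction in dimension~$3$ that sidesteps the stepwise reduction.
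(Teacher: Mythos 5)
Your reduction $(\star)$ is exactly the paper's $\FILL$ recursion (Lemma~\ref{lem:082}), the $\Q$ argument matches Section~\ref{sec:over-q}, and the high-level induction on $(d,n)$ matches Theorem~\ref{thm:d>3_f2_strong}. However, the step you flag as the "main obstacle" is indeed a genuine gap, not just a detail: the assertion that "the new residue cycle on $V'$ breaks this parity constraint" is precisely the content of Lemma~\ref{lemma:friendly}, and it does not follow automatically from the identity $\sum_v \deg(v,Z^2)=3|Z^2|$. The residue cycle $W = Z^{d-1}-\Star(x,Z^{d-1})+G$ depends on the chosen filling $G$ of $\Link(x,Z^{d-1})$, and whether $W$ is "friendly" requires actively \emph{steering} the construction of $G$ — in particular the choice of the next two pivots $v_{n-1}, v_{n-2}$ inside the recursive call $\FILL(\Link(x,Z^2))$ — so that the resulting degrees modulo $2$ separate. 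The paper establishes this via Claim~\ref{cl:friendly_internal} (an explicit formula for $\deg(u,Z_{n-1}^2)$ in terms of the pivot choices) and then a three-way case split (Claims~\ref{cl:case1}, \ref{cl:case_2.1}, \ref{cl:case_2_d=3}) depending on the structure of $\Link(x,Z^2)$. Without this, the $\F_2$ bound for $d=3$ stays at $O(n)$, and the theorem's claimed deficit $O(n^{d-3})$ degrades to $O(n^{d-2})$ over $\F_2$.

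Two smaller omissions. First, you must guarantee that the intermediate cycle $W$ is nontrivial: if $W=0$, the recursion returns the empty filling and the deficit jumps to $\binom{n-2}{d}$. The paper handles this by strengthening every statement to "there exist at least two distinct fillings" (Claims~\ref{cl:many}, \ref{cl:nq}, \ref{cl:n0}, and the "at least two" clauses in Theorems~\ref{thm:d2_2}, \ref{thm:d=3_f2}, \ref{thm:d=3_f2-x}), so one of the two choices of $G$ always makes $W\neq 0$; your induction hypothesis $D(d,n)$, defined as a minimum over a single best filling, is too weak to carry this through. Second, your final telescoping needs a concrete base case $D(d,n_0)$ for each fixed $d\ge 4$; the paper supplies this at $n_0=d+2$ via Claims~\ref{cl:nq} and \ref{cl:n0}, showing $D(d,d+2)\le d$, which you would also need to justify before the sum $\sum_k O(k^{d-4})$ dominates.
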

%
%
%\ignore{
%\begin{conjecture}
%\label{conj:mainF2}
%({\bf Over $\F_2$}):~ For every $d\geq 0$, and every nonzero $(d-1)$-cycle $Z^{d-1}$ on
%a vertex set $V$, there exists a simple $d$-dimensional filling $F_n^d$ of $Z^{d-1}$ on $V$, 
%such that ~$\cora(F_n^d) \leq c_d$  for some universal constant $c_d$ depending on $d$ alone.
%\end{conjecture}
%\begin{conjecture}
%\label{conj:mainQ}
%({\bf Over $\Q$}):~ For every $d\geq 0$, and every nonzero $(d-1)$-cycle $Z^{d-1}$ on a
%vertex set $V$, $|V|=n$, there exists a simple $d$-dimensional filling $F_n^d$ of $Z^{d-1}$ on $V$, 
%with ~$\cora(F_n^d) = 0$, as long as $n \geq a_d$ for some universal constant $a_d$ depending on $d$ alone.
%\end{conjecture}
%}
%
%

In all cases the following generic recursive construction, $\FILL()$ will be employed. 
Given a nonzero $(d-1)$-cycle $Z_n^{d-1}$ over $K_n^d$ it reduces the
problem to constructing a (large) acyclic $(d-1)$-dimensional fillings
for a certain $(d-2)$-cycle and an acyclic filling of a $(d-1)$-cycle,
but over a smaller underlying set.

\vspace{1cm}
\noindent
{\bf A matter of notations}\\
{\em{ In what follows the universe over which all simplicial complexes
    are considered is $V = [n]$. All chains in what follows are pure
    and are denoted using a
    subscript and a superscript. The superscript denotes the
    maximal dimension while the subscript denotes the size of the
    subset of the universe on which the chain is defined over. The
    the actual subset of vertices will be either clear from the
    context, or explicitly defined.
    % Thus $A_n^d$ is a $d$-dimensional
    % chain over $[n]$.
    % $Z$ will always denote a cycle, $F$ or $T$ will denote acyclic
    % chains. Hence $Z_n^d$ is a $d$-cycle over $n$.

In the recursion below we initially have our universe
$V=[n]$. However, during the recursive procedure we choose a special
vertex $v_n \in V$. This will define a re-enumeration of $V$ along
every recursion path according to this order in which the vertices are
chosen. Once $v_n$ is chosen, some next objects over $V
\setminus \{v_n\}$  are (recursively) constructed and hence
their subscripts will correspondingly be $(n-1)$.
%The
  %  procedure will define an enumeration of $V$ as $v_n , \ldots ,v_1$
  %as defined by a certain order in which some vertices are chosen.
}
\\ 

\noindent
{\bf 
$\FILL(Z_n^{d-1},V) ~ ~$ } ; the input $Z_n^{d-1}$ is a $(d-1)$-cycle over the
universe $V$.  \\

\vspace{-0.4cm}
\hspace{2.2cm} ; The result is an acyclic filling $F_n^d$
of $Z_n^{d-1}$. 

\noindent
If $Z_n^{d-1}=0$ return $0$ (the zero cycle  $\leftrightarrow$  empty filling).

\noindent
if $d=0$, and $Z_n^{-1} = c\cdot\emptyset$,  return a (suitably chosen) vertex 
$v \in V$ with coefficient $c$; \\

\noindent
if $d>0$, \\
$\mbox{}$~~~~~ pick a (suitably chosen) pivot vertex $v_n$  in $V(Z_n^{d-1})$\;; \\ 
$\mbox{}$~~~~~~~~~ $Z_{n-1}^{d-2} ~\leftarrow~ \Link(v_n, Z_n^{d-1})$\;;\\
$\mbox{}$~~~~~~~~~ $F_{n-1}^{d-1} ~\leftarrow~ \FILL(Z_{n-1}^{d-2},
V\setminus \{v_n\})$\;;\\
$\mbox{}$~~~~~~~~~ $Z_{n-1}^{d-1} ~\leftarrow~ 
              Z_n^{d-1} \; - \; \Star(v_n, Z_n^{d-1})\;+\;F_{n-1}^{d-1} $\;; \\
\noindent
$\mbox{}$~~~~ return$~ ~ F_n^d ~ ~ \leftarrow ~ ~  \FILL(Z_{n-1}^{d-1},V
\setminus \{v_n\})  -
\Cone(v_n, F_{n-1}^{d-1}) $
}
\\ \\
To make the above generic construction explicit, it remains to specify how to choose
the pivot vertices, and the choice of the retuned $v$ in the base case
of $0$-dim filling. We will prove that regardless of this choice, the
output is an acyclic filling of $Z_n^{d-1}$. A good choice of the
pivot vertex will guarantee a lagre size filling.

Before presenting a formal proof we start with the analysis of the
procedure in the case $d \leq 1$ and $\F = \F_2$, which could also be taken a base
case for the inductive proof for $\F_2$ ahead. In this case we replace
$+,-$ over $\F$ with the  mod two addition $\xor$. Note also that for any
complex $A \subseteq K_n^r$ and $v \in V$, $A - Star(v,A) = A
\setminus \{v\}  = \{\sigma \in A| ~ v \notin \sigma \}$.
Note also that $\FILL()$ has formally a parameter indicating the
underlying set 
in respect to which the filling is created, and with respect to which
the deficit is defined. In what follows we drop this parameter from
the recursive call when ever it is clear from the context.

For $d=0$, the unique $(-1)$-dim nonzero cycle is $Z_n^{-1} =
\emptyset$. In this case for any vertex $v \in V,$ the chain $1
\cdot \{v\}$ namely, the singleton $v$,  is acyclic with
boundary $\emptyset$. 

For $d=1$, a non zero $0$-cycle $Z_n^{0}$ is a non-empty even-size subset of
$V$. In this case an acyclic filling of $Z_n^0$ is a forest $F
\subset K_n^1$ whose odd degree vertices is exactly the vertices in $Z_n^0$.  The
existence of a $0$-deficit filling in this case can be proven directly
from simple combinatorial consideration. In particular for
$Z_n^0=\{u,v\}$ this is any  path in $K_n^1$ whose end
points are $u,v$. 

Still, let us analyse the procedure for $d=1$, namely for an even
size set $Z_n^0 \subseteq [n]$: let
$v = v_n\in V(Z_n^0)$ be the chosen pivot vertex. Then
$\Link(v_n, Z_n^{d-1}) = \emptyset = Z_{n-2}^{-1}$ hence $F_{n-1}^{0}=
u \in V\setminus \{v\}$.  For any such $u,  ~ Z_{n-1}^0 = Z_n^0 \xor \{u,v\}$ is an even set. Either
$Z_{n-1} = \emptyset$ (in the case $Z_n^0 = \{v,u\}$) in which case
the acyclic filling $\Cone(v_n, u) = (v,u)$ is retuned. Otherwise, if
$Z_n \neq \{u,v\}$ or $u \notin
Z_n^0$ is chosen,  $Z_{n-1}$ is a non-empty even subset of $V
\setminus v$ . In this case a forest $F_{n-1}^0$ whose odd vertices is
returned as $\FILL_{n-1}^0(Z_{n-1}^0,V \setminus \{v\})$ and $F_{n-1}^0
\cup \{(v,u)\}$ is the final answer. Note that by induction (with the
right choice of $u$ above, namely $u \neq x$ in the case $Z_n^0
= \{v,x\}$) $F_{n-1}^0$ being $0$-deficit forest is of size $n-2$  resulting in
$F_n^0$ of size $n-1$, namely being $0$-deficit.

The analysis for $\F = \Q$ is similar and will be skipped.

We end this analysis of the case $d=1$ with the following claim that
will be used later.

\begin{claim}
\label{cl:many}
For any fixing of $v_n$ in the call for $\FILL(Z_n^0,V)$, there are at least $(n-2)!$ different
(labeled) $0$-deficit $1$-fillings of (any) $Z_n^0$.  In particular,
for $n\geq 4$ there at least two different fillings.
\end{claim}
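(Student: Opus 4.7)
The plan is to proceed by induction on $n$, with trivial base case $n=2$: the only $0$-deficit filling of $Z_2^0 = \{1,2\}$ is the edge $\{1,2\}$, and $(2-2)! = 1$.

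For the inductive step, fix a pivot $v_n \in V(Z_n^0)$ and examine the procedure. Since $\Link(v_n, Z_n^0) = \emptyset$, the next recursive layer makes a $d=0$ base-case call which returns a chain $F_{n-1}^0 = 1 \cdot \{u\}$ for some $u \in V \setminus \{v_n\}$, and then further recurses on $Z_{n-1}^0 = Z_n^0 \oplus \{v_n, u\}$ over $V \setminus \{v_n\}$. The first task is to count the valid choices of $u$, i.e., those for which the output $F_n^0 = \FILL(Z_{n-1}^0, V \setminus \{v_n\}) \cup \{(v_n,u)\}$ is $0$-deficit; this requires $Z_{n-1}^0 \neq \emptyset$ so that the sub-call actually returns $n-2$ edges. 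A direct case check on $|Z_n^0|$ gives: if $|Z_n^0| \geq 4$ every $u \in V \setminus \{v_n\}$ works ($n-1$ choices), while if $|Z_n^0| = 2$ exactly one $u$ is forbidden, namely the other vertex $x \in Z_n^0$ (which would give $Z_{n-1}^0 = \emptyset$ and hence $F_n^0 = \{(v_n,x)\}$ of size $1$, a positive deficit for $n \geq 3$). So there are at least $n-2$ valid values of $u$.

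Next I would invoke the inductive hypothesis on the sub-call $\FILL(Z_{n-1}^0, V \setminus \{v_n\})$ (with whatever pivot the procedure picks at that level) to obtain at least $((n-1)-2)! = (n-3)!$ distinct $0$-deficit fillings of $Z_{n-1}^0$ over $V \setminus \{v_n\}$, for each valid $u$. Distinctness across the two kinds of choices is essentially automatic: since $F_{n-1}^1 \subseteq \binom{V \setminus \{v_n\}}{2}$, the vertex $v_n$ has degree exactly $1$ in $F_n^0$ and its unique neighbor is $u$, so fillings produced from different $u$'s differ in the edge incident to $v_n$, and fillings from the same $u$ but different inductive sub-fillings differ on $V \setminus \{v_n\}$. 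Multiplying counts yields at least $(n-2)(n-3)! = (n-2)!$ distinct fillings, and the ``$n \geq 4$ gives at least two'' conclusion is immediate.

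The only delicate point, and the one I would be most careful about, is the edge case $|Z_n^0| = 2$: there one must rule out $u = x$ and verify that every other choice of $u$ really yields a non-empty $Z_{n-1}^0$ (of size $2$) on which the recursion can proceed. Everything else is mechanical bookkeeping of what the procedure produces and an appeal to the inductive hypothesis.
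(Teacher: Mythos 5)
Your proof is correct and follows essentially the same strategy as the paper: induct on $n$, count the $n-2$ (or $n-1$) admissible choices of $u$ in the $d=0$ sub-call, note that distinct $u$'s already force distinct fillings via the edge $(v_n,u)$, and multiply by the $(n-3)!$ sub-fillings from the inductive hypothesis. The paper anchors its induction at $n=3$ rather than $n=2$, and states the distinctness and edge-case handling somewhat more tersely, but the content is the same.
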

\begin{proof}
  In the case $Z_n^0 = \{v,x\}$ there are $n-2$ choices of $u \notin
  Z_n^0$ that form a right choice of $u$ as described above. Each will
  correspond to a different final $F_n^1$ as for different $u,u'$
  $\Cone(v_n,F_{n-1}^{-1})$ contains only $(v_n,u)$ or $(v_n,u')$
  respectively. In the case $|Z_n^0| > 2,$ $u$ is unrestricted and can
  take any of the $n-1$ possible values. Hence the claim follows by
  induction and the observation that for $n=3$ there is $1$ such
  filling.

Again, the argument above is made formally for $\F_2$ but a similar
argument is done w.r.t $\Q$.  
\end{proof}

Before we prove Theorems \ref{thm:d2} and \ref{th:d>3} we first prove that for any field, the procedure returns an acyclic
filling.

\begin{lemma}
  \label{lem:082}
Let $Z_n^{d-1}$ be any non-zero cycle in $K_n^d$. Procedure
$\FILL (Z_n^d,V)$ returns an acyclic filling $F_n^d = \Fill(Z_n^{d-1})$ regardless of the
choice of $v_n$.  Further, 
 $\deficit (F_n^d) = \deficit (F_{n-1}^{d-1}) + \deficit (\FILL_{n-1}^{d-1}(Z_{n-1}^{d-1}, V \setminus \{v_n
\}))$, where $ F_{n-1}^{d-1}$ is any acyclic filling $F_{n-1}^{d-1} =
\Fill(Z_{n-1}^{d-2}, V \setminus \{v_n\})$, and $Z_{n-1}^{d-2},
Z_{n-1}^{d-1}$  are the corresponding objects
as defined  in the procedure.
\end{lemma}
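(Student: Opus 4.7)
We proceed by induction on the pair $(d,n)$ with respect to any well-ordering under which both recursive calls of $\FILL$ strictly decrease; the base cases $d=0$ and $d=1$ have been verified in the analysis preceding the statement. Under the inductive hypothesis, the inner call returns an acyclic filling $F_{n-1}^{d-1}$ of $Z_{n-1}^{d-2}$, and the outer call returns an acyclic filling $G := \FILL(Z_{n-1}^{d-1},V\setminus\{v_n\})$ of $Z_{n-1}^{d-1}$. The latter makes sense only if $Z_{n-1}^{d-1}$ actually is a $(d-1)$-cycle on $V\setminus\{v_n\}$, which is the first thing to verify.

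The support of $Z_{n-1}^{d-1} = Z_n^{d-1} - \Star(v_n,Z_n^{d-1}) + F_{n-1}^{d-1}$ plainly avoids $v_n$. To see that it is a cycle, I will use~\eqref{eqnConeLink} to write $\Star(v_n,Z_n^{d-1}) = \Cone(v_n,\Link(v_n,Z_n^{d-1})) = \Cone(v_n,Z_{n-1}^{d-2})$; then~\eqref{eqnBoundaryLift} together with the observation from Section~\ref{sec:less_common} that the link of a cycle is again a cycle yields $\partial\Star(v_n,Z_n^{d-1}) = Z_{n-1}^{d-2} - \Cone(v_n,\partial Z_{n-1}^{d-2}) = Z_{n-1}^{d-2}$. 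Since $\partial Z_n^{d-1} = 0$ and $\partial F_{n-1}^{d-1} = Z_{n-1}^{d-2}$, this gives $\partial Z_{n-1}^{d-1} = 0$.

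Now write $F_n^d = G - \Cone(v_n,F_{n-1}^{d-1})$ as in the procedure. A second application of~\eqref{eqnBoundaryLift} gives $\partial F_n^d = Z_{n-1}^{d-1} - F_{n-1}^{d-1} + \Cone(v_n,Z_{n-1}^{d-2})$, and substituting $\Cone(v_n,Z_{n-1}^{d-2}) = \Star(v_n,Z_n^{d-1})$ and the definition of $Z_{n-1}^{d-1}$ causes everything except $Z_n^{d-1}$ to cancel telescopically. Acyclicity then follows directly from Claim~\ref{cl:cone_extension}: by the inductive hypothesis $\supp(G)$ is a $d$-forest and $\supp(F_{n-1}^{d-1})$ is a $(d-1)$-forest, both on $V\setminus\{v_n\}$; the supports of $G$ and $\Cone(v_n,F_{n-1}^{d-1})$ are disjoint, since no simplex of $G$ contains $v_n$ while every simplex of the cone does, so $\supp(F_n^d) = \supp(G) \cup \Cone(v_n,\supp(F_{n-1}^{d-1}))$, which is a $d$-forest on $V$ by the claim. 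The deficit identity is then immediate from $|F_n^d| = |G| + |F_{n-1}^{d-1}|$ and Pascal's rule $\binom{n-1}{d} = \binom{n-2}{d} + \binom{n-2}{d-1}$.

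I do not expect a genuine obstacle; the non-routine ingredients, namely the chain identities~\eqref{eqnConeLink} and~\eqref{eqnBoundaryLift} and the conical-extension Claim~\ref{cl:cone_extension}, are precisely the tools the procedure has been engineered around. Since every relation used is a chain identity valid over any field, the argument is uniform in the choice of $\F$ and no separate case for $\Q$ is required; the only point deserving vigilance is the bookkeeping of signs in $\Cone$ and $\Link$, which is why I insist on invoking~\eqref{eqnBoundaryLift} verbatim rather than its mod-$2$ simplification.
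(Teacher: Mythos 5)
Your proposal is correct and follows essentially the same route as the paper's proof: verify that $Z_{n-1}^{d-1}$ is a cycle via $\Star(v_n,Z_n^{d-1})=\Cone(v_n,\Link(v_n,Z_n^{d-1}))$ and Equations~(\ref{eqnConeLink})--(\ref{eqnBoundaryLift}), compute $\partial F_n^d$ by the same telescoping cancellation, get acyclicity from Claim~\ref{cl:cone_extension} using disjointness of the supports, and obtain the deficit identity from Pascal's rule. No gaps; the argument matches the paper's induction on $(d,n)$ step for step.
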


\begin{proof}
The statement is obviously correct for $d=0$. Assume inductively that it is
correct for all $d'<d$ and for $d$ with $n' < n$.

First, let us verify that $Z_{n-1}^{d-1}$ is  a $(d-1)$-cycle
as otherwise the procedure is not even well defined. Indeed, since
$Z_n^{d-1}$ is a cycle, then $Z_{n-1}^{d-2}= \Link(v_n, Z_n^{d-1})$ is
a cycle as shown is Section \ref{sec:less_common}. Hence by induction it follows that
$\partial F_{n-1}^{d-1} = Z_{n-1}^{d-2}$. In addition, by Equations
(\ref{eqnConeLink}) and (\ref{eqnBoundaryLift}), $\partial (Star(v_n, Z_n^{d-1})) =
\Link(v_n, Z_{n}^{d-1}) = Z_{n-1}^{d-2}$. Plugging this into the expression for
$Z_{n-1}^{d-1}$ and taking its boundary it follows that

\[
\partial_{d-1} Z_{n-1}^{d-1} ~~=~~  \partial_{d-1} Z_n^{d-1} -  \, \Link(v_n,Z_n^{d-1}) +  
\, Z_{n-1}^{d-2}
~=~ 0  - Z_{n-1}^{d-2} + Z_{n-1}^{d-2} = 0
\]

Next, we show that $F_n^d$ is a filling of $Z_n^{d-1}$. 
Indeed, 
\[
\partial_d F_n^d ~=~  \partial_d \FILL(Z_{n-1}^{d-1},V
\setminus \{v_n\})  - \partial_d \Cone(v_n, F_{n-1}^{d-1})  = \]
\[
Z_{n-1}^{d-1} -   F_{n-1}^{d-1} + \Cone(v_n, \partial F_{n-1}^{d-1}) ~=~
Z_{n-1}^{d-1} - F_{n-1}^{d-1} + \Star(v_n,Z_{n-1}^{d-1}) ~=~ Z_n^{d-1} \]
 where the 2nd equality is by Equation (\ref{eqnBoundaryLift}), the next
 is by induction, and the last is by the definition of
 $Z_{n-1}^{d-1}$.

It remains to show that $F_n^{d}$ is acyclic.
Again, by induction this holds for $F_{n-1}^{d-1}$ and $F_{n-1}^d=
\FILL(Z_{n-1}^{d-1}, V \setminus \{v_n\})$. Hence this directly follows
from Claim \ref{cl:cone_extension}. 

Finally, $|\supp(F_n^d)|  = |\supp(\Cone(v_n,F_{n-1}^{d-1}))| +
|\supp(\FILL_{n-1}^{d-1}(Z_{n-1}^{d-1}, V \setminus \{v_n \})|$ since
these supports are disjoint. Further
$|\supp(\Cone(v_n,F_{n-1}^{d-1}))| = |\supp(F_{n-1}^{d-1}))|$,
hence, using that fact that ${n \choose k} = {n-1 \choose k-1} + {n-1
  \choose k}$,  it follows that  $\deficit (F_n^d) = \deficit(F_{n-1}^{d-1}) + \deficit(\FILL_{n-1}^{d-1}(Z_{n-1}^{d-1}, V \setminus \{v_n
\}))$ as claimed.
\end{proof}

\subsection{Proof of Conjecture~\ref{conj:main} for $d= 2$} 
\label{subsec:almost-HAM-2}

\subsubsection{Filling over $\F_2$}
We prove here the following restatement of Theorem \ref{thm:d2}
over $\F_2$.
\begin{theorem}
  \label{thm:d2_2}
Let $n\geq 4$. Every nonzero $1$-cycle $Z_n^1$ on $K_n^2$ has at least two  acyclic
fillings of deficit at most $1$ over $\F_2$. Further if the  the parity condition
holds it has a $0$-deficit acyclic filling, and for $n\geq 5$ it has
at least two such fillings. 
\end{theorem}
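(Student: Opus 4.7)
The plan is to induct on $n$, invoking the recursive procedure $\FILL(Z_n^1, V)$ defined above. Lemma~\ref{lem:082} already guarantees that each call produces an acyclic filling, with deficit decomposing additively as $\deficit(F_n^2) = \deficit(F_{n-1}^1) + \deficit(F_{n-1}^2)$, where $F_{n-1}^1$ is the chosen acyclic $1$-filling of $\Link(v_n, Z_n^1)$ on $V \setminus \{v_n\}$ and $F_{n-1}^2$ is the recursively constructed filling of the new $1$-cycle $Z_{n-1}^1$ on $V \setminus \{v_n\}$. The task therefore reduces to choosing the pivot $v_n$ and the forest $F_{n-1}^1$ so that (a) $Z_{n-1}^1 \neq 0$, keeping the recursion nontrivial; (b) the parity of $Z_{n-1}^1$ on $K_{n-1}^2$ is arranged so the inductive hypothesis yields a $0$-deficit $F_{n-1}^2$; and (c) enough distinct admissible choices exist to deliver two distinct final fillings.

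I would first handle (b) by a short mod-$2$ calculation. Since $|\Star(v_n, Z_n^1)| = \deg_{Z_n^1}(v_n)$ is even (as $Z_n^1$ is a cycle), the identity $Z_{n-1}^1 = Z_n^1 \oplus \Star(v_n, Z_n^1) \oplus F_{n-1}^1$ gives $|Z_{n-1}^1| \equiv |Z_n^1| + |F_{n-1}^1| \pmod 2$. Combined with $\binom{n-1}{2} - \binom{n-2}{2} = n - 2$, this shows: taking $F_{n-1}^1$ to be a spanning tree (deficit $0$, $|F_{n-1}^1| = n-2$) preserves the parity condition for $Z_{n-1}^1$ on $K_{n-1}^2$, while taking $F_{n-1}^1$ to be a $2$-component spanning forest (deficit $1$, $|F_{n-1}^1| = n-3$) reverses it. So in the parity case I would use a spanning-tree $F_{n-1}^1$ and inductively get a $0$-deficit $F_{n-1}^2$, assembling a $0$-deficit $F_n^2$; in the non-parity case I would use a $2$-component spanning forest $F_{n-1}^1$ and inductively get a $0$-deficit $F_{n-1}^2$, assembling a deficit-$1$ $F_n^2$.

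For (a) and (c), I would invoke Claim~\ref{cl:many}, which produces at least $(n-3)!$ deficit-$0$ fillings of any $0$-cycle on $V \setminus \{v_n\}$, together with a parallel bound for deficit-$1$ fillings obtained by deleting one well-chosen edge from a spanning-tree filling. Unpacking the definition, $Z_{n-1}^1 = 0$ forces $F_{n-1}^1$ to equal the single specific graph $Z_n^1 \setminus \Star(v_n, Z_n^1)$, so at most one admissible $F_{n-1}^1$ is bad; for $n \geq 5$ at least two good choices remain, and distinct admissible $F_{n-1}^1$ produce distinct $F_n^2$ since they disagree on the star of $v_n$. Once $n$ is large enough that the inductive hypothesis already furnishes two distinct $F_{n-1}^2$, that gives a second independent source of multiplicity. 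Base cases on $K_4^2$ (and $K_5^2$ if needed) are checked by direct enumeration of the $1$-cycles.

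The main obstacle I anticipate is the simultaneous bookkeeping of these three requirements, especially near the base of the induction: the parity must propagate correctly, the one bad choice of $F_{n-1}^1$ must not swallow the entire supply of admissible forests at the same $n$, and in the non-parity case one must verify that deficit-$1$ forest fillings are as abundant as deficit-$0$ ones. None of these is individually deep, but the induction has to be set up delicately so that the base case actually supplies the multiplicity consumed in the inductive step.
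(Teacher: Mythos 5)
Your proposal follows the paper's overall scaffolding closely: induction on $n$ via the $\FILL$ procedure, Lemma~\ref{lem:082} for acyclicity and the additive deficit formula, Claim~\ref{cl:many} for the abundance of lower-dimensional fillings that lets you dodge the single bad choice forcing $Z_{n-1}^1=0$, and a $\bmod\ 2$ bookkeeping step relating $|Z_{n-1}^1|$ to $|Z_n^1|$ and $|F_{n-1}^1|$. The one genuine deviation is how you handle the non-parity case. You propose switching to a deficit-$1$ (two-component) forest $F_{n-1}^1$ so that the parity of $Z_{n-1}^1$ is flipped into compliance, and then the inductive hypothesis hands you a $0$-deficit $F_{n-1}^2$, giving total deficit $1+0$. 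The paper instead keeps $F_{n-1}^1$ at deficit $0$ in \emph{both} cases; the failure of parity then propagates to $Z_{n-1}^1$, and the first clause of the inductive statement (two fillings of deficit $\leq 1$) is invoked, which — since $Z_{n-1}^1$ cannot admit a $0$-deficit filling — produces fillings of deficit exactly $1$, again giving $0+1$. Both routes are logically viable, and your parity computation confirming that size $n-3$ reverses the condition is correct.

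There is, however, a concrete flaw in your variant. You propose producing the deficit-$1$ forest fillings of the $0$-cycle $\Link(v_n, Z_n^1)$ ``by deleting one well-chosen edge from a spanning-tree filling.'' This does not work: deleting an edge $(a,b)$ from a tree $T$ changes the set of odd-degree vertices by $\{a,b\}$, so $T\setminus\{(a,b)\}$ is a filling of $\Link(v_n,Z_n^1)\oplus\{a,b\}$, not of $\Link(v_n,Z_n^1)$ itself, for every choice of $(a,b)$. Deficit-$1$ fillings of a $0$-cycle do exist (isolate one vertex outside the support and fill the rest with a suitable tree on $n-2$ vertices), but they need a different construction, and you would additionally need a multiplicity claim for them analogous to Claim~\ref{cl:many} to deliver the ``two distinct fillings'' required by the statement. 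The paper's route sidesteps this entirely by never needing deficit-$1$ fillings in dimension~$1$; if you retain your variant you will have to patch the construction and its multiplicity count.
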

\begin{proof}
 The proof is by induction on $n$ . The case of $n=3$ is trivial.  For $n=4$, if
 the cycle is of length $3$, the parity condition holds (and there is
 a unique $0$-deficit filling). If the cycle is of length $4$ the
 parity condition does not hold and  there are two $1$-deficit
 fillings. For $n =5$ there are two
 cycles that meet the parity condition, each has at least two
 $0$-deficit filling. This can be easily checked by the reader.

We assume that the theorem is correct for any $Z_{n-1}^1, ~ n-1\geq 5$.
Recall that $Z_n^{1} \; \xor \; \Star(v_n,
              Z_n^{1}) = Z_n^1 \setminus \{v\}$, namely the subgraph
              obtained from $Z_n^1$ by deleting the vertex $v$ and all
              simplices that contain it.
Assume that $n \geq 6$ and that the parity condition holds for the
given $Z_n^1$.
Let $v=v_n \in V(Z_n^1)$ be arbitrary. Then,  the procedure $\FILL$
sets $Z_{n-1}^{1} ~= ~
              (Z_n^{1} \setminus \{v_n\})
              ~ \xor \;F_{n-1}^{1} $, where $F_{n-1}^1 =
              \Fill(\Link(v_n,Z_n^1),~V\setminus \{v_n\}) $ is
              a $0$-deficit tree in $K_{n-1}^1$, namely
              over $V \setminus \{v_n\}$ of size $n-2$. This exists by
              Claim \ref{cl:many}, as explained in the preface of this Section. 

To complete the construction, namely, to be able to use the induction
hypothesis on $Z_{n-1}^1$,  we only need that $Z_{n-1}^1 \neq \emptyset$ and
that the parity condition is met for it (with $n' = n-1$).

Note that $Z_n^{1} \setminus \{v_n\} = A$ is fixed and fully determined from $Z_n^1$
              once $v_n$ is chosen.  Now, for $F_{n-1}^1$ we have $(n-3)!
              \geq 3$ different legitimate fillings by Claim
              \ref{cl:many}. Hence for at least two of them $Z_{n-1}^1 = A
              \xor F_{n-1}^1$ is not the trivial cycle as
              needed. Choose one specific such $F_{n-1}^1$. 

Finally, $|Z_{n-1}^1| = |Z_n^1| \xor |\Star(v_n, Z_n^1)| \xor (n-2)
~(\bmod 2)$. Note that $|\Star(v_n, Z_n^1)| \equiv 0 (2)$ as $Z_n^1$
is a $1$-cycle. It follows that $|Z_{n-1}^1| \equiv |Z_n^1| - (n-2) \equiv {n-1
  \choose 2} -(n-2) \equiv {n-2 \choose 2} ~ (\bmod 2)$.
Where the 2nd equality is by the fact that the parity condition holds
for $Z_n^1$. Hence the parity condition holds for $Z_{n-1}^1$.

To show that there are at least two such fillings, we use the
induction on $n$. Namely,  by induction there are at least two
$0$-deficit fillings $\Fill(Z_{n-1}^1)$ for the fixed
$Z_{n-1}^1$. These two fillings result in two distinct fillings in the
return statement using the chosen fixed $F_{n-1}^1$. 

For the case that the parity condition does not hold,  the same
argument as in the last  two paragraphs implies that the parity condition does not
hold for $Z_{n-1}^1$ too. Hence again by induction we get at least two
$1$-deficit filling as the deficit of $F_{n-1}^1$ is $0$.
\end{proof}

\subsubsection{$d=2$ over $\Q$}

A analog of Theorem \ref{thm:d2_2} for $\F = \Q$  is similar except
that there is no parity obstacle. On the other hand, the induction base
cases for $n \leq 5$ are different. 

\begin{theorem} 
\label{theoremQtrees}
Let $n \geq 4$. For any nontrivial $1$-cycle $Z_n^1$ over $\Q$  there exists a $0$-deficit
$2$-filling $F_n^2$ except for the following two cases (the cycles
$C_i$'s below are directed, and uniformly weighted). 
	%\renewcommand{\labelenumi}{\em (\roman{enumi})}
	%\begin{enumerate}
		%\item 

$n=4$ and $Z_n^1 = C_4$ 
		%\item 

$n=5$ and $Z_n^1= C_3$.

Further, if
$n\geq 6$ every $1$-cycle has at least two such fillings.
In all the exceptional cases there exists 2-fillings of deficit 1.
\end{theorem}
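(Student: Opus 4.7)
The plan is to follow the recursive construction $\FILL$ and induct on $n$, mirroring the proof of Theorem~\ref{thm:d2_2} over $\F_2$. The substantive difference is that over $\Q$ there is no parity obstruction, but one must avoid, at the end of each recursion branch, landing on the two exceptional shapes ($C_4$ on $4$ vertices and $C_3$ on $5$ vertices) listed in the statement.

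For the base cases $n=4$ and $n=5$, I would proceed by direct case analysis. On $K_4^2$ the maximal acyclic subcomplexes are exactly the three-triangle subsets of its four triangles, and the sum of the four oriented triangle boundaries vanishes; so $Z_4^1$ has a $0$-deficit filling iff the $3\times 4$ linear system expressing $Z_4^1$ as a combination of triangle boundaries admits a solution with a zero coordinate. A short computation shows this is possible for every nontrivial $Z_4^1$ except the uniformly weighted $C_4$, and an explicit deficit-$1$ filling is easy to exhibit in that case. The case $n=5$ is analogous: a finite enumeration of cycle shapes on $\leq 5$ vertices singles out the uniform $C_3$ as the unique exception, and for every non-exceptional cycle one produces a $0$-deficit filling (at least two when $n=5$ suffices for bootstrapping).

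For the inductive step with $n\geq 6$, fix a pivot $v_n\in V(Z_n^1)$ and set $Z_{n-1}^0 := \Link(v_n,Z_n^1)$. By the $\Q$-analog of Claim~\ref{cl:many} there are at least $(n-3)!$ distinct $0$-deficit $\Q$-tree fillings $F_{n-1}^1$ of $Z_{n-1}^0$. Lemma~\ref{lem:082} guarantees that $Z_{n-1}^1 := Z_n^1-\Star(v_n,Z_n^1)+F_{n-1}^1$ is a $1$-cycle on $V\setminus\{v_n\}$; since distinct $F_{n-1}^1$'s differ by nontrivial cycles of $K_{n-1}^1$ added to the same fixed chain, at most one choice yields the zero cycle. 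For $n\geq 7$ every resulting nontrivial $Z_{n-1}^1$ lives on $\geq 6$ vertices and is automatically non-exceptional, so the induction hypothesis yields two $0$-deficit fillings $\FILL(Z_{n-1}^1)$, which combine via the return statement of $\FILL$ with the fixed $\Cone(v_n,F_{n-1}^1)$ into two distinct $0$-deficit fillings of $Z_n^1$.

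The main obstacle is the transition $n=6\to n-1=5$, where one must additionally rule out $Z_5^1$ being a uniformly weighted $C_3$. My argument here is combinatorial counting: the uniform $C_3$ is rigid (a three-dimensional orbit under scaling and vertex relabeling), so for any fixed pivot $v_n$ only a bounded number of choices of $F_{n-1}^1$ can produce a $Z_{n-1}^1$ of this specific shape, while the total pool is $(n-3)! = 6$. If a single pivot does not suffice, varying $v_n$ over $V(Z_n^1)$ (which contains at least three vertices, since the minimal nontrivial $\Q$-cycle is a triangle) provides the required flexibility. This closes the induction; the deficit-$1$ fillings in the exceptional cases are the ones exhibited in the base-case analysis.
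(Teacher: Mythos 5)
Your overall strategy is the paper's own: induct via $\FILL$ exactly as in Theorem~\ref{thm:d2_2}, with the small-$n$ cases done by hand, and your step for $n\geq 7$ is sound (the exceptions concern the size of the ambient vertex set, not the shape of the cycle, so the induction hypothesis applies to any nontrivial $Z_{n-1}^1$ on at least $6$ vertices, and at most one choice of $F_{n-1}^1$ can make $Z_{n-1}^1$ vanish). The genuine gap is in how you dispose of $n\leq 6$. First, over $\Q$ the nontrivial $1$-cycles on $5$ or $6$ labelled vertices form an infinite family (the weights vary continuously), so ``a finite enumeration of cycle shapes'' does not settle the base case $n=5$; the paper needs real arguments there (a flow-decomposition argument at a degree-$2$ pivot, a count of admissible labelled trees against the $6$ labelled copies of $C_4$, etc.; see Appendix~\ref{sec:d2-Q}).

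Second, and more seriously, your counting argument for the transition $n=6\to 5$ does not hold up as stated. The bad outcomes are not a bounded set: they are the zero cycle together with \emph{all nonzero multiples} of the $10$ labelled triangles, i.e.\ a union of $10$ lines in cycle space. For a fixed pivot and $A=Z_6^1-\Star(v_6,Z_6^1)$, the fillings producing a bad outcome are the acyclic, tree-supported points on the lines $\{c\cdot C_3(T)-A\}$; a priori each triangle $T$ can contribute up to three such points (one for each value of $c$ cancelling an edge of $T\cap\supp(A)$ --- some edge of $T$ must cancel, since a tree support contains no triangle), so a crude bound is on the order of $30$ bad fillings, while Claim~\ref{cl:many} only guarantees a pool of $(6-3)!=6$; moreover you need at least \emph{two} good choices, because the theorem already claims two fillings at $n=6$. ``Varying the pivot'' is an appeal to flexibility, not an argument. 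What is needed --- and what the paper supplies in Appendix~\ref{sec:d2-Q} --- is a structural case analysis on the degrees of $Z_6^1$: a degree-$2$ pivot reduces to a flow carried by one or several paths and the freedom in choosing a Hamiltonian-path filling; a pivot of degree exactly $3$ is handled by counting the $30$ labelled trees with the two prescribed non-leaves against the $10$ labelled triangles; and if all degrees are at least $4$, then $\supp(A)$ has at least $8$ edges while the tree has only $4$, so $Z_5^1$ has at least $4$ edges and cannot be a triangle. An argument of this kind (or an equivalent one) is required to close $n=5$ and $n=6$; with that in place the rest of your induction goes through as you describe.
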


\begin{proof}
  Assuming by induction that a $0$-deficit filling for $6 \leq n' < n$
  exists for every non-trivial $1$-cycle $Z_{n'}^1$, the proof for
  such filling for $Z_n^1$ is immediate and identical to the proof of
  Theorem \ref{thm:d2_2} (with addition over $\Q$ replacing $\xor$).

For $n \leq 6$ a case analysis is  presented in Appendix section \ref{sec:d2-Q}.
\end{proof}

%%%%%%%%%%%%%%%%%%%%%%%%%%%%%%%%%%%%5
%
%
\section{Proof of Theorem~\ref{th:d>3}}
\label{sec:d>3}
%%%%%%%%%%%%%%%%%%%%%%%%%%%%%%%%%%%%
Fillings based on procedure $\FILL$ are not adequate to proof
Conjecture~\ref{conj:main}. The recursive call, even for $d=3$ uses 
filling for $d=2$ in the top level, which may not be $0$-deficit due to the
parity obstacle in the case of $\F_2$ (which is not an obstacle at all
for $d=3$), or due to the bad base cases for $\F=\Q$.   

An application of Theorem \ref{theoremQtrees} directly imply a filling
for $Z_n^{d-1}$ over $\Q$ of deficit $O(n^{d-3})$, see Section \ref{sec:over-q}.

A similar application of Theorem \ref{thm:d2_2} would 
 imply a filling for $Z_n^{d-1}$ of deficit $O(n^{d-2})$ over $\F_2$.  We aim
 however for the  same bound as for $\Q$. For this we will need to treat the
 case $d=3$ more carefully for $\F_2$. This will be done in the
 following Section
 \ref{sec:over-2}.

\subsection{Fillings over $\F_2$}\label{sec:over-2}
 We aim here to prove a slightly stronger results for
 $d=3$ and $\F_2$. It asserts that  a deficit of at most $1$ can
 always be achieved,  and a $0$-deficit can also be achieved for
 a large collection of cycles called {\em friendly cycles} below.

\ignore{
\begin{theorem}
\label{th:d>3_f2}
Let $Z_n^{d-1}$ be a nonempty  $(d-1)$-cycle over $\F_2$ on $K_n^d$. Then 
there  exists an acyclic filling $F_n^d$  of $Z_n^{d-1}$ of deficit
$O(n^{d-3})$. 
In particular, for $d=3$, the deficit is constant.
\end{theorem}

The proof is by induction on $n$. Here the base case
for $d=3$ is the hard part, while for $d > 3$ the proof is trivial
and based on it. We start with the following theorem for $d=3$, for
which we also need the following definition.
}%ignore

Recall that for a chain $C \subseteq K_n^d$ and a vertex $u \in [n],~
deg(u,C) = |\Star(u,C)|$ namely, it is the number of $d$-simplices in
$C$ that contain $u$.

\begin{definition}[friendly cycle]
  A cycle $Z_n^2$ is called friendly if there exist two vertices
  $v',v'' \in V(Z_n^2)$ such that $deg(v',Z^2_n) \not\equiv deg(v'',
  Z_n^2) ~ (\bmod ~2)$.%, where $deg(v,C) = |\Link(v,C)|$ for a simplicial complex $C$.
\end{definition}

\begin{theorem}
  \label{thm:d=3_f2}
Let $Z_n^{2}$ be a friendly  $2$-cycle over $\F_2$ on $K_n^2$. Then 
there  exists an acyclic filling $F_n^3$  of $Z_n^{2}$ of
$0$-deficit. Moreover, if $n \geq 7$ there are at least $2$ such fillings.
\end{theorem}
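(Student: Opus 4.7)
The plan is to prove the theorem by induction on $n$, running the $\FILL$ procedure of Section~\ref{sectionFillings} with a carefully chosen pivot and a carefully chosen 2-filling at each level. The base cases would be the small $n$ for which friendly 2-cycles exist — the smallest is $n = 5$, witnessed for example by $Z = \partial\{1,2,3,4\} \oplus \partial\{2,3,4,5\}$, whose degree-parity sequence is $(1,0,0,0,1)$ — handled by explicit case analysis for $n \in \{5, 6, 7\}$.

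For the inductive step with a friendly $Z_n^2$ on $V = [n]$, the parity condition~(\ref{eq:1}) for a $0$-deficit acyclic $2$-filling of $Z_{n-1}^1 := \Link(v_n, Z_n^2)$ in $K_{n-1}^2$ reads $\deg(v_n, Z_n^2) \equiv {n-2 \choose 2} \pmod 2$, and friendliness of $Z_n^2$ guarantees a pivot $v_n \in V(Z_n^2)$ of the required parity. Theorem~\ref{thm:d2_2} then supplies at least two $0$-deficit acyclic $2$-fillings $F_{n-1}^2$ of $Z_{n-1}^1$ (using $n \geq 6$). Setting $Z_{n-1}^2 := Z_n^2 \oplus \Star(v_n, Z_n^2) \oplus F_{n-1}^2$ as in $\FILL$, Lemma~\ref{lem:082} already ensures $Z_{n-1}^2$ is a $2$-cycle on $V \setminus \{v_n\}$; the remaining tasks are to verify that some choice of $F_{n-1}^2$ makes $Z_{n-1}^2$ both nontrivial and friendly. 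Once this is done, the induction hypothesis yields a $0$-deficit acyclic $F_{n-1}^3 = \Fill(Z_{n-1}^2)$, and Lemma~\ref{lem:082} assembles $F_n^3 := F_{n-1}^3 \oplus \Cone(v_n, F_{n-1}^2)$, automatically acyclic and of $0$-deficit.

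Nontriviality of $Z_{n-1}^2$ is cheap — the equation $Z_{n-1}^2 = 0$ pins down $F_{n-1}^2$ uniquely, so at most one of the $\geq 2$ available fillings is ruled out. For friendliness I would exploit that each edge of the $2$-cycle $Z_n^2$ lies in an even number of triangles, which yields
\[
\deg(u, Z_{n-1}^2) \equiv \deg(u, Z_n^2) + \deg(u, F_{n-1}^2) \pmod 2 \qquad (u \in V \setminus \{v_n\}).
\]
If the friendly witnesses $v', v''$ of $Z_n^2$ both survive in $V \setminus \{v_n\}$, they witness friendliness of $Z_{n-1}^2$ exactly when they have the same degree-parity in $F_{n-1}^2$; since two distinct $0$-deficit fillings differ by a nontrivial $2$-cycle, switching between the two available candidates toggles the relevant parities, and one of them can be argued to pass the test. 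The multiplicity claim for $n \geq 7$ then follows either from two distinct choices of $F_{n-1}^2$ (each giving a distinct $Z_{n-1}^2$ and hence a distinct $F_n^3$) or, fixing $F_{n-1}^2$, from the two inductive fillings of $Z_{n-1}^2$.

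The main obstacle I anticipate lies in the corner cases of the friendliness-propagation step: when the parity class required for $v_n$ is a singleton (so the pivot must be taken from $\{v', v''\}$ itself and only one witness survives), or when the two available $2$-fillings happen to agree on the parity of every vertex pair that could witness friendliness. Resolving these will likely require either preferring a pivot from the more populous parity class whenever possible, or strengthening Theorem~\ref{thm:d2_2} to produce $2$-fillings with prescribed vertex-degree parities at a designated pair of vertices — a refinement the current statement does not explicitly afford, and which I expect to be the real technical work of the proof.
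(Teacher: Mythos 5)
Your outline matches the paper's high-level strategy exactly — induction on $n$ via the $\FILL$ recursion, choosing the pivot $v_n$ by friendliness so that $\Link(v_n,Z_n^2)$ meets the parity condition, and then worrying about whether the derived cycle $Z_{n-1}^2$ is itself friendly — and your parity identity $\deg(u,Z_{n-1}^2)\equiv\deg(u,Z_n^2)+\deg(u,F_{n-1}^2)\pmod 2$ is correct (it is the paper's $A(u)\oplus B(u)$ with $A(u)$ simplified via the observation that every edge of a $2$-cycle has even degree). But the friendliness-propagation step, which you correctly flag as the crux, is where your argument has a genuine gap: the claim that ``switching between the two available candidates toggles the relevant parities'' is not justified, since the difference of two $0$-deficit $2$-fillings is a $2$-cycle whose degree-parity at $v'$ and $v''$ is unconstrained, and your fallback scenarios (only one original witness survives, or both candidate fillings agree on all relevant parities) are left unresolved.

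The paper closes exactly this gap, and in essentially the way you predicted would be necessary: rather than treating Theorem~\ref{thm:d2_2} as a black box, Lemma~\ref{lemma:friendly} reaches inside the recursive structure of $\FILL$. Claim~\ref{cl:friendly_internal} shows that $B(u)=\deg(u,F_{n-1}^2)$ is $n-3$ if $u$ is chosen as the next pivot $v_{n-1}$, and has an explicit expression in terms of $\deg(u,\FILL(Z_{n-2}^1))$ and $\deg(u,\Link(v_{n-1},Z_{n-1}^1))$ otherwise. Lemma~\ref{lemma:friendly} then runs a three-way case analysis on the $A$-parities and the adjacency structure of $Z_{n-1}^1$, in each case choosing the inner pivots $v_{n-1},v_{n-2}$ (and, in one case, $v_{n-3}$) so as to manufacture two vertices of opposite degree-parity in $Z_{n-1}^2$ — the friendly witnesses of $Z_{n-1}^2$ are constructed fresh inside the lemma, not inherited from the witnesses of $Z_n^2$. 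This also sidesteps your singleton-parity-class worry, because Lemma~\ref{lemma:friendly} is proved for every nonempty $Z_n^2$ and every pivot $v_n$, with friendliness of $Z_n^2$ entering only later (in the proof of Theorem~\ref{thm:d=3_f2}) to make $F_{n-1}^2$ $0$-deficit rather than $1$-deficit. So your diagnosis of what is needed is accurate, but the lemma that supplies the prescribed-parity fillings — together with the supporting Claims~\ref{cl:case1}, \ref{cl:case_2.1}, \ref{cl:case_2_d=3} in the appendix — constitutes the bulk of the actual proof and is missing from your proposal; the base cases $n\le 7$ are also handled by computer search in the paper rather than the manual case analysis you envisage.
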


\noindent
{\bf A matter of notations:} The recursion call for $\FILL(Z_n^2,V)$
results in a double recursion:  one for the lower dimensional
$\FILL(Z_{n-1}^1, ~ V')$ and the other is for
$\FILL(Z_{n-1}^2, ~ V')$, where $V' = V\setminus \{v_n\}$. For the latter, all arguments
will be determined by the induction process. For the former, in order make the notations less
cumbersome  we remove $V'$ from  $\FILL(Z_{k}^1,V')$ and just write
$\FILL(Z_k^1)$. The subscript $k$ defines the current $|V'|$ (for a
filling of a $1$-dim cycle) and its
actual value is $V' = V \setminus \{v_n, v_{n-1}, .... v_{k+1} \}$ for
the implicitly defined pivot vertices $\{v_n,..., v_{k+1}\}$. 

Before proving the theorem we first start with an explicit expression for the degree of a vertex
in $Z_{n-1}^2$, where $Z_{n-1}^2$ is the cycle generated by the call
of $\FILL(Z_n^2, ~ V\setminus \{v_n\} )$ at the top level recursion. This will be used
later to see how the degree of a vertex w.r.t $Z_{n-i}^1$ evolves in the recursion.

\begin{claim}
  \label{cl:friendly_internal}
Let $Z_{n-1}^2$ be as defined by $\FILL(Z_n^2,V)$ using $v_n$ as the pivot
vertex at the top recursion call. Let $u \in [n] \setminus \{v_n\}$. 
Then $deg(u,Z_{n-1}^2) = A(u) \xor B(u)$ where $A(u)=deg(u,Z_n^{2} \; \xor \; \Star(v_n,
              Z_n^{2}))$ depends only on
$Z_n^2$ and $v_n$ but not the implementation  of $\FILL$ in the lower
recursion levels. 
$B(u)= deg(u,\FILL(Z_{n-1}^{1}))$ depends on whether $u=v_{n-1}$ in the recursive call for
$\FILL(Z_{n-1}^1)$ or not.  

If $u=v_{n-1}$ we have
$B(u) = n-3$. 

Otherwise $$B(u) \equiv deg(u,\FILL(Z_{n-2}^1)) \xor
deg(u,\Link(v_{n-1},Z_{n-1}^1)) ~ ~ ~ ~ (\bmod ~2)$$
\end{claim}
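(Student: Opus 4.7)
The plan is to read the identity off the definition of $\FILL$ by expanding the outer call $\FILL(Z_n^{2},V)$ and the inner call $\FILL(Z_{n-1}^{1})$, and using that over $\F_2$ the vertex-degree function is linear on chains modulo $2$: any simplex appearing in both summands of a chain-XOR cancels, and likewise contributes $0$ to the mod-$2$ count of simplices through $u$. The outer call sets
\[
Z_{n-1}^{2}\;=\;\bigl(Z_n^{2}\oplus\Star(v_n,Z_n^{2})\bigr)\oplus F_{n-1}^{2},\qquad F_{n-1}^{2}:=\FILL(Z_{n-1}^{1}),\qquad Z_{n-1}^{1}:=\Link(v_n,Z_n^{2}),
\]
so by this linearity $deg(u,Z_{n-1}^{2})\equiv A(u)+B(u)\pmod 2$. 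The term $A(u)=deg(u,Z_n^{2}\oplus\Star(v_n,Z_n^{2}))$ depends only on $v_n$ and $Z_n^{2}$, since $Z_n^{2}\oplus\Star(v_n,Z_n^{2})$ is simply the restriction of $Z_n^{2}$ to the simplices avoiding $v_n$.

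To analyse $B(u)$, unfold the inner call with pivot $v_{n-1}$: it returns
\[
F_{n-1}^{2}\;=\;\FILL(Z_{n-2}^{1})\oplus\Cone(v_{n-1},F_{n-2}^{1}),\qquad F_{n-2}^{1}:=\FILL\bigl(\Link(v_{n-1},Z_{n-1}^{1})\bigr).
\]
By Claim~\ref{cl:many} we may fix $F_{n-2}^{1}$ to be a $0$-deficit $1$-filling, i.e.\ a spanning tree on $V\setminus\{v_n,v_{n-1}\}$, so $|F_{n-2}^{1}|=n-3$. Applying the mod-$2$ linearity of $deg$ again yields
\[
B(u)\;\equiv\;deg(u,\FILL(Z_{n-2}^{1}))+deg(u,\Cone(v_{n-1},F_{n-2}^{1}))\pmod 2.
\]

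It remains to evaluate the cone contribution in the two cases. If $u=v_{n-1}$, every triangle of $\Cone(v_{n-1},F_{n-2}^{1})$ contains $v_{n-1}$ while $\FILL(Z_{n-2}^{1})$ is supported on $V\setminus\{v_n,v_{n-1}\}$; the two chains are therefore disjoint at $v_{n-1}$, and $B(v_{n-1})=|F_{n-2}^{1}|=n-3$ as an integer identity. If $u\neq v_{n-1}$, a triangle $\{v_{n-1}\}\cup e\in\Cone(v_{n-1},F_{n-2}^{1})$ contains $u$ iff the edge $e\in F_{n-2}^{1}$ does, so $deg(u,\Cone(v_{n-1},F_{n-2}^{1}))=deg(u,F_{n-2}^{1})$. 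Finally, $F_{n-2}^{1}$ is a $1$-filling of the $0$-cycle $\Link(v_{n-1},Z_{n-1}^{1})$, and hence its odd-degree vertices are precisely the support of that $0$-cycle; that is, $deg(u,F_{n-2}^{1})\equiv deg(u,\Link(v_{n-1},Z_{n-1}^{1}))\pmod 2$, and the second formula of the claim follows.

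The argument is a direct bookkeeping on the two top levels of the recursion; no genuine obstacle arises. The only mild subtlety is to insist on the $0$-deficit choice for $F_{n-2}^{1}$ (legitimate by Claim~\ref{cl:many}), and to invoke the elementary fact that the odd-degree vertex set of a $1$-filling equals the $0$-cycle being filled.
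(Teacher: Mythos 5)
Your proof is correct and follows essentially the same route as the paper's: expand the two top levels of the $\FILL$ recursion, use mod-$2$ linearity of $deg(u,\cdot)$ on XOR of chains, and split on $u=v_{n-1}$ versus $u\neq v_{n-1}$, with $F_{n-2}^{1}$ taken to be a $0$-deficit tree of size $n-3$. You are in fact slightly more careful than the paper at the final step, correctly stating $deg(u,F_{n-2}^{1})\equiv deg(u,\Link(v_{n-1},Z_{n-1}^{1}))\pmod 2$ as a congruence (via the odd-vertex characterization) rather than as an equality of integers.
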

\begin{proof}
  Recall that by the definition of $\FILL(Z^2_n)$ with respect to $v_n$
  being the pivot, $$Z_{n-1}^{2} =
              Z_n^{2} \; \xor \; \Star(v_n,
              Z_n^{2})\; \xor \;F_{n-1}^{2}$$ where
              $F_{n-1}^2 = \FILL(Z_{n-1}^1)$ and $Z_{n-1}^1 = \Link(v_n,Z_n^2))$.

Hence,               $$deg(u,Z_{n-1}^{2}) \equiv
               deg \left [ ((u,Z_n^{2}) \xor \Star(v_n,Z_n^{2}))
              \right ]\; \xor \;deg(u,F_{n-1}^{2}) \equiv A(u) \xor B(u) ~
              ~ ~ (\bmod ~2)$$

Now obviously $A(u)$ depends only on $Z_n, v_n$ but not on the
implementation of $F_{n-1}^2$.

$B(u) \equiv deg(u,F_{n-1}^{2}) \equiv deg(u, \FILL(Z_{n-1}^1)) ~ ~
~ 
~ (\bmod ~2)$.

Recall that using $\FILL$ recursively $\FILL(Z_{n-1}^1) = \FILL(Z_{n-2}^1)
\xor \Cone(v_{n-1}, \FILL(\Link(v_{n-1}, Z^1_{n-1})))$. Recall also that
$\Link(v_{n-1}, Z^1_{n-1})= Z_{n-2}^0$ is $0$-dim cycle namely, an even set of
vertices and hence $\FILL(\Link(v_{n-1}, Z^1_{n-1}))$ can be
implemented to result in a $0$-deficit
tree $T_{n-2}$ on $[n-2]$, whose set of odd vertices is $Z_{n-2}^0$.

  If $u=v_{n-1}$ in the call for
$\FILL(Z_{n-1}^1),$ $v_{n-1} \notin V(\FILL(Z_{n-2}^1))$ while it
forms a $2$-simplex with every edge of $T_{n-2}$, namely with $n-3$ edges.
 Hence the claim follows in
this case.

If $u \neq v_{n-1}$ then by definition of $B(u) \equiv deg(u,\FILL(Z_{n-2}^1)) \xor
deg(u,\Cone(v_{n-1},T_{n-2})) $, where $T_{n-2}$ is a tree as above.
 But $deg(u,\Cone(v_{n-1},T_{n-2}) = deg(u,T_{n-2})  = deg(u,\Link(v_{n-1},Z_{n-1}^1)) $ and the
claim follows.
\end{proof}

The core of the argument in the proof of the theorem is to analyze how the
parity condition of $Z_{n-1}^1$ depends on $Z_{n}^1$ and the vertex
$v_{n}$ that is chosen to be the pivot in the top level call of $\FILL$. It is shown
next, that regardless of $Z_n^2$ and $v_n$ 
that  determine $Z_{n-1}^1$, the freedom in the construction of
$F_{n-1}^1$ in the top call of $\FILL$ is enough to guarantee that
$Z_{n-1}^2$ will be friendly.

\begin{lemma}
  \label{lemma:friendly}
Let $n \geq 7$, $Z_n^2$ a non empty $2$-cycle and $v_n \in V(Z_n^2)$. Then
there is $F_{n-1}^2 = \FILL(\Link(v_n, Z_n^2))$ as guaranteed by
Theorem  \ref{thm:d2_2} such that 
$Z_{n-1}^2$ that is produced by the call $\FILL(Z_n^2)$  using
$F_{n-1}^2$ in the top recursion level is a  friendly
cycle.  

 Further, if $\Link(v_n,Z_n^2)$ is friendly, then there are at
least two distinct such
$0$-deficit fillings $\Fill(\Link(v_n,Z_n^2))$. 
If
$\Link(v_n,Z_n^2)$ is not friendly then there are two distinct $1$-deficit fillings as above.
\end{lemma}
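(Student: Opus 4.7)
The plan is to translate friendliness of $Z_{n-1}^2$ into a condition on a single combinatorially explicit function. Writing $A(u) = \deg(u, Z_n^2 \oplus \Star(v_n, Z_n^2))$ and $B(u) = \deg(u, F_{n-1}^2)$, Claim~\ref{cl:friendly_internal} yields $\deg(u, Z_{n-1}^2) \equiv A(u) \oplus B(u) \pmod{2}$, so $Z_{n-1}^2$ is friendly if and only if the map $u \mapsto A(u) \oplus B(u)$ is non-constant on $V(Z_{n-1}^2)$. The function $A$ is entirely fixed by the data $(Z_n^2, v_n)$, so the entire task reduces to engineering the filling $F_{n-1}^2$ so that its degree-parity function $B$ perturbs $A$ off any constant value.

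First, I would fix a pivot $v_{n-1} \in V(Z_{n-1}^1)$ and invoke Theorem~\ref{thm:d2_2} inductively to obtain a valid filling $F_{n-1}^2 = \FILL(Z_{n-1}^1)$ of the appropriate deficit --- $0$ when the parity condition of Theorem~\ref{thm:d2_2} holds for $Z_{n-1}^1$, and $1$ otherwise. By the pivot clause of Claim~\ref{cl:friendly_internal}, $B(v_{n-1}) \equiv n-3 \pmod{2}$ independently of the lower-level choices, pinning down one parity value. For every other $u \in V(F_{n-1}^2)$, the claim gives $B(u) \equiv \deg(u, \FILL(Z_{n-2}^1)) \oplus [u \sim v_{n-1}\text{ in }Z_{n-1}^1] \pmod{2}$, so $B$ at non-pivot vertices is entirely controlled by the recursive $2$-filling $\FILL(Z_{n-2}^1)$. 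Crucially, Theorem~\ref{thm:d2_2} guarantees at least two distinct such inductive fillings (since $n-2 \ge 5$), and the $\F_2$-difference of any two is a non-zero $2$-cycle, which must contain a vertex of odd degree.

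The key step is then a case split. If the base filling already yields a friendly $Z_{n-1}^2$, we are done. Otherwise, swapping to the second inductive filling of $Z_{n-2}^1$ alters $B$ at some vertex $w$; if $w \in V(Z_{n-1}^2)$, this flips $A(w)\oplus B(w)$ and breaks constancy, establishing friendliness. The second clause of the lemma --- that there are two distinct fillings of the claimed deficit --- then follows by applying this construction to each of the two inductive choices guaranteed by Theorem~\ref{thm:d2_2}, with the deficit dictated by whether the parity condition for $Z_{n-1}^1$ holds (which is the reading of the phrase ``$\Link(v_n, Z_n^2)$ is friendly'' in context).

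The main obstacle I anticipate is the pathological subcase in which the flipping vertex $w$ lies outside $V(Z_{n-1}^2)$, so the perturbation is invisible at the level of friendliness. To handle this, I would iterate over the $|V(Z_{n-1}^1)| \ge 3$ candidate pivots, exploiting the $(n-4)!$-fold multiplicity from Claim~\ref{cl:many} at the one-dimensional sub-recursion and the headroom provided by $n \ge 7$. The combinatorial crux of the lemma is precisely this parity-matching bookkeeping --- guaranteeing that some combination of pivot choice and recursive filling produces a $B$-parity flip at a vertex inside $V(Z_{n-1}^2)$ --- rather than any deeper structural ingredient.
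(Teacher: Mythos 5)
Your reduction of friendliness of $Z_{n-1}^2$ to non-constancy of $u\mapsto A(u)\oplus B(u)$ via Claim~\ref{cl:friendly_internal} is the right starting point, and it is also how the paper begins. However, the central mechanism you propose --- perturb $B$ by swapping to a second filling of $Z_{n-2}^1$ and argue that the swap flips $A\oplus B$ at some vertex of $V(Z_{n-1}^2)$ --- has a genuine gap, and it is not the route the paper takes.

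The specific false step is the assertion that the $\F_2$-difference of two fillings of $Z_{n-2}^1$, being a nonzero $2$-cycle, ``must contain a vertex of odd degree.'' That is not a property of $2$-cycles: the boundary of the octahedron is a nonzero $2$-cycle on $6$ vertices in which every vertex has degree $4$. So the swap can leave every $B(u)$ unchanged modulo $2$, and the argument stalls at its first step. Even when the difference does have odd-degree vertices, two further problems arise: the flip generally occurs at several vertices simultaneously, and a multi-vertex flip can preserve constancy of $A\oplus B$; and changing $F_{n-1}^2$ also changes $Z_{n-1}^2$ itself, so $V(Z_{n-1}^2)$ is a moving target, which your ``if $w\in V(Z_{n-1}^2)$'' test does not account for. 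Your proposed fallback for the pathological case --- iterating over pivots and appealing to the $(n-4)!$ multiplicity of Claim~\ref{cl:many} --- is not developed into an argument that any of these iterations must succeed, and nothing in the surrounding machinery guarantees it.

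The paper avoids the existential flipping argument entirely. Instead of hoping a swap lands somewhere useful, it \emph{engineers} two target vertices $u,u'$ in $V(Z_{n-1}^1)$ by taking them to be the pivots $v_{n-1}$ and $v_{n-2}$ of the two innermost recursion levels inside $\FILL(Z_{n-1}^1)$. Claim~\ref{cl:friendly_internal} then pins the parities $B(v_{n-1})\equiv n-3$ and $B(v_{n-2})\equiv (n-4)+\deg(u',\Link(u,Z_{n-1}^1))\pmod 2$ deterministically. The argument splits on the structure of $Z_{n-1}^1$ and the bicoloring induced by $A$: whether there is an $A$-bichromatic adjacent pair, whether there is an $A$-monochromatic non-adjacent pair, and the degenerate case where $Z_{n-1}^1$ is one or two $A$-monochromatic cliques (Claims~\ref{cl:case1}, \ref{cl:case_2.1}, \ref{cl:case_2_d=3}). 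In each case the pivot choice forces $\deg(u,Z_{n-1}^2)\not\equiv\deg(u',Z_{n-1}^2)$. The two distinct fillings required by the second assertion are obtained by swapping the roles of $u$ and $u'$, not by swapping a recursive filling. To repair your proposal you would essentially need to prove that for the particular $Z_{n-2}^1$ that arises you can always exhibit two fillings whose difference has an odd-degree vertex inside the relevant vertex set, and that the surrounding constancy survives the multi-vertex flip --- both of which are extra claims comparable in difficulty to the paper's direct case analysis.
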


\begin{proof}
Let $Z_{n-1}^1 = \Link(v_n,Z_n^2)$  be the $1$-cycle that is defined in the
call of procedure $\FILL(v_n,Z_n^2)$. Let $F_{n-1}^{2} =
\FILL(Z_{n-1}^{1}) $ and  $Z_{n-1}^{2} =  Z_n^{2} \xor  \Star(v_n,
Z_n^{2}) \xor \;F_{n-1}^{2}$.  To prove the claim it is enough to show
that $F_{n-1}^2$
can be constructed so that (a) there are two vertices $x,y \in V(Z_{n-1}^2)$ for which
$deg(x,Z_{n-1}^2) \not\equiv deg(y,Z_{n-1}^2) ~ ~ (\bmod ~2)$, (b) that
$F_{n-1}^2$ is $0$-deficit or $1$-deficit depending on whether $Z_n^2$
is friendly or not, correspondingly, and (c) - that two such distinct
$F_{n-1}^2$ can be constructed for each case.

 Consider the following cases:

\noindent
{\bf Case 1:} there are $u,u' \in V(Z_{n-1}^1)$ such that $A(u) \not\equiv
A(u')~ (\bmod ~2)$ and $(u,u') \in Z_{n-1}^1$. Here $A(v)$ is as defined in Claim \ref{cl:friendly_internal}.

In that case we choose $u = v_{n-1}$ in the definition of
$F_{n-1}^2=\FILL(Z_{n-1}^1)$, and $u' = v_{n-2}$; namely the pivot vertex in the
call of $\FILL(Z_{n-2}^1)$ which is made in the next recursion level
call in the construction of  $F_{n-1}^2=\FILL(Z_{n-1}^1)$.
We will need to show that $u' \in V(Z_{n-2}^1)$ for
this to be possible. Assume for now  that $u' \in V(Z_{n-2}^1)$.

Claim \ref{cl:friendly_internal} implies that 
\begin{equation}
  \label{eq:3.5-1}
  deg(u,Z_{n-1}^2) ~\equiv A(u) \xor B(u) \equiv~ ~  A(u) \xor  n-3 ~
(\bmod ~2)
\end{equation}
Also, by the same Claim, 

 \begin{equation}
   \label{eq:3.5-2}
   deg(u',Z_{n-1}^2) ~\equiv A(u') \xor B(u') \equiv~ ~
   deg(u',\FILL(Z_{n-2}^1)) \;\xor \; 
 deg(u',\Link(v_{n-1},Z_{n-1}^1)) ~ ~ (\bmod ~2)
 \end{equation}
 Since $v_{n-2}=u'$,
 reapplying Claim \ref{cl:friendly_internal} w.r.t $u'$ and
 $Z_{n-2}^1$,  we get $deg(u',\FILL(Z_{n-2}^1)) \equiv n-4
 ~ (\bmod ~2)$.

 Since $(u,u') \in Z_{n-1}^1$ we have that $u' \in
 \Link(u,Z_{n-1}^1)$ namely $deg(u',\Link(v_{n-1},Z_{n-1}^1)) \equiv 1
 ~ (\bmod ~2)$.

 Plugging the above into Equation (\ref{eq:3.5-2}) and using that
 $A(u) \not\equiv A(u')$,   we conclude that 
$deg(u,Z_{n-1}^2) \not\equiv deg(u',Z_{n-1}^2)$, namely that 
$Z_{n-1}^2$ is friendly.

Further, Theorem \ref{thm:d2_2} asserts that
$F_{n-1}^2$ can be made $0$-deficit if $Z_{n-1}^1$ meets the parity
conditions, and of deficit $1$ otherwise.

To conclude this case what is left to be shown is that we can
construct $Z_{n-2}^1$ such that $u' \in V(Z_{n-2}^1)$. This is done
using the relatively large freedom we have in constructing
 $Z_{n-2}^1$. The argument is  formally 
presented in Claim \ref{cl:case1},  Appendix \ref{app:d=3}.  Finally,
this construction will result in one $F_{n-1}^2$ as needed. To
construct a different one with the same properties it is enough to
exchange the roles of $u,u'$ in the construction above. It is left for
the reader to realize that this will  result in a different
$F_{n-1}^2$ (as in particular $u,u'$ will have different degrees with
respect to $Z_{n-1}^2$ in the two constructions).

\noindent
{\bf case 2:} Assuming that Case 1 does not happen then in every component of
$Z_{n-1}^1$ every two vertices $x,y$ have  $A(x)
\equiv A(y) ( \bmod ~2)$.

If
 there are
$u,u'$ with $A(u) \equiv A(u') ~ (\bmod ~ 2)$ but $(u,u') \notin
Z_{n-1}^1$, then choosing $u=v_{n-1}$ we get $B(u)=n-3$. We show in Claim
\ref{cl:case_2.1} in  Appendix \ref{app:d=3}  that $Z_{n-2}^1$ can be constructed
so that $u'\in V(Z_{n-2}^1)$. Hence choosing $u'=v_{n-2}$ implies that
$B(u') \equiv (n-4) + deg(u',\Link(u,Z_{n-1}^1)) \equiv (n-4)
~ (\bmod 2)$ on account that $(u,u') \notin Z_{n-1}^1$. We conclude
that $deg(u,Z_{n-1}^2) \not\equiv deg(u',Z_{n-1}^2)$ and hence
$Z_{n-1}^2$ is friendly. 

Further $F_{n-1}^2$ is of $0/1$-deficit as needed as in the previous
case. In addition, exchanging the roles of $u,u'$ will result in a
different $F_{n-1}^2 = \Fill(Z_{n-1}^1)$ with the same desired properties, by a
similar argument as in the previous case.

\noindent
{\bf case 3:}
We are left with the case that neither case 1, nor case 2 occur. In
this case either 
$Z_{n-1}^1$ is the complete graph on $[n-1]$ and is monochromatic
w.r.t. $A(*)$, or $Z_{n-1}^1$ is a
union of  two  cliques, each being monochromatic w.r.t. $A(*)$ and
with different values of $A(*)$ in these two cliques. 
This very special case is analysed in Claim \ref{cl:case_2_d=3} in
Appendix \ref{app:d=3}. It  asserts that in this case too $Z_{n-2}^1$
can be made friendly. Further two corresponding $F_{n-1}^2$  of
$0/1$-deficit are constructed as needed.
\end{proof}

\begin{proof}[of Theorem \ref{thm:d=3_f2}]

The proof is by induction on $n$. 
The base case is for $n\leq 7$ which we have checked by a computer
program see  Appendix \ref{cl:thm:d=3_f2_n<6}. The Theorem is in fact true for $n=6$, but
we have stated it for $n\geq 7$ so to use one computer program for
every cycle (friendly or not) - see Theorem \ref{thm:d=3_f2-x}.

Let $n \geq 8$ and let $Z_n^2$ be a friendly cycle. Let $v \in V(Z_n^2)$ for which
$Z_{n-1}^1 = \Link(v,Z_n^2)$ meets the parity conditions. Such $v$ exists by the
assumption of $Z_n^2$ being friendly. Set $v=v_n$ and use the
procedure $\FILL$ with $v_n$. This will produce a filling 
$F_n^3 =   \FILL(Z_{n-1}^2)  \xor
\Cone(v_n, F_{n-1}^{2}) ~ )$, where  $F_{n-1}^2 = \Fill(Z_{n-1}^1)$ 
 is as guaranteed by Lemma \ref{lemma:friendly} to result in a
 friendly $Z_{n-1}^2$. Hence by induction $\FILL(Z_{n-1}^2)$ can
 produce two distinct $0$-deficit fillings resulting in two distinct
 fillings for $Z_n^2$.   

Since $F_{n-1}^{2}$ is guaranteed to be $0$-deficit by Theorem
\ref{thm:d2_2}, and $Z_{n-1}^2$ is friendly, this implies that
$F_n^3$ is $0$-deficit by induction and Lemma \ref{lem:082}.  
\end{proof}

Theorem \ref{thm:d=3_f2} immediately implies the following more
general theorem.

\begin{theorem}
    \label{thm:d=3_f2-x}
Let $n \geq 7$ and $Z_n^{2}$ be a nonempty  $2$-cycle over $\F_2$ on $K_n^2$. Then 
there  exist at least two  acyclic filling $F_n^3$  of $Z_n^{2}$ of
deficit that is at most $1$. 
\end{theorem}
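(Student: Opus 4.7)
The plan is to dichotomize on whether $Z_n^2$ is friendly in the sense of the definition introduced before Theorem~\ref{thm:d=3_f2}. I will show that the friendly case is already handled by the preceding theorem, and the non-friendly case reduces to it after one step of the procedure $\FILL$, at an added cost of at most one unit of deficit.

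The friendly case is immediate: Theorem~\ref{thm:d=3_f2} itself provides at least two $0$-deficit fillings of $Z_n^2$, and these trivially satisfy the weaker bound of deficit at most $1$. So the work is in the non-friendly case, where every vertex of $V(Z_n^2)$ has the same degree parity in $Z_n^2$.

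In that case I would pick an arbitrary pivot $v_n \in V(Z_n^2)$ and run the procedure $\FILL(Z_n^2, V)$. The crucial tool is Lemma~\ref{lemma:friendly}, which, regardless of whether $Z_n^2$ is friendly, allows me to choose $F_{n-1}^2 = \Fill(\Link(v_n, Z_n^2))$ so that the $Z_{n-1}^2$ produced inside the procedure is friendly; moreover the lemma yields two distinct such choices. Theorem~\ref{thm:d2_2} then guarantees $\deficit(F_{n-1}^2) \le 1$. Assuming $n \ge 8$, so that $n-1 \ge 7$, the recursion continues by applying Theorem~\ref{thm:d=3_f2} to the friendly cycle $Z_{n-1}^2$, producing a $0$-deficit filling $\FILL(Z_{n-1}^2)$. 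The deficit additivity supplied by Lemma~\ref{lem:082} then gives
\[
\deficit(F_n^3) \;=\; \deficit(F_{n-1}^2) + \deficit(\FILL(Z_{n-1}^2)) \;\le\; 1 + 0 \;=\; 1,
\]
and the two distinct choices of $F_{n-1}^2$ lead to two distinct final fillings $F_n^3$, because the $\Cone(v_n, F_{n-1}^2)$ summand in the return statement of $\FILL$ is the only part of $F_n^3$ supported on simplices through $v_n$, so two different $F_{n-1}^2$ force two different $F_n^3$.

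The delicate point I expect is the boundary case $n=7$: there the recursion hits $n-1 = 6$, which sits just below the range of Theorem~\ref{thm:d=3_f2} as stated. Fortunately, as already noted inside the proof of that theorem, the existence of a $0$-deficit filling for a friendly $2$-cycle in fact holds at $n=6$ as well (only the guarantee of \emph{two} distinct such fillings is lost there). Since the two distinct $F_n^3$ in this step are obtained from the two distinct $F_{n-1}^2$ furnished by Lemma~\ref{lemma:friendly}, not from any multiplicity at the lower recursion level, the conclusion of Theorem~\ref{thm:d=3_f2-x} still goes through at $n=7$.
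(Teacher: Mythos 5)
Your argument is correct, and for $n\ge 8$ it coincides with the paper's proof: friendly cycles are dispatched by Theorem~\ref{thm:d=3_f2}, and non-friendly ones by an arbitrary pivot $v_n$ together with Lemma~\ref{lemma:friendly}, Theorem~\ref{thm:d2_2}, the deficit additivity of Lemma~\ref{lem:082}, and the observation that distinct $F_{n-1}^2$ give distinct cones through $v_n$. The only divergence is at $n=7$: the paper simply declares all of $n\le 7$ a base case verified by the computer program of Appendix~\ref{sec:program}, whereas you run the inductive step at $n=7$ and note that only the existence clause (not the two-fillings clause) of Theorem~\ref{thm:d=3_f2} is needed at $n-1=6$, the two final fillings coming from the two choices of $F_{n-1}^2$ supplied by Lemma~\ref{lemma:friendly}; this is legitimate, since the existence clause of Theorem~\ref{thm:d=3_f2} carries no lower bound on $n$ and its validity for $n\le 6$ is explicitly asserted in Claim~\ref{cl:thm:d=3_f2_n<6}. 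Be aware, though, that this does not eliminate the reliance on machine verification --- the small cases of Theorem~\ref{thm:d=3_f2} are themselves established by the same program --- it only narrows what is needed from it (the friendly cycles with $n\le 6$ rather than all $2$-cycles with $n\le 7$).
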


\begin{proof}
The proof is again by induction on $n$. For $n \leq 7$ it follows by
checking finitely many possible cycles which was done by a computer
program, see Appendix  \ref{sec:program}. 
 If 
 $Z_n^2$ is friendly  the assertion follows by from Theorem
\ref{thm:d=3_f2}.  

Assume that $Z_n^2$ is not friendly, and $n \geq 8$.
Assume that for some $v \in
V(Z_n^2)$, $\Link(v, Z_n^2)$ meets the parity condition. Then by Lemma
\ref{lemma:friendly} with respect to $v = v_n$, there is a $0$-deficit
$F_{n-1}^2
= \FILL(\Link(v_n, Z_n^2))$, such the resulting $Z_{n-1}^2$ in the top
recursion level of $\FILL(Z_n^2)$ is friendly. Then by Theorem
\ref{thm:d=3_f2} there are two $0$-deficit fillings $F,F'$ each being
a $0$-deficit filling of $Z_{n-1}^2$. Using each in the top call for
$\FILL(Z_n^2)$ together with $F_{n-1}^2$ we get two corresponding
$0$-deficit fillings for $Z_n^2$.

If $Z_n^3$ is not friendly, we pick an arbitrary $v_n \in V(Z_n^2)$ as a pivot vertex
used in the top recursion   level in $\FILL$. Then Lemma
\ref{lemma:friendly} asserts that $F_{n-1}^2$ will be a $1$-deficit
filling and that $Z_{n-1}^2$ will be friendly. Hence Theorem
\ref{thm:d=3_f2} asserts at least two $0$-deficit filling of $Z_{n-1}^2$
resulting in at least two $1$-deficit filling of $Z_n^2$.
\end{proof}

\subsection{Fillings in dimension larger than $3$}\label{sec:over-q} 

To prove Theorem \ref{th:d>3} our intension is to use induction on the
pair $(d,n)$. The base case for $d
\leq 2$ and any $n$ is proved in Theorem
\ref{theoremQtrees} for $\Q$ and in Theorem \ref{thm:d=3_f2-x} for
$\F_2$ and $d \leq 3$.
We will need a base case for every $d\geq 3$ and some
small $n=n_d$. This is shown in the next claims.

\begin{claim}
  \label{cl:nq}
Let $n= d+2$ and $Z_n^{d-1}$ be a non-empty cycle over $\Q$. Then there are two
distinct fillings for $Z_n^{d-1}$, each of deficit at most $d$.
\end{claim}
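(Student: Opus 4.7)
The plan is to exploit the fact that $n = d+2$ leaves almost no room for $d$-cycles: the complete $d$-complex $K_{d+2}^d$ contains exactly one $(d+1)$-simplex, namely $[n]$ itself, so $\mathcal{Z}_d$ is one-dimensional over $\Q$, spanned by $\partial_{d+1}[n]$. This generator is supported on \emph{all} $d+2$ of the $d$-simplices of $K_n^d$ with coefficients $\pm 1$, so a $d$-chain $F$ is acyclic if and only if at least one of its $d+2$ coefficients vanishes.

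Next, fix any particular filling $F_0$ of $Z^{d-1}$ (one exists because $K_n^d$ is homologically trivial in dimension $d-1$). Every other filling has the form
\[
F_t \;:=\; F_0 \,+\, t\cdot \partial_{d+1}[n], \qquad t \in \Q,
\]
with distinct $t$'s giving distinct chains. Writing $F_t = \sum_{i=1}^{d+2} (c_i + t\epsilon_i)\,\sigma_i$ with $\epsilon_i = \pm 1$, acyclicity of $F_t$ amounts to $t = t_i := -c_i/\epsilon_i$ for some $i \in [d+2]$.

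The one point to verify is that the $d+2$ candidate values $t_1,\dots,t_{d+2}$ are not all equal. If they coincided at a single value $t^*$, then all coefficients of $F_{t^*}$ would vanish simultaneously, i.e.\ $F_{t^*} = 0$; but then $Z^{d-1} = \partial F_{t^*} = 0$, contradicting the hypothesis that $Z^{d-1}$ is non-empty. Hence at least two of the $t_i$'s are distinct, producing two distinct acyclic fillings $F_{t_i}, F_{t_j}$.

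Finally, each such $F_{t_i}$ is non-zero (again because $\partial F_{t_i} = Z^{d-1} \neq 0$), so its support has size at least $1$. Since every acyclic $d$-chain in $K_n^d$ has size at most $\binom{n-1}{d} = d+1$, the deficit of each $F_{t_i}$ is at most $(d+1) - 1 = d$, as claimed. There is no genuine obstacle here — the entire argument is essentially one-dimensional linear algebra in $\mathcal{Z}_d$ — which is fitting, since this Claim is intended only as a base case for the inductive proof of Theorem~\ref{th:d>3}.
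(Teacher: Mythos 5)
Your proof is correct and takes a genuinely different route from the paper's. The paper starts from a representation $Z = \sum_{\sigma} \alpha_\sigma \,\partial_d\sigma$, splits into cases according to whether the support $F = \{\sigma : \alpha_\sigma \neq 0\}$ is all of $K_n^d$ or not, and in each case constructs a second filling by an explicit algebraic substitution (replacing $\partial\sigma$ by $-\sum_{\sigma'\neq\sigma}\partial\sigma'$, or adding a suitable multiple of $\partial_{d+1}[n]$). You instead observe up front that $\dim_{\Q}\mathcal{Z}_d = \binom{d+1}{d+1}=1$, so the full set of fillings is the affine line $\{F_0 + t\,\partial_{d+1}[n] : t \in \Q\}$; since the generator $\partial_{d+1}[n]$ has full support with coefficients $\pm 1$, acyclicity of $F_t$ is equivalent to one of the $d+2$ affine functions $c_i + t\epsilon_i$ vanishing, and the nontriviality of $Z^{d-1}$ prevents all $d+2$ zeros from coinciding. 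This eliminates the paper's case distinction entirely and makes the mechanism transparent: the acyclic fillings are exactly the $\le d+2$ distinguished points on a one-dimensional affine family, and nontriviality forces at least two of them. The paper's version is closer in spirit to the explicit, constructive style used throughout, while yours is the cleaner linear-algebraic statement of the same fact; both correctly bound the deficit by $d$ because any filling of a nonzero cycle has support of size at least one.
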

\begin{proof}
 Every
  $(d-1)$ cycle  $Z=Z_n^{d-1}$ can be written as $Z= \sum_{\sigma \in K_n^d} \alpha_\sigma
  \cdot \partial_d \sigma$, where $\alpha_\sigma \in \Q$ and 
the support of the this sum, $F =\{\sigma ~ | ~ ~ \alpha_\sigma \neq
0 \}$, is not empty. 

Assume first $F \neq K_n^d$, namely that there is
$\tau \in K_n^d \setminus F$. Note that the expression for $Z$ defines
a filling of $Z$ supported on $F$. Further $F$ is acyclic as $|F| \leq {n \choose d+1} -1 =
d+2-1 = d+1$ and the smallest $d$-cycle is of size $d+2$. 

Now to get another acyclic filling, replace for some $\sigma \in F$
the term $\partial \sigma$  with
$-\sum_{\sigma' \in K_n^d, ~\sigma'  \neq \sigma} \sigma' $ in the
expresion for $Z$.  Since
$\partial \sigma  = -\partial (\sum_{\sigma' \in K_n^d, \sigma'  \neq
  \sigma} \sigma' )$ we get again a filling $F'$ of $Z$.   Note that
$\sigma \in F \setminus F'$, where $F'$ is a new support after the
above substitution. In particular $F' \neq F$. Hence the new sum is indeed a
different filling. Further $F'$ is acyclic by the same reasoning as
above, on account of $\sigma \notin F'$ which implies that $|F| \leq d+1$.

If $F = K_n^d$ then up to scaling we may assume that for 
$\sigma = (2, 3, \ldots d+2),~ \alpha_\sigma = 1$. In that case either for every $\tau
\in K_n^d$, $\alpha_\tau$ is identical to the coefficient of $\tau$ in
$\partial _{d+1}(1,\ldots ,d+2)$. In this case $Z
= \partial_d \partial_{d+1} (1, \ldots ,d+2)$ is the trivial cycle.  We
conclude that for some $\tau$, $\alpha_\tau$ is not identical to the
coefficient as defined above. Now one can cancel $\sigma$ from the sum
representing $Z$ 
by adding to the sum expressing $Z$ the expression  $-
\partial_{d+1}(1, \ldots, d+2)$ which is $0$. But $-\partial_{d+1}(1,
\ldots, d+2)$ includes $\sigma$ with coefficient $-1$ and
will cancel $\sigma$ from the sum. 
 Hence, this new sum (of support at moset $d+1$) is an acyclic filling
 of $Z$.

Alternatively getting another acyclic filling is by adding to $Z$ the sum
$-\alpha_{\tau} \partial_{d+1} (1, \ldots ,d+2)$ which will cancel $\tau$ but will not
cancel $\sigma$.

Finally, as the
rank is $d+1$,  the deficit of the fillings is obviously at most $d$.
\end{proof}

A similar claim for $\F_2$ is as follows.
\begin{claim}
  \label{cl:n0}
Let $n= d+2$ and $Z_n^{d-1}$ be a non-empty cycle. Then there are two
distinct fillings for $Z_n^{d-1}$,  each of deficit at most $d$.
\end{claim}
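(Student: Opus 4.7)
The plan is to exploit the extreme sparsity of the $d$-cycle space of $K_n^d$ when $n = d+2$, which over $\F_2$ is forced to be one-dimensional. First I would count: $K_n^d$ contains exactly $\binom{d+2}{d+1} = d+2$ top-dimensional simplices, and the dimension of the space ${\cal Z}_d$ of $d$-cycles is $\binom{n-1}{d+1} = \binom{d+1}{d+1} = 1$. Moreover this one-dimensional cycle space is spanned by $\partial_{d+1}[d+2]$, the boundary of the unique $(d+1)$-simplex on $[n]$, which over $\F_2$ is precisely $\sum_{\sigma \in K_n^d}\sigma$, i.e. the all-ones $d$-chain on $K_n^d$.

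Next I would use the standard coset picture for preimages under $\partial_d$: the set of all $d$-chains $F$ with $\partial_d F = Z$ is a coset of $\ker \partial_d = {\cal Z}_d$. Since $|{\cal Z}_d| = 2$, there are exactly two such fillings, namely some $F$ and its "complement" $F \oplus K_n^d$, obtained by flipping the coefficient of every $d$-simplex. I would then verify that both are nonempty and proper: $F = \emptyset$ would force $Z = 0$, contrary to the hypothesis, and $F = K_n^d$ would give $\partial_d F = \partial_d \partial_{d+1}[d+2] = 0$, again contradicting $Z \neq 0$. Hence both fillings satisfy $1 \le |F|,\, |F \oplus K_n^d| \le d+1$.

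For acyclicity, I would invoke the observation from Step 1 that the unique nontrivial $d$-cycle in $K_n^d$ has support equal to all of $K_n^d$; therefore every proper subset of $K_n^d$ is a $d$-forest. Both of our fillings have support of size at most $d+1 < d+2$, so both are acyclic. Finally, the deficit bound follows immediately: since $\binom{n-1}{d} = \binom{d+1}{d} = d+1$ and each filling has size at least $1$, each has deficit at most $d$, as claimed.

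I do not foresee a genuine obstacle here, since the situation at $n = d+2$ is rigid enough that everything is dictated by the one-dimensional cycle space; the only point to be careful about is the elementary check that neither $F$ nor $F \oplus K_n^d$ degenerates to $\emptyset$ or $K_n^d$ when $Z$ is non-trivial, which we have handled above.
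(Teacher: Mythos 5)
Your proposal is correct and rests on the same facts as the paper's proof: at $n=d+2$ the only nontrivial $d$-cycle over $\F_2$ is the all-ones chain $\partial_{d+1}[d+2]$, so any filling $F$ and its complement $F\oplus K_n^d$ are both proper nonempty subsets, hence acyclic, of size at most $d+1=\binom{n-1}{d}$. The paper reaches the same two fillings by manipulating a representation $Z=\sum_\sigma \partial\sigma$ and swapping one simplex for the sum of the others, while your coset phrasing just makes the count ("exactly two") explicit; the approaches are essentially identical.
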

\begin{proof}
  The proof is almost identical to that over $\Q$, except that the
  case of $F = K_n^d$ in sum expressing the cycle $Z$. In this later
  case, since all non-zero coefficients are $1$, we have that
  $Z=Z_n^{d-1} = \sum_{\sigma \in K_n^d } \partial \sigma$. But this
  is just $0$ (on account of $\partial\partial (1, \ldots d+2) =
  0$). Namely, this case does not need any attention as $Z$ is the
  trivial cycle.
\end{proof}

We now prove the following stronger theorem that implies Theorem \ref{th:d>3}.
\begin{theorem}
  \label{thm:d>3_f2_strong}
There exists a function $c: \N \mapsto \N,  ~ ~ d \longrightarrow c_d$
such that for every  nonempty  $(d-1)$-cycle $Z_n^{d-1}$ over $\F_2$
or over $\Q$, on $K_n^d$, 
there  exist at least two acyclic filling of $Z_n^{d-1}$ each
of deficit at most 
$ c_d \cdot n^{d-3}$. 
\end{theorem}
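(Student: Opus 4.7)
The plan is to argue by double induction, outer on $d$ and inner on $n$, using the generic procedure $\FILL$ together with the additivity formula of Lemma \ref{lem:082}. The base cases are already in hand: over $\Q$ the dimensions $d \leq 2$ are covered by Theorem \ref{theoremQtrees}; over $\F_2$ the dimensions $d \leq 3$ are covered by Theorems \ref{thm:d2_2} and \ref{thm:d=3_f2-x}; and for every $d \geq 3$ the smallest relevant value $n = d+2$ is handled by Claims \ref{cl:nq} and \ref{cl:n0}, each supplying two distinct fillings of deficit at most $d$. Over $\Q$ I would first upgrade to a $d = 3$ base by running one step of $\FILL$: this gives $\deficit(F_n^3) = \deficit(F_{n-1}^2) + \deficit(\FILL(Z_{n-1}^2))$; Theorem \ref{theoremQtrees} kills the first summand once $n-1 \geq 6$, and iterated descent down to Claim \ref{cl:nq} yields an absolute constant $c_3$ over $\Q$ as well.

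For the inductive step, fix $d \geq 4$ and $n > d + 2$, and assume the claim for all strictly smaller dimensions, and within dimension $d$ for all strictly smaller $n$. Given a nontrivial $(d-1)$-cycle $Z_n^{d-1}$, pick any pivot $v_n \in V(Z_n^{d-1})$ and apply the outer hypothesis in dimension $d-1$ to $\Link(v_n, Z_n^{d-1})$ to obtain an acyclic filling $F_{n-1}^{d-1}$ of deficit at most $c_{d-1}\cdot (n-1)^{d-4}$. Lemma \ref{lem:082} then yields
\[
\deficit(F_n^d) \;=\; \deficit(F_{n-1}^{d-1}) \,+\, \deficit\bigl(\FILL(Z_{n-1}^{d-1})\bigr),
\]
and the inner hypothesis bounds the second summand by $c_d(n-1)^{d-3}$. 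Closing the induction reduces to choosing $c_d$ so that $c_d\bigl[n^{d-3} - (n-1)^{d-3}\bigr] \geq c_{d-1}(n-1)^{d-4}$ for every $n \geq d+2$. Since $n^{d-3} - (n-1)^{d-3} = \Theta(n^{d-4})$, the recursive choice $c_d := \max\bigl\{2c_{d-1}/(d-3),\, d\bigr\}$ suffices (the additive constant $d$ absorbs the base case at $n = d+2$), and yields a finite $c_d$ depending only on $d$.

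The main subtlety --- and the one thing beyond routine bookkeeping --- will be to guarantee that the recursive call $\FILL(Z_{n-1}^{d-1})$ is invoked on a \emph{nontrivial} cycle, since otherwise the returned chain is empty and its deficit is the prohibitive value $\binom{n-2}{d}$. This is exactly the issue the paper resolved for $d = 2$ via Claim \ref{cl:many}: once $v_n$ is fixed, the chain $\Star(v_n, Z_n^{d-1}) - Z_n^{d-1}$ is determined, so at most one choice of $F_{n-1}^{d-1}$ can force $Z_{n-1}^{d-1} = Z_n^{d-1} - \Star(v_n, Z_n^{d-1}) + F_{n-1}^{d-1}$ to vanish. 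Since the outer hypothesis hands us two distinct $F_{n-1}^{d-1}$'s, at least one of them keeps $Z_{n-1}^{d-1}$ nontrivial, and we use it. The ``two distinct fillings'' clause then propagates for free: two distinct inner recursive fillings $\FILL(Z_{n-1}^{d-1})$ remain distinct after subtracting the common term $\Cone(v_n, F_{n-1}^{d-1})$. Beyond that, the entire structural content of the argument is captured by the additivity of the deficit under $\FILL$ and the polynomial identity $n^{d-3} - (n-1)^{d-3} = \Theta(n^{d-4})$.
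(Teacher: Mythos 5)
Your proposal is correct and takes essentially the same approach as the paper: a double induction on $(d,n)$ driven by one round of $\FILL$, the additivity of the deficit from Lemma~\ref{lem:082}, and the base cases from Theorems~\ref{thm:d2_2}, \ref{thm:d=3_f2-x}, \ref{theoremQtrees} and Claims~\ref{cl:nq}, \ref{cl:n0}, with the ``at least two fillings'' clause used both to guarantee a nontrivial $Z_{n-1}^{d-1}$ and to propagate itself. You are somewhat more explicit than the paper on the closing of the recursion (the paper just asserts that ``solving the recursion obviously results in a $c_d\cdot n^{d-3}$ deficit filling''), but the structure and key observations are the same.
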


\begin{proof}

The proof is by induction on the pair $(d,n)$. For $\F_2$,  $d\leq 3$ and every
  $n$ it follows from Theorem \ref{thm:d=3_f2-x} and Theorem
  \ref{thm:d2_2}.  For every $d$ and small enough $n$ it follows from
  Claim \ref{cl:n0}.  Similarly,  for $\Q$ and $d \leq 2$ it follows
  from Theorem \ref{theoremQtrees}. Further, for $d \geq 3$ and small
  enough $n$ it follows from Claim \ref{cl:nq}.

  The induction now is identical for both $F_2$ and $\Q$:

Let $Z_n^{d-1}$ be a non-empty $(d-1)$-cycle for $d \geq 4$ for $\F_2$
or $d \geq 3$ for $\Q$. Let $v
\in V(Z_n^{d-1})$ be arbitrary. Then applying $\FILL(Z_n^{d-1})$ with $v_n =v$
in the top level results in $Z_{n-1}^{d-2}$, the corresponding filling
$F_{n-1}^{d-1}= \FILL(Z_{n-1}^{d-2})$ by recursion, and
$Z_{n-1}^{d-1}$. Further, by the induction hypothesis we may assume
that $F_{n-1}^{d-1}$ is of deficit at most $c_{d-1} \cdot (n-1)^{d-4}$
 (or $0$ deficit if $d-1=2$ for $\Q$).
Fix one such filling that results 
in a non-empty $Z_{n-1}^{d-1}$ (there exists one on account of the existence of at
least two distinct fillings $F_{n-1}^{d-1}$ as above). We get by
induction at least two fillings for $Z_{n-1}^{d-1}$ each of size at
most $c_d \cdot (n-1)^{d}$.

Then the filling that is defined by $F_{n-1}^{d-2}$ and each of the
two fillings $F_{n-1}^{d-1}$ in the top level call of
$\FILL(Z_n^{d-1})$ results in a filling with deficit $c_d \cdot
(n-1)^{d-3} + c_{d-1} \cdot (n-1)^{d-4}$. Solving the recursion
obviously results in a $c_d \cdot n^{d-3}$ deficit filling.
\end{proof}

We end this section with the following conjecture that is weaker than
Conjecture \ref{conj:main}. It states that the procedure $\FILL$ can
  always be made to produce a filling with deficit that is independent
  on $n$ but may depend on $d$.

\begin{conjecture}
\label{conj:f2}
There exists a function $\alpha: \N \mapsto \N,  ~ d \mapsto \alpha_d$
such that for every non-trivial $(d-1)$-cycle $Z^{d-1}$ on $K_n^d$
(w.r.t. $\F_2$ or $\Q$), $\FILL(Z_n^{d-1})$ can be made to produce a filling of
deficit at most $\alpha_d$.
\end{conjecture}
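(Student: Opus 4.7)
I would argue by induction on $d$. The base cases are already in hand: Theorems~\ref{theoremQtrees} and~\ref{thm:d2_2} dispose of $d\leq 2$, and Theorem~\ref{thm:d=3_f2-x} handles $d=3$ over $\F_2$, giving $\alpha_2 \leq 1$ and $\alpha_3 \leq 1$ over $\F_2$. So assume the conjecture holds for every dimension $d'<d$, with constants $\alpha_2,\dots,\alpha_{d-1}$.

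Unwinding the additive recurrence of Lemma~\ref{lem:082} down to a base size $n_d$ gives
\[
\deficit(F_n^d) \;=\; \sum_{k=n_d+1}^{n}\deficit\bigl(F_{k-1}^{d-1}\bigr),
\]
where at each level $k$ the summand $F_{k-1}^{d-1}$ is the $(d-1)$-dimensional filling of the link $\Link(v_k, Z_k^{d-1})$ produced by the inner recursive call of $\FILL$. By the inductive hypothesis each summand is already bounded by $\alpha_{d-1}$, but this alone only yields a linear-in-$n$ bound. The task is therefore to show that, by suitable choices of pivots $v_k$ and of the sub-fillings at each level, all but a constant number (depending only on $d$) of the summands can be driven to \emph{zero} deficit.

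To implement this I would generalize the \emph{friendly} notion from the $d=3$ analysis. Call a $(d-1)$-cycle $Z$ \emph{good} if there exists $v\in V(Z)$ such that $\Link(v,Z)$ admits a $0$-deficit acyclic filling via $\FILL$ at dimension $d-1$. Working hypothesis: by the inductive proof, goodness at dimension $d$ is detectable by finitely many combinatorial invariants of $Z$---namely the parity obstruction over $\F_2$ when $d-1$ is even, together with the dimension-specific obstructions extracted from the inductive step at dimension $d-1$. The crux is then a higher-dimensional propagation lemma, analogous to Lemma~\ref{lemma:friendly}: given a good $Z_n^{d-1}$ and a pivot $v_n$ whose link is itself good, use the freedom in choosing $F_{n-1}^{d-1}$ (substantial, by a Claim~\ref{cl:many}-style counting applied inductively) to ensure that the residual cycle $Z_{n-1}^{d-1}$ handed to the outer recursion is again good. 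Once such a propagation lemma is in place, at most constantly many initial ``bad'' levels may occur before the state stabilizes, and their total deficit contribution is a constant depending only on $d$, yielding the desired $\alpha_d$.

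The main obstacle, and the step I expect to be the hardest, is proving this propagation lemma at general $d$. For $d=3$ the invariant at play was a parity imbalance on vertex-degrees, which Claim~\ref{cl:friendly_internal} analyzes via a direct formula relating $\deg(u, Z_{n-1}^2)$ to $\deg(u, \Star(v_n, Z_n^2))$ and the sub-tree structure. For higher $d$ the relevant invariants live on $(d-2)$-faces and involve codegree parities in the sub-filling $F_{n-1}^{d-1}$, controlled by a considerably more intricate linear-algebra computation in $\Link$, $\Star$ and $\Cone$. One would need to (i) classify the ``exceptional'' configurations playing the role of Case~3 of Lemma~\ref{lemma:friendly}, presumably a finite list of highly symmetric complexes at each dimension, and (ii) exhibit, using Claim~\ref{cl:cone_extension}, enough distinct $0$-deficit sub-fillings to realize any prescribed parity pattern on the target invariant. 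Both steps look tractable but combinatorially heavy, and the finite case analyses at each dimension are precisely what should govern the growth of $\alpha_d$ with $d$.
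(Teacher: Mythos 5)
This statement is a \emph{conjecture} which the paper explicitly leaves open: in the concluding remarks the authors list ``proving either Conjecture~\ref{conj:main} or the weaker Conjecture~\ref{conj:f2}'' among the open problems. There is therefore no proof in the paper for you to be compared against, and your submission should not be read as proving the statement.

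As a proof proposal, you have correctly identified the natural strategy---namely, generalizing the ``friendly'' machinery of the $d=3$ case to an inductively defined goodness notion, and proving a propagation lemma in the style of Lemma~\ref{lemma:friendly}. To your credit, you are candid that the propagation lemma at general $d$ is unproven and that you expect it to be the hard part. But that honesty does not close the gaps, and there are in fact two distinct ones. First, your notion of goodness is left at the level of a ``working hypothesis'' (goodness detectable by ``finitely many combinatorial invariants''), which is not an argument but an article of faith; for $d=3$ the paper needs a very concrete invariant (parity imbalance of vertex-degrees) together with the explicit formula of Claim~\ref{cl:friendly_internal} to make it actionable, and you offer no analogue at higher $d$. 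Second, and more subtly, your write-up of what the propagation lemma should say---``given a good $Z_n^{d-1}$ and a pivot $v_n$ whose link is itself good, \ldots ensure that $Z_{n-1}^{d-1}$ is again good''---is too weak to yield a constant bound on the deficit. You then assert without argument that ``at most constantly many initial `bad' levels may occur before the state stabilizes.'' But a lemma that only preserves goodness cannot do this; you need, as Lemma~\ref{lemma:friendly} actually does for $d=3$, a statement that from an arbitrary (possibly bad) state one can force the next level to be good, paying at most $O(1)$ in deficit along the way. Without a proof of that stronger form, your $\sum \deficit(F_{k-1}^{d-1})$ could in principle pick up $\alpha_{d-1}$ at every level, giving a linear (or worse) bound in $n$ rather than the constant $\alpha_d$ you claim. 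In short: your strategy is sensible and matches the direction the authors gesture toward, but the propagation lemma and the ``only constantly many bad levels'' assertion are precisely the content of the conjecture, and neither is established here.
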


\ignore{
  A way to prove Conjecture \ref{conj:f2} is to generalize the
definition of 'friendly' cycle to higher dimensions, and generalize
Lemma \ref{lemma:friendly} to show that $F_{n-1}^{d-1}$ can always be
produced so to result in a friendly $Z_{n-1}^{d-1}$ in the top level
call of $\FILL$.  One such generalization is an inductive one:  a
$d$-cycle $Z_n^{d-1}$ is a $d$-friendly cycle if contains a vertex $v$
such that $\Link(v,Z_n^{d-1})$ is $(d-1)$-friendly.  However, for this
``implicit'' definition we do not have a suitable generalization of the
corresponding lemma.  
}%ignore

\section{\boldmath{On the maximum size of a simple $d$-cycle on
    $[n]$}}\label{sec:large-cycles}

Here we use the results in Sections  \ref{sectionFillings} and
\ref{sec:d>3} to show the
existence of large simple $d$-cycles.  As explained in the
introduction, for the very simple case of $d=1$, Hamiltonian cycles,
namely simple cycle of the maximum possible size of $r(n,1)+1 = n$
exist for very $n\geq 3$.  For $d \geq 2$ this was  open.

Let  $\sigma$ be a $d$-simplex.  Recall that for an acyclic
$d$-filling $F^{(d)}$ of the $(d-1)$-cycle
$\partial\sigma$,   the $d$-chain $F^{(d)} - \sigma$ is 
a simple $d$-cycle. Conversely, for a simple $d$-cycle $Z$ and $\sigma
\in Z$, $Z - \sigma$ is an an acyclic $d$-filling of $\partial\sigma$.    
Thus, Theorem \ref{th:d>3}, immediately imply
the existence of large simple $d$-cycles in $K_n^d$ over $\F_2$ and
over $\Q$. This is not, however, likely to be tight.

The existence of the extreme case, that is, Hamiltonian cycles, or tighter
results are of particular interest. We next sum up the consequences of
Theorem \ref{th:d>3}  in Theorem \ref{th:hamiltonian} below.

\begin{theorem}
\label{th:hamiltonian} $\mbox{}$

For $d=2$, over $\F_2$, Hamiltonian $2$-cycles on $[n]$ exist  if and only if 
$n \equiv 0\, { or }\, 3\, (mod\, 4)$. Over $\Q$, they exist for all $n\geq 4$ with 
exception of $n=5$. In all cases there exist simple cycles of deficit $\leq 1$.

For $d \geq 3$, over $\F_2$ as well as over $\Q$, 
there exist simple $d$-cycles on $[n]$ with deficit  $O(n^{d-3})$.    $\qed$
\end{theorem}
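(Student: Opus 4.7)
The plan is to reduce the question about large simple $d$-cycles to the filling theorems already proved in the paper. The key bijection is the following: for any $d$-simplex $\sigma$, an acyclic $d$-filling $F$ of $\partial \sigma$ with $\sigma \notin \supp(F)$ yields a simple $d$-cycle $Z = F \pm \sigma$ (with the sign interpreted according to the field) of size $|F|+1$, and conversely every simple $d$-cycle containing $\sigma$ arises in this way after deleting $\sigma$. Consequently the deficit of $Z$ as a cycle, measured against the upper bound $\binom{n-1}{d}+1$, equals the deficit of $F$ as a filling, and the whole task reduces to controlling the filling deficit for the specific $(d-1)$-cycle $\partial \sigma$.

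For $d=2$ over $\F_2$, I would fix an arbitrary $2$-simplex $\sigma$ and apply Theorem \ref{thm:d2_2} to the $1$-cycle $\partial \sigma$ of size $3$. The parity condition on $\partial \sigma$ reads $3 \equiv \binom{n-1}{2} \pmod 2$, i.e.\ $\binom{n-1}{2}$ is odd, and a routine case analysis on $n \bmod 4$ shows this is equivalent to $n \equiv 0,3 \pmod 4$. In these cases Theorem \ref{thm:d2_2} yields a $0$-deficit filling, producing a Hamiltonian $2$-cycle; in the remaining cases $n\equiv 1,2 \pmod 4$ the parity obstacle itself rules out any Hamiltonian $2$-cycle (such a cycle would give a forbidden $0$-deficit filling of $\partial \sigma$), while Theorem \ref{thm:d2_2} still guarantees a deficit-$1$ filling and hence a simple $2$-cycle of deficit $1$.

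For $d=2$ over $\Q$, I would apply Theorem \ref{theoremQtrees} to the uniformly weighted oriented triangle $\partial \sigma$. For $n=4$ the sole exception is $C_4$, so $\partial \sigma$ still admits a $0$-deficit filling (here the Hamiltonian $2$-cycle is nothing but all of $K_4^{(2)} = \partial [1234]$). For $n=5$ the exception is precisely $C_3 = \partial \sigma$, so only a deficit-$1$ filling is guaranteed, and no Hamiltonian $2$-cycle can exist because such a cycle would produce a forbidden $0$-deficit filling of $C_3$; the deficit-$1$ filling nevertheless supplies a simple $2$-cycle of deficit $1$. For $n \geq 6$, every nontrivial $1$-cycle, and in particular $\partial \sigma$, has a $0$-deficit filling, hence a Hamiltonian $2$-cycle exists.

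For $d \geq 3$, I apply Theorem \ref{th:d>3} directly to $\partial \sigma$ over either field; the resulting acyclic filling has deficit $O(n^{d-3})$, and attaching $\sigma$ produces a simple $d$-cycle of the same deficit. Overall the argument is essentially a repackaging of the filling results via the $Z \leftrightarrow F + \sigma$ correspondence; the only step that requires a real calculation is the parity verification identifying the residues $n \equiv 0,3 \pmod 4$ with the Hamiltonian regime in dimension two over $\F_2$, and the only subtlety is making sure the exceptional cycles of Theorem \ref{theoremQtrees} are correctly matched with (or ruled out for) the boundary $\partial \sigma$.
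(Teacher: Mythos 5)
Your proposal is correct and follows essentially the same route as the paper: the paper treats Theorem~\ref{th:hamiltonian} as an immediate consequence of the filling results via the correspondence $Z \leftrightarrow F\pm\sigma$, which it sets up just before stating the theorem, and you spell out exactly that reduction, including the parity computation $\binom{n-1}{2}$ odd $\Leftrightarrow n\equiv 0,3 \pmod 4$ and the matching of the exceptional cycles $C_3$, $C_4$ of Theorem~\ref{theoremQtrees} against $\partial\sigma$.
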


We next consider $3$-dimensional cycles over $\F_2$. The tighter
Theorem \ref{thm:d=3_f2-x} immediately implies that there are simple
$3$-cycles of size $r(n,3) = {n-1 \choose 2}$, namely of size
$1$-short of being Hamiltonian. This by the discussion above, and the
fact that for a $3$-dim simplex $\sigma$, there is a $1$-deficit
filling of $\partial_3 \sigma$.

Note that for every $v \in Z_n^2$, $|\Link(v, \partial_3 \sigma)| =
3$, hence $\partial_3 \sigma$ is not friendly. Therefore  Theorem
\ref{thm:d=3_f2} is not applicable to yield  a tighter  $0$-deficit
filling of $\partial_3 \sigma$ and, in turn, a  Hamiltonian $3$-cycle.
  However, the only need of being friendly in the proof of Theorem
  \ref{thm:d=3_f2},  is to be able to choose $v=v_n$ for the top
  level call of $\FILL(Z_n^2)$, so that the parity condition holds for the $1$-dim
  cycle $Z_{n-1}^1 =  \Link(v_n, Z_n^2)$. In our case for $Z_n^2 =
  \partial_3 \sigma$, and as remarked above $\Link(v,Z_{n-1}^2) = 3$
  for every $v \in V(Z_{n}^2)$. Hence whenever
  ${{n-2} \choose 2} \equiv 1  ~({\rm mod}\;2)$ it has the parity
  condition, and $v$ could be taken so that $\FILL(Z_{n-1}^1)$ is
  $0$-deficit, resulting in a $0$-deficit filling of $Z_n^2$. This
  implies, in turn,  a Hamiltonian $3$-cycle. We sum this in the
  following corollary.
  
  \begin{corollary}
    \label{cor:ham-d=3}
    For every $n \equiv 0,1 ~ (\bmod 2), $ and $ n\geq 7$,  there is a
    $3$-Hamiltonian cycle in $K_n^3$ with respect to $\F_2$.
  \end{corollary}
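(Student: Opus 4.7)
The plan is to derive the corollary as a specialization of Theorem~\ref{thm:d=3_f2} to the specific $2$-cycle $Z_n^2 := \partial_3\sigma$, for a fixed $3$-simplex $\sigma \subseteq [n]$. Recall that if $F_n^3$ is a $0$-deficit acyclic $3$-filling of $\partial_3\sigma$, then $F_n^3 \oplus \sigma$ is a simple $3$-cycle of size $\binom{n-1}{3} + 1$, i.e.\ a Hamiltonian $3$-cycle. So the task reduces to producing such an $F_n^3$.

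First, I would observe that $V(Z_n^2) = \sigma$ consists of $4$ vertices each of degree $3$ (odd), hence $\partial_3\sigma$ is \emph{not} friendly and Theorem~\ref{thm:d=3_f2} does not apply directly. The workaround, hinted at in the paragraph preceding the corollary, is to perform a single top-level step of the procedure $\FILL(Z_n^2, V)$ with an arbitrary pivot $v_n \in \sigma$. The link is $Z_{n-1}^1 = \Link(v_n, \partial_3 \sigma) = \partial_2(\sigma \setminus \{v_n\})$, the triangular $1$-cycle on $\sigma \setminus \{v_n\}$, so $|Z_{n-1}^1| = 3$. For $\FILL(Z_{n-1}^1)$ to be $0$-deficit (by Theorem~\ref{thm:d2_2}), we need the parity condition $3 \equiv \binom{n-2}{2} \pmod 2$, which holds iff $\binom{n-2}{2}$ is odd, i.e.\ iff $n \equiv 0, 1 \pmod 4$ --- the content of the corollary's hypothesis (the stated ``$\pmod 2$'' being evidently ``$\pmod 4$'', as the $\binom{n-2}{2}$ remark immediately preceding the corollary makes clear).

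Second, under this parity hypothesis I would invoke Lemma~\ref{lemma:friendly} to choose $F_{n-1}^2 = \FILL(Z_{n-1}^1)$ so that the resulting
\[
Z_{n-1}^2 \;=\; Z_n^2 \;\oplus\; \Star(v_n, Z_n^2) \;\oplus\; F_{n-1}^2
\]
is a \emph{friendly} $2$-cycle on $V \setminus \{v_n\}$. Here one enters Case~2 of the lemma's proof: the function $A(\cdot)$ is constantly $1$ on $V(Z_{n-1}^1) = \sigma \setminus \{v_n\}$ and $0$ off $\sigma$, so for $n \geq 7$ we may pick $u, u' \notin \sigma$ with $A(u) \equiv A(u') \pmod 2$ and $(u,u') \notin Z_{n-1}^1$, and the construction in that case yields a friendly $Z_{n-1}^2$ with $F_{n-1}^2$ of the deficit dictated by parity, which here is $0$.

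Third, since $Z_{n-1}^2$ is friendly and $n-1 \geq 6$, Theorem~\ref{thm:d=3_f2} produces a $0$-deficit acyclic filling $F_{n-1}^3 = \FILL(Z_{n-1}^2)$. The additivity of deficits from Lemma~\ref{lem:082} gives $\deficit(F_n^3) = \deficit(F_{n-1}^2) + \deficit(F_{n-1}^3) = 0$, so $F_n^3 + \sigma$ is the desired Hamiltonian $3$-cycle. The main obstacle is precisely reconciling that $\partial_3 \sigma$ is not friendly with the need to invoke Theorem~\ref{thm:d=3_f2}; this is overcome by noting that friendliness in Theorem~\ref{thm:d=3_f2} is only used to make the link's parity condition achievable by some choice of pivot, whereas for $\partial_3 \sigma$ \emph{every} pivot yields the same (size-$3$) link, and the parity condition is forced by the residue of $n \bmod 4$ rather than by friendliness.
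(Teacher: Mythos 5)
Your proof is correct in outline and takes essentially the same route as the paper's: run one level of $\FILL$ at an arbitrary pivot $v_n \in \sigma$; observe that the link $\Link(v_n,\partial_3\sigma)$ always has size $3$, so the parity condition for $\FILL(Z_{n-1}^1)$ reduces to $\binom{n-2}{2}$ being odd (your reading of the stated ``$\pmod 2$'' as an evident typo for ``$\pmod 4$'' is correct); invoke Lemma~\ref{lemma:friendly} to make $Z_{n-1}^2$ friendly; apply Theorem~\ref{thm:d=3_f2}; and sum deficits via Lemma~\ref{lem:082}. This is exactly the reasoning in the paragraph preceding the corollary, made explicit.

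One small misstep in your case analysis: you say one ``enters Case~2'' of the proof of Lemma~\ref{lemma:friendly} by choosing $u,u'\notin\sigma$, but the vertex $u=v_{n-1}$ must lie in $V(Z_{n-1}^1)=\sigma\setminus\{v_n\}$, since it is the pivot for $\FILL(Z_{n-1}^1)$ (and Claim~\ref{cl:case_2.1} is stated for $u,u'\in V(Z_{n-1}^1)$). Since $Z_{n-1}^1$ is a single triangle, any two vertices of $V(Z_{n-1}^1)$ are adjacent and $A(\cdot)$-monochromatic, so neither Case~1 nor Case~2 applies; the relevant branch is Case~3 (one monochromatic clique). Because you use Lemma~\ref{lemma:friendly} only as a black box at the statement level, this misidentification does not affect the validity of your argument, but it is worth correcting.
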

  \begin{proof}
    For such $n$ the parity condition for the $2$ cycle $\partial_3
    \sigma$ with respect to $n-1$ holds, and hence there is a
    $0$-deficit filling of it resulting in a Hamiltonian cycle as explained above.
  \end{proof}

In what follows we focus no $\F_2$ and discuss some non-trivial upper bounds for the
largest simple cycles when $n$ is relatively small with respect to
$d$.

 By a standard duality argument (see e.g.,~\cite{BjornerTancer}), there is 
a size- and deficit-preserving 1-1 correspondence between the $(n-d-2)$-hypercuts 
(= simple $(n-d-2)$-cocycles), and the simple $d$-cycles in $K_n^{n-1}$. 
In~\cite{LNPR}, the authors discuss lower bounds on the deficit of $k$-hypercuts
in $K_n^k$. In particular, it holds that:
$\mbox{}$ {\em

The deficit of the largest $2$-hypercut in $K_n^2$ is $n^2/4 - O(n)$.

For any odd $k$,  the deficit of largest $k$-hypercut  
is at least ${{n-1} \choose k} \left( {{n} \over {(k+1)^2}} - 1 \right)$. 
(This holds for general $k$-cocycles as well.)
}

Combining these results with the above duality, and setting $k=n-d-2$, one
arrives at the following results about the deficits of $d$-cycles:
\begin{claim}
\label{cl:deficit}$\mbox{}$

The deficit of the largest simple $d$-cycle in $K_{d+4}^d$
is ${1\over 4}d^2 - O(d)$.

The deficit of any $d$-cycle in $K_{d+k+1}^d$, $k$ odd, is at least 
${{k+d+1} \choose k} \left( {{k+d+2} \over {(k+1)^2}} - 1 \right)$.
\end{claim}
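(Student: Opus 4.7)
The plan is to derive both estimates directly from the Bj\"orner--Tancer-type duality recalled in the paragraph preceding the claim, combined with the two deficit bounds on $k$-hypercuts from~\cite{LNPR} that are quoted there. No new combinatorial construction is required; the proof is essentially a translation of each hypercut statement into a $d$-cycle statement via the size- and deficit-preserving bijection, so the work amounts to matching up the indices $d$, $k$, and $n$ and substituting.

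For the first assertion I would specialise the duality to hypercut dimension $k=2$. The relation $k = n-d-2$ then forces $n = d+4$, so the deficit of the largest simple $d$-cycle in $K_{d+4}^d$ coincides with the deficit of the largest $2$-hypercut in $K_{d+4}^2$. Plugging $n = d+4$ into the LNPR estimate $n^2/4 - O(n)$ gives $(d+4)^2/4 - O(d+4)$; expanding and absorbing the linear terms into the error yields $\tfrac{1}{4}d^2 - O(d)$, which is exactly what the claim asserts.

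For the second assertion I would apply the general LNPR lower bound ${n-1 \choose k}\bigl(\tfrac{n}{(k+1)^2}-1\bigr)$, valid for odd $k$, to the $k$-hypercut dual to an arbitrary $d$-cycle on the appropriate number of vertices. Because the duality is deficit-preserving, and because the LNPR estimate is a lower bound on the deficit of every minimal $k$-cocycle (not merely the largest), the same lower bound transfers to the deficit of every $d$-cycle on the dual side. Substituting the correspondence $n = d+k+2$ dictated by $k = n-d-2$ turns ${n-1 \choose k}$ into ${k+d+1 \choose k}$ and $\tfrac{n}{(k+1)^2}$ into $\tfrac{k+d+2}{(k+1)^2}$, producing precisely the expression in the statement.

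The step that requires care — and is really the only obstacle — is bookkeeping: one must align $d$, $k$, and $n$ consistently through the duality, and recognise that the LNPR statement about the ``largest'' hypercut really encodes a universal lower bound on every minimal $k$-cocycle, which, via the deficit-preserving bijection, becomes a universal lower bound on every $d$-cycle on the dual side. Once this is granted, both parts are a one-line substitution.
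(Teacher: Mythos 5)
Your proposal is correct and reproduces exactly the paper's own (informal) argument, which is the discussion immediately preceding the claim: invoke the Bj\"orner--Tancer duality, quote the two LNPR bounds on hypercut deficits, and substitute $k = n-d-2$. Note that your substitution $n = d+k+2$ in the second part reveals that the paper's $K_{d+k+1}^d$ is almost certainly a typo for $K_{d+k+2}^d$, since only the latter makes the binomial ${{k+d+1} \choose k}$ and the factor $\frac{k+d+2}{(k+1)^2}$ come out right.
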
 
\begin{corollary} \label{cl:cor13}$\mbox{}$

For a large $d$ and an odd $k \approx \sqrt{d}-1$, the deficit of any
$d$-cycle in $K_{d+k+1}^d$ is at least $\; (d/e)^{0.5\sqrt{d} - O(1)}$. 
\end{corollary}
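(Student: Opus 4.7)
The proof is essentially a direct plug-and-estimate exercise from the second part of Claim~\ref{cl:deficit}. The key choice is of the parameter $k$: I would take $k$ to be the largest odd integer not exceeding $\sqrt{d}-1$, so $k = \sqrt{d} - O(1)$, the parity hypothesis of the claim is satisfied, and $(k+1)^2 \leq d$ forces
\[
\frac{k+d+2}{(k+1)^2} - 1 ~=~ \frac{d+1-k^2}{(k+1)^2}
\]
to be strictly positive and of order $\Theta(1/\sqrt{d})$, using $d+1-k^2 = \Theta(\sqrt{d})$ and $(k+1)^2 = \Theta(d)$.

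The main step is then to estimate the binomial coefficient $\binom{k+d+1}{k}$. Using the elementary inequality $\binom{n}{k} \geq (n/k)^k$ (valid for $n \geq k$, since each factor $(n-i)/(k-i) \geq n/k$) with $n = k+d+1 \approx d$ and $k = \sqrt{d} - O(1)$ yields
\[
\binom{k+d+1}{k} ~\geq~ \left(\frac{k+d+1}{k}\right)^{k} ~\geq~ (\sqrt{d})^{\sqrt{d} - O(1)} ~=~ d^{\,0.5\sqrt{d} - O(1)}.
\]
Multiplying by the $\Theta(1/\sqrt{d})$ prefactor from the previous step gives a deficit lower bound of $d^{\,0.5\sqrt{d} - O(1)}$, and since $(d/e)^{0.5\sqrt{d}} = d^{0.5\sqrt{d}}/e^{0.5\sqrt{d}}$, this is comfortably at least $(d/e)^{0.5\sqrt{d}-O(1)}$ as required (indeed stronger by a factor on the order of $e^{0.5\sqrt{d}}$).

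I do not foresee a conceptual obstacle: the entire argument is asymptotic bookkeeping built on top of the already-established Claim~\ref{cl:deficit}. The only small nuisance is the oddness restriction on $k$, which shifts its value by at most $1$ and is absorbed into the $O(1)$ of the exponent; and one should choose $k$ just below (rather than exactly at) $\sqrt{d}$ so that both the binomial factor is large and the multiplicative factor $(k+d+2)/(k+1)^2 - 1$ stays positive.
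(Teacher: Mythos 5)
Your proposal is correct and is the natural (indeed the only sensible) asymptotic plug-in of Claim~\ref{cl:deficit}, which is all the paper intends. One tiny slip: $\frac{k+d+2}{(k+1)^2}-1=\frac{d+1-k-k^2}{(k+1)^2}$ rather than $\frac{d+1-k^2}{(k+1)^2}$, but the omitted $-k$ term is of lower order and the numerator is still $\Theta(\sqrt{d})$ under your choice of $k$, so the conclusion stands; in fact your lower bound $d^{0.5\sqrt{d}-O(1)}$ is slightly stronger than the $(d/e)^{0.5\sqrt{d}-O(1)}$ claimed.
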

%
%Interestingly, the Claim~\ref{cl:deficit} apparently does not have an analogue over $\Q$. 
%More precisely, another result of~\cite{LNPR} translates (via duality) to a statement 
%that there are infinitely many $d$'s such that $K_{d+4}^d$ has a Hamiltonian $d$-hypertree over %$\Q$, unless a well known and widely believed (and also implied by the Riemann hypothesis)
% conjecture of Artin about the primitive roots is false.
%
%%%%%%%%%%%%%%%%%%%%%%%%%%%%
%%%%%%%%%%%%%%%%%%%%%%%%%%%%%%%%%%%%%%%%%%%%%%%%
%
%
% !!!!!!!!!!!!!!!!!!!!!!!!!!!!!!!!!!!!!!!!!!!!!!!!!!!!!!!!!!!!!!!!!!!!!!!!!!!!!!!!!!!!!!!!!!!!!!!!!!!!!!!!!!!1
%
%%
%%%%%%%%%%%%%%%%%%%%%%%%%%%%%%%%%%%%%%%%%%%%%%%%
%\section{\boldmath{Large Average-Size Fillings over $\F_2$}}

\section{Concluding remarks}
We have shown that for every $d$ and large enough $n$ there is a large
acyclic $d$-filling of any $(d-1)$-cycle.  For the case $d\leq 2$ this
is completely closed (over $\F_2$ and over $\Q$).  In particular, this
shows the existence of very large simple $d$-cycles.  The extremal case
of Hamiltonian cycle is solved completely for $d \leq 2$. For
$d=3$ over $\F_2$, we have shown  the existence of Hamiltonian cycles
for an infinite sequence of $n$'s. However, the existence of
Hamiltonian cycles for higher dimensions  is open at
large. Currently we do not even see a method of approaching the
problem. This poses one major open problem.

Other related open problems are proving either Conjecture
\ref{conj:main} or the weaker Conjecture \ref{conj:f2}. 

Another interesting point that follows from the discussion in this paper concerns the
existence of non-collapsible trees.

A $(d-1)$-simplex $\tau$ of a pure $d$-dimensional simplicial complex
$X$ is called {\em exposed} if its degree is $1$, that is, it belongs
to exactly one $d$-simplex $\sigma$ of $X$. An elementary $d$-collapse
on an exposed $\tau$ as above, consists of the removal of $\sigma$ and $\tau$ from $X$. The
complex $X$ is {\em collapsible to its $(d-1)$-skeleton} if every
$d$-simplex of $X$ can be removed by a sequence of elementary
collapses of $(d-1)$-facets. It is easy to see that if $X$ is
collapsible to its $(d-1)$-skeleton, then $X^{(d)}$ is
acyclic over any field.  Is the inverse true?  For $d=1$ this is true;
the fact that every acyclic graph is collapsible is  identical to
the fact that every non-empty acyclic graph contains a vertex of
degree $1$ (a leaf).

The existence of non-collapsible trees (over $\F_2$ and over $\Q$) was
known, cf. \cite{}. A consequence of our results is a
construction of non-collapsable $d$-trees for $d=2,3$. In fact the trees
that we construct do not have any exposed $d-1$ simplex. The way to
construct such trees, is to construct a Hamiltonian cycle $Z$, namely in
which no exposed $(d-1)$-simplex exists. Further, to observe that for some
$d$-simplex $\sigma$ in it, any $\tau \in \partial \sigma$ appears
with multiplicity at least $4$. Hence, removing $\sigma$ from $Z$ will
result a tree in which there is no exposed simplex.

{\bf Acknowledgement: } We wish to thank Ofer Magen and Yuval Salant for
writing and running the computer program that checked the $\F_2$-cases
of $n=6,7$ and $d=2$.  The source of this program can be found in
Appendix section\ref{sec:program}.

%\bibliography{../../deepak}

\clearpage
\appendix{\Large{\bf Appendix}}

\section{Case analysis for $d=2$ over $\Q$ and $n \leq 6$, for the
  base cases of Theorem \ref{theoremQtrees}}\label{sec:d2-Q}

\ignore{
We start with a remarks:
%\begin{enumerate}
%\item 
$Z_k^1$ never has a vertex of degree $1$, namely adjacent to
  only one edge of non-zero weight, as the edge weight cannot be
  cancelled. 
%\end{enumerate}

Further, let $Z_r^0$ be a $0$-cycle  over $V_r$ (this is a weighting
of $V_r$ that sum up to $0$). Let $T$ be any spanning tree $T$ over $V_r$.
Recall that there always exist a unique filling $F_r^1$ of $Z_r^0$ that is
supported on $T$. 
\begin{claim}
\label{cl:0-edge}
An edge $e\in T$ will have weight $0$ in $F_r^1$ if and only if the total weight of the {\em vertices} in (any) connected component of $T\setminus {e}$ sums up to $0$.
Consequently, if no such edge exists in $T$, then the corresponding
$F_r^1$ has $0$-deficit.
\end{claim}
\begin{proof}
  Working from the leaves of $T$ ``inwards'' the weights of a filling
  $F_r^1$ are determined. Hence if for some subtree the weights sum up to
  $0$, all edges in the corresponding cut will have weights $0$. The
  inverse is similar.
\end{proof}
}%ignore

% $n=3$: there is, up to weights, a unique nontrivial
% $1$-cycle, which is a (directed) $C_3$, and a unique nontrivial $2$-chain $\sigma$,
% and $\partial \sigma = C_3$.
% \\ $\mbox{}$

We identify here a $1$-cycle $C$ with a weighted directed graph
$C=(V,E)$ in which for every $v \in V(C)$, $\sum_{(x,v)\in C} w(x,v) -
\sum_{(v,y)\in C} w(v,y) = 0$. 
\begin{description}
\item[$n=3$]

This case is essentially empty: The unique $1$-cycle is
$\partial(\sigma)$ for $\sigma$ being the unique $2$-dim
simplex. Hence $\sigma$ is the required $0$-deficit 
filling.

\item[$n=4$]  
In general all possible cycles (up to isomorphism and scaling of weights) are of the form $G_2
= \partial(123) + a\cdot \partial(234) + b\cdot
\partial(124)$, for any possible $a,b \in \R$.  It is easy to see that
for $a=-1, b=0$ one get $C_4 = \partial(1,2,3) - \partial(1,2,4)  =
-\partial(2,3,4) - \partial(1,3,4)$. Hence the right hand side forms a
filling of size $2$ which a $1$-deficit filling. For every other
setting of $a,b$ (that is not isomorphic) it is easy and left for the
reader to verify that $C$ is a sum of boundary of three simplices -
hence a $0$-deficit filling.

\item[$n=5$] 

  In this case the collection of cycles is much larger. 
We refer to two main cases according to whether there is a vertex
$v$ in the cycle with exactly two adjacent edges, or the case where
all vertices in the cycle are
adjacent to at least $3$ edges.

{\bf case1: } The first case in which there is a vertex $v$ of degree $2$
in $C$: we choose $v_5 =v$ and apply
$\FILL$.  Since $v$ has two adjacent edges, it follows that one is
incoming, the other is outgoing, both with the same weight which is
w.l.o.g. $1$. Assume these are the directed edges $(4,5), (5,1)$. Then
calling the $\FILL((\Link(v_5,C), ~ [4])$  would find $F_4^1$ which is a
weighted Hamiltonian path on $4$ vertices from $4$ to $1$
carrying a weight $1$ (and there are $2$ such different paths). Now $Z_5^1 - \Star(5,Z_5^1)$ is a flow network
carrying a total of $1$ flow from $1$ to $4$. Hence (by simple flow
argument) either this flow is along a simple path of length $3,2$ or $1$.
Namely $Z_5^1 = C_5,C_4$ or $C_3$. In the first two cases  $F_4^1$ can
be taken so not to cancel this
path which will result in the cycle $Z_4^1$ that is not $C_4$ and 
hence a $0$-deficit will be constructed for it and for $C$.

The  problematic case above is when the flow is along one path of
length $1$.  Namely $Z_5 = C_3$. In this case, of the two possible
$F^{(1)}_4$, one
results in an empty cycle and the other with $Z_4^1 = C_4$ which will result in a $1$-deficit filling.
Hence for $C_3$ we end up in a $1$-deficit filling. Moreover, this
is best possible as it can be seen that any acyclic set on $K_5^2$ is
a construction as described in
Claim~\ref{cl:cone_extension}. It follows that  if there were a $0$-deficit
filling for $C_3$ it would be also achieved by $\FILL$. 

We conclude that there is
no such filling for $C_3$. We also note that there are several $1$-deficit
fillings.

Another subcase is when the above $1$-flow
from $1$ to $4$ is not on a simple path. It then can be split into two or more distinct paths. In that case,
again, the resulting graph $Z_4^1$ can be made not to be $C_4$ resulting in a $0$-deficit.

{\bf case 2:} The other case that is left is where every vertex in $Z_5^1$ is
adjacent to at least $3$ edges. Assume first that there is a vertex
$v$ that is adjacent to exactly $3$ edges. Assuem w.l.o.g that $v=5$ and
choose $v_5=5$ in $\FILL$. Hence$F_4^1$ is a tree whose boundary is the
three neighbours of $v_5$, which are w.l.o.g. $\{1,2,3\}$. Then any
tree on $[4]$ with $4$ being non-leave vertex can be made to be a
suitable $F_4^1$. Each will result a different labeled $Z_4^1$.  Since
there are $7$ such trees, with only $6$ possible labeled $C_4$ at
least one will result $Z_4^1 \neq C_4$ and the case $n=4$ will
guarantee a $0$-deficit filling.  (We note that none of the possible
$F^{(1)}_4$ will result in an empty cycle on account that all vertices in
$Z_5^1$ have degree at least $3$).

Finally, in the case of every vertex in $Z_5^1$ of degree $4$ makes
$Z_5^1$ a weighted orientation of $K_5$. In that case $Z_4^0 =
\Link(5,Z_5^1)$ is a weight $\{a,b,c,d\}$ on $[4]$ with $a+b+c+d=0$
and $F_4^1$ is a
weighted tree whose net weight on $[4]$ is as above. If for no proper
set of $[4]$ the weights sum to $0$, it is easy to see that any tree
on $4$ vertices can be weighted be a $0$-filling of the weighted
$Z_4^0$ as above. There are $16$ such trees and only $7$ forbidden
configurations for $Z_4^1$ (the 6 labeled $C_4$ + the empty cycle). As each tree results in a different
labeled configuration, at least one will result in a good $Z_4  \neq
C_4$ for the next recursion level. 

If, on the other hand $Z_4^1$ is the weighting $(1,-1,a,-a)$ then it
can be seen that $F_4^1$ being any of the $4$ stars can be weighted to
be a filling of the above. More over, it can be seen that at least one
of these possibilities will either result in $Z_4^1$ not being
$C_4$. Again, by the case of $n=4$ this will result in a $0$-deficit
filling for $C$.
 
\item[$n=6$]

Analysis in the spirit of $n=5$ is simpler here. If there is vertex
$v$ in
$Z_6^1$ that is adjacent to two edges we chose $v_6=v$ in $\FILL$. 
Then w.l.o.g $G=Z_6^1 - \Star(6,Z_6^1)$ is a flow network
carrying a total of $1$ flow from $1$ to $5$. Hence $F_5^1$ must be a
Hamiltonian path from $5$ to $1$. Here if $|V(Z_6^1)| \leq 5$ it is
immediate that such path (in fact at least two paths) can be taken to result in nonempty $Z_5^1$
having a vertex of degree $3$ (or more) and hence not $C_3$. It can
also be verified that if $Z_6^1 = C_6$ the same can be forced as well.

If the flow network defined by $G$ is a union of two or more distinct
path, again, the same holds, by the freedom we have due to the
relatively large number of
Hamiltonian paths.

Otherwise, if every vertex in $Z_6^1$ is adjacent to at least $3$
edges, and there is a vertex adjacent to exactly $3$ edges $v$ we set
$v_6=v$ in $\FILL$.  Assume that $V\setminus \{v\} = [5]$ and that $v$
is adjacent to $1,2,3$, then any tree in which $4,5$ are not leaves
can serve as $F_5^1$ (with a corresponding uniquely define
weighting). As there are $30$ such labeled trees and only ${5 \choose
  3}=10$ labeled $C_3$ there is at least two trees that will produce
an non-empty $Z_5^1 \neq C_3$.

If all vertices in $Z_6^1$ have degree $4$ or more, then $Z_6^1$ has
at least $12$ edges and $G = Z_6^1 - \Star(6,Z_6^1)$ has at least $8$
edges (where $6$ is chosen to be the vertex of the smallest
degree). But $F_5^1$ which is a tree on $5$ vertices has $4$ edges,
hence added to $G$ will result in a graph with at least $4$ edges
which cannot then be $C_3$. This ends the proof for this case.
\end{description}

\section{Claims for the proofs of Theorem \ref{thm:d=3_f2}}\label{app:d=3}

All Claims here are w.r.t $1$-dim complexes, namely graphs. For a
graph $G$ we denote by $$Odd(G) = \{v \in V(G)| ~ ~ deg(v,G) \equiv 1 ~
(\bmod ~ 2)\} $$

We use the following simple Claim on filling for $d=0$.
\begin{claim}
  \label{cl:basic}
Let $O \subseteq V$ with $|O|\equiv 0 ~ (\bmod 2)$, $~w \in O$ and
$y \in V$. Then there is a
$0$-deficit filling of $O$, i.e., a tree $T$ on $V$ with $Odd(T)=O$ in
which $\Star(w,T) = (y,w)$, namely the only neighbour of $w$ in $T$ is $y$.
\end{claim}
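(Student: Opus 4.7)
The approach is to reduce the construction to the standard $1$-dimensional $0$-deficit filling problem on the reduced vertex set $V' := V \setminus \{w\}$. Since $w$ is required to be a leaf of $T$ whose unique neighbour is $y$, the edge $(y,w)$ must belong to $T$; removing it yields a spanning tree $T'$ of $V'$. Conversely, any spanning tree $T'$ of $V'$ lifts to a candidate $T := T' \cup \{(y,w)\}$ in which $w$ is automatically a leaf attached to $y$, so that $\Star(w,T)=(y,w)$ holds. Hence the only non-trivial requirement is to choose $T'$ so that the resulting $T$ has the prescribed odd-degree set $O$.

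A routine parity count determines the necessary and sufficient condition on $T'$. Since $\deg_T(w)=1$, $\deg_T(y)=\deg_{T'}(y)+1$, and $\deg_T(v)=\deg_{T'}(v)$ for every other $v \in V'$, the condition $Odd(T)=O$ is equivalent to
\[
Odd(T') \;=\; O' \;:=\; (O\setminus\{w\}) \oplus \{y\}.
\]
Because $|O|$ is even and $w\in O$, a one-line case analysis on whether $y\in O$ shows that $|O'|$ is also even: it equals $|O|-2$ in the first case and $|O|$ in the second.

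It therefore suffices to construct a spanning tree $T'$ of $V'$ whose odd-degree set is the prescribed even set $O'$. This is exactly the $d=1$ instance of the $0$-deficit filling problem already treated in Section~\ref{sectionFillings}: one invokes Theorem~\ref{thm:d2_2} -- or, equivalently, the $d=1$ description of procedure $\FILL$ together with Claim~\ref{cl:many} -- to produce such a $T'$ whenever $O'$ is a non-empty even subset of $V'$. Setting $T := T' \cup \{(y,w)\}$ then completes the construction.

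The main conceptual step is the reduction itself, after which the proof amounts to a parity check plus an appeal to the already-established $d=1$ existence statement. The one point that requires care, and would be the obstacle if one insisted on full generality, is the degenerate situation $O=\{w,y\}$ with $|V|\geq 3$, which forces $O'=\emptyset$ while every spanning tree of $V'$ (having at least two leaves) has non-empty odd-set; in the contexts where Claim~\ref{cl:basic} is actually invoked in Appendix~\ref{app:d=3}, this configuration does not occur.
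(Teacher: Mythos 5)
Your reduction --- remove $w$, build a spanning tree $T'$ on $V' = V \setminus \{w\}$ with odd-set $O' = (O \setminus \{w\}) \oplus \{y\}$, then attach the pendant edge $(w,y)$ --- is the same as the paper's, and your parity verification is correct. The interesting difference is the degenerate case $O = \{w,y\}$, where your analysis is actually more careful than the paper's: you rightly observe that for $|V| \geq 3$ no spanning tree can have both $Odd(T) = \{w,y\}$ and $w$ pendant on $y$, since removing $(w,y)$ would leave a spanning tree of $V'$ with no odd vertex, which is impossible once $|V'| \geq 2$. The paper deals with this subcase by asserting that ``any Hamiltonian path with ends $w,y$ is the required $T$,'' but on such a path the unique neighbour of $w$ is the second vertex along, not $y$, so it satisfies $Odd(T) = O$ but not $\Star(w,T) = (y,w)$. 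Thus Claim~\ref{cl:basic} as stated is actually false when $O = \{w,y\}$ and $|V| \geq 3$, and the paper's treatment of that subcase is erroneous.

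However, your proposal does not resolve this: you assert without verification that $O = \{w,y\}$ cannot arise in Appendix~\ref{app:d=3}. That is not self-evident. In the proof of Claim~\ref{cl:case1}, for instance, $O = \Link(u, Z_{n-1}^1)$, $w = u'$ is a neighbour of $u$, and $y$ is chosen only subject to $(u',y) \notin G$ and $y \notin \{v_n,u,u'\}$; nothing there rules out $u$ having degree $2$ in $Z_{n-1}^1$ with $N(u) = \{u',y\}$ and $u',y$ non-adjacent, which is exactly the degenerate configuration. A genuine fix requires either adding a hypothesis to Claim~\ref{cl:basic} that excludes $O = \{w,y\}$ and then checking, in each invocation, that $y$ can be chosen to respect the exclusion (for instance $y \notin O$ whenever $|O|=2$), or else a separate ad hoc argument for the $|O|=2$ situations in the applications. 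In short, you spotted a real bug in the paper, but your write-up inherits the gap rather than repairing it.
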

\begin{proof}
If $O = \{w,y\}$ then any Hamiltonian path with ends $w,y$ is the
required $T$. Otherwise, define $O' = (O \setminus \{w\}) \xor \{y\}$,
and construct any tree $T'$ on $[n-1] \setminus \{u,u'\}$ with $Odd(T')
= O'$ (which is possible by constructing  $0$-deficit filling for $d=0$). Then add the edge
$(w,y)$ to $T'$ to obtain $T$. 
  \end{proof}
\noindent
{\bf Claims for Case 1}.

\begin{claim}
  \label{cl:case1}
Let $u,u' \in V(Z_{n-1}^1)$ such that $(u,u') \in Z_{n-1}^1$. Let
$v_{n-1}=u$ be the pivot vertex chosen in $\FILL(Z_{n-1}^1)$. There
are two distinct  $0$-deficit filling $F_{n-2}^1, \tilde{F}_{n-2}^1$
each being $\Fill(Z^1_{n-1} \setminus \{u\})$
such that $Z^1_{n-2} = (Z_{n-1}^1 \setminus \{u\}) \xor F_{n-2}^1$
contains $u'$ in its vertex set.
\end{claim}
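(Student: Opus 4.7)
The plan is to use Claim~\ref{cl:basic} to construct each required filling $F_{n-2}^1$ as a spanning tree of $V\setminus\{v_n,u\}$ whose odd-degree set is $Z_{n-2}^0 := \Link(u, Z_{n-1}^1)$ and whose unique edge at $u'$ is $(u',y)$ for a suitably chosen vertex $y$; it will then suffice to pick $y$ so that the incident edges of $u'$ in $A := Z_{n-1}^1 \setminus \{u\}$ and in $F_{n-2}^1$ do not cancel completely in $Z_{n-2}^1 = A \xor F_{n-2}^1$, and to observe that two different choices of $y$ give two distinct trees.

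First I would record the basic parities. Since $(u,u') \in Z_{n-1}^1$, we have $u' \in \Link(u,Z_{n-1}^1) = Z_{n-2}^0$, so $u'$ must lie in $Odd(F_{n-2}^1)$ for any $0$-deficit filling $F_{n-2}^1$ of $Z_{n-2}^0$, and Claim~\ref{cl:basic} is therefore applicable with $w := u'$. Because $Z_{n-1}^1$ is a $1$-cycle over $\F_2$, $deg(u',Z_{n-1}^1)$ is even, so $k := deg(u',A) = deg(u',Z_{n-1}^1) - 1$ is odd, and the neighborhood $N$ of $u'$ in $A$ has odd size $k \geq 1$.

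Next I would invoke Claim~\ref{cl:basic} on the ambient vertex set $V \setminus \{v_n,u\}$ with $O := Z_{n-2}^0$, $w := u'$, and any chosen $y \in V \setminus \{v_n,u,u'\}$: this produces a spanning tree $F$ with $Odd(F) = Z_{n-2}^0$ whose only edge at $u'$ is $(u',y)$. Over $\F_2$, the neighborhood of $u'$ in $Z_{n-2}^1 = A \xor F$ is exactly $N \xor \{y\}$, so $u' \in V(Z_{n-2}^1)$ if and only if $N \neq \{y\}$, a condition that fails only in the corner case $k=1$ with $y$ equal to the unique element of $N$.

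A short case split on $k$ closes the argument. If $k \geq 3$, every $y$ is admissible, and since $n \geq 7$ there are $n-3 \geq 4$ candidates, any two of which yield distinct trees (distinguished by $u'$'s unique neighbor in $F$). If $k = 1$, writing $N = \{z\}$, we need only avoid $y = z$; the remaining $n - 4 \geq 3$ choices again provide at least two distinct trees. The sole obstacle is this corner case $k=1$, where the single bad choice $y = z$ would leave $u'$ isolated in $Z_{n-2}^1$, but the hypothesis $n \geq 7$ affords ample room to sidestep it.
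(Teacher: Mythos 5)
Your proof is correct and takes essentially the same approach as the paper's: both invoke Claim~\ref{cl:basic} to build a spanning tree in which $u'$ has unique neighbor $y$, and then check that the edges at $u'$ do not cancel in the symmetric difference with $Z_{n-1}^1\setminus\{u\}$. Your case split on $k$ being $1$ or at least $3$ (exploiting the parity of $\deg(u',Z_{n-1}^1\setminus\{u\})$) is a slightly cleaner reformulation of the paper's dichotomy (existence of a non-neighbor of $u'$ versus $u'$ adjacent to everything), and you give a more explicit count supporting the ``two distinct fillings'' clause, which the paper's proof leaves largely implicit.
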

\begin{proof}

Let $G = Z_{n-1}^1 \setminus \{u\}$ be the  graph on the
vertex set $[n-2]$. Then $u' \in O=Odd(G)$. 

If there is $y \notin  \{v_n,u,u'\}$ such that $(u',y)
\notin G$, then let $T=T_{n-2}$ be a tree on $[n-1] \setminus \{u\}$ as
asserted by Claim \ref{cl:basic} w.r.t $O$,  $w=u'$ and $y$.
 The resulting $Z_{n-2}^1$ that is defined by $F_{n-1}^1=T_{n-2}$ will
contain the edge $(u,y')$  and hence $u'$ as a vertex.  

The above does not happen only if in $G$, $u'$ is connected to all the other
$n-3$ vertices in $[n]\setminus \{v_n, u,u'\}$.  Since $n-3 \geq 2$ this
means that it has degree at least two in $G$. Using the same
$T_{n-2}$ as above will result in $u'$ being
in $Z_{n-2}^1$. This is true  as $u'$ has at least two edges in $G$
of which at most one can be canceled by the single edge containing
$u'$ in $T$.
\end{proof}

\newpage
\noindent
{\bf Claims for Case 2}.

\begin{claim}
  \label{cl:case_2.1}
Let $u,u' \in V(Z_{n-1}^1)$ such that $(u,u') \notin Z_{n-1}^1$. Let
$v_{n-1}=u$ be the pivot vertex chosen in $\FILL(Z_{n-1}^1)$. There is
a $0$-deficit filling $F_{n-2}^1 = \FILL(Z^1_{n-1} \setminus \{u\})$
such that $Z^1_{n-2} = (Z_{n-1}^1 \setminus \{u\}) \xor F_{n-2}^1$
contains $u'$ in its vertex set.
\end{claim}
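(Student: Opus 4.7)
The goal is to build a spanning tree $H$ of $V' := [n-1] \setminus \{u\}$ (of size $n-2$) with $\mathrm{Odd}(H) = O$, where $O := \Link(u, Z_{n-1}^1) = \mathrm{Odd}(Z_{n-1}^1 \setminus \{u\})$, such that the neighborhood of $u'$ in $H$ differs from its neighborhood in $G := Z_{n-1}^1 \setminus \{u\}$. Since $Z_{n-2}^1 = G \oplus H$, the edges incident to $u'$ that survive in the symmetric difference are exactly those on which $N_H(u')$ and $N_G(u')$ disagree, so making these neighborhoods differ forces $u' \in V(Z_{n-2}^1)$, as required.

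Let $A := N_G(u')$. Observe that $u' \notin O$ (since $(u,u')\notin Z_{n-1}^1$), so $|A|$ is even; moreover $|A|\ge 2$ because $u' \in V(Z_{n-1}^1)$ and all its $Z_{n-1}^1$-edges sit inside $G$. Also $|O|\ge 2$ since $u \in V(Z_{n-1}^1)$. I plan to construct $H$ in which $u'$ has exactly two neighbors $z$ and $z'$, chosen to guarantee $\{z,z'\}\ne A$. If $A \ne V'\setminus\{u'\}$, pick $z \in V'\setminus(\{u'\}\cup A)$, so that $(u',z)\notin G$; the edge $(u',z)$ then survives in $G\oplus H$. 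Otherwise $A = V'\setminus\{u'\}$, whence $|A| = n-3 \ge 4 > 2$ and $N_H(u')\subsetneq A$ automatically. In either situation let $z' \in V' \setminus \{u',z\}$ be arbitrary.

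To realize such $H$ with $N_H(u') = \{z,z'\}$ and $\mathrm{Odd}(H) = O$, I would partition $V' \setminus \{u'\}$ into two nonempty parts $V_z \ni z$ and $V_{z'} \ni z'$ with $|O \cap V_z|$ odd (and hence $|O\cap V_{z'}|$ odd as well, since $|O|$ is even and $u' \notin O$). This parity-balanced split is easily engineered: $|O|\ge 2$ is even and $|V'\setminus\{u'\}| = n-3 \ge 4$, so one can place exactly one element of $O$ into $V_z$ together with $z$, send the remainder of $O$ (including $z'$) into $V_{z'}$, and distribute the other vertices so that both parts have size $\ge 2$. The $d=0$ base case of $\FILL$ then supplies spanning trees $T_z$ of $V_z$ and $T_{z'}$ of $V_{z'}$ with $\mathrm{Odd}(T_z) = (O\cap V_z)\oplus\{z\}$ and $\mathrm{Odd}(T_{z'}) = (O\cap V_{z'})\oplus\{z'\}$; these odd sets are of even size by the parity of the partition, so the fillings exist. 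Setting
\[
H \;:=\; T_z \;\cup\; T_{z'} \;\cup\; \{(u',z),(u',z')\},
\]
we obtain a spanning tree of $V'$ with $n-3 = |V'|-1$ edges, i.e.\ a $0$-deficit filling; a direct degree check against $d_H(v) = d_{T_z}(v) + \mathbf{1}[v=z]$ for $v\in V_z$ (and the analogous formula on $V_{z'}$), together with $d_H(u')=2$ and $u'\notin O$, verifies $\mathrm{Odd}(H) = O$.

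The only point requiring any care is the parity balancing of the partition $V'\setminus\{u'\} = V_z \sqcup V_{z'}$, which rests solely on $|O|$ being a positive even number, and this is automatic. All remaining steps are mechanical composition of building blocks already established: the existence of $0$-deficit tree fillings for any even $0$-cycle (the $d=0$ case of $\FILL$) and elementary tree surgery. Hence both $F_{n-2}^1 := H$ and the resulting $Z_{n-2}^1 = G \oplus H$ have the claimed properties.
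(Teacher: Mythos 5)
Your plan follows the same blueprint as the paper's: build a spanning tree $H$ of $V'=[n-1]\setminus\{u\}$ with $\mathrm{Odd}(H)=O:=\Link(u,Z_{n-1}^1)$ in which $u'$ has degree exactly $2$, and choose the two neighbours of $u'$ so that its edges in $G$ are not exactly cancelled in $G\oplus H$. The paper realizes such an $H$ by first building a tree $T'$ on $V'\setminus\{u'\}$ with $\mathrm{Odd}(T')=O$ and then subdividing one of its edges by $u'$; since subdivision inserts $u'$ at degree $2$ without changing the odd set, no new parity constraint ever appears at $u'$, and one only needs to pick the subdivided edge to avoid $N_G(u')$ exactly.

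The gap is in your realization step. You place exactly one element of $O$ in $V_z$ together with $z$, insist on $|V_z|\ge 2$, and then assert that a spanning tree $T_z$ of $V_z$ with $\mathrm{Odd}(T_z)=(O\cap V_z)\oplus\{z\}$ exists because this set has even size. Even size is necessary but not sufficient: if $z\in O$, then $O\cap V_z=\{z\}$ and $(O\cap V_z)\oplus\{z\}=\emptyset$, while any tree on $|V_z|\ge 2$ vertices has at least two leaves and therefore a nonempty odd set, so no such $T_z$ exists. Moreover you cannot always ensure $z\notin O$: when $A\ne V'\setminus\{u'\}$ your $z$ is drawn from $V'\setminus(\{u'\}\cup A)$, and nothing prevents that set from lying entirely inside $O$ (for instance $A=\{a,b\}$ and $O=V'\setminus\{u',a,b\}$), while when $A=V'\setminus\{u'\}$ you allow $z$ to be arbitrary, again permitting $z\in O$. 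A symmetric issue arises at $z'$ when $|O|=2$. The idea is salvageable --- set $V_z=\{z\}$ when $z\in O$ (the trivial tree then works and the degree bookkeeping still gives $\mathrm{Odd}(H)=O$), or place three elements of $O$ into $V_z$ when $|O|\ge 4$ --- but the existence claim for the component trees, as you have stated it, is false, and the partition-and-glue construction therefore does not go through without repair.
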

\begin{proof}
    $F^{(1)}_{n-2}$ should be a tree $T_{n-2}$ on $[n-2] = V \setminus
  \{v_{n-1},u\}$ that has $O=Odd(T_{n-2}) = \Link(u,Z_{n-1}^1)$ and
  such that $u' \in V(Z_{n-2}^1)$ that is resulted by $T_{n-2}$. The
  construction of $T_{n-2}^1$ in this case is simple. Construct first
  $T'$ on $[n-1]\setminus \{u,u'\}$ with $Odd(T')= O$.  This is
  possible by the $0$-dim filling case. Then subdivide an edge $e=(x,y)$
  of $T'$ by replacing it with $(x,u'),(u',y)$. The resulting
  $T_{n-2}$ is a tree with $Odd(T_{n-2})=O$ regardless of the choice
  of $e$. Now, if $u'$ is adjacent to $4$ or more vertices in
  $Z_{n-1}^1$ then any choice of $e$ will result in $u' \in
  Z_{n-2}^1$. If $u'$ is adjacent to exactly two neighbours $a,b$ in
  $Z_{n-1}^1$ then $e$ can be any edge of $T'$ except for $(a,b)$.

Hence since $n-4 \geq 2$, $T'$ has at least two edges, one of them is
certainly a good choice for $e$.
 \end{proof}

\noindent
{\bf Claims for Case 3}.

\begin{claim}
  \label{cl:case_2_d=3} 
Assume that $Z_{n-1}^1$ is composed of a disjoint union of at most two
cliques, each  being monochromatic with respect to $A(*)$, and of
different values if it is not just one clique. Then $Z_{n-1}^2$ can be
made friendly. Moreover,  two corresponding $F_{n-1}^2$  of
$0/1$-deficit as needed can be  constructed.
\end{claim}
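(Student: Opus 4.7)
My plan is to treat the two subcases of Case~3 separately, using Claim~\ref{cl:friendly_internal} to control the parities of $\deg(u, Z_{n-1}^2) \equiv A(u) \oplus B(u) \pmod{2}$. The key observation in both subcases is that $A$ is fixed (depending only on $Z_n^2$ and $v_n$), whereas $B$ can be tuned by our choice of the inner pivot $v_{n-1}$ and, one level deeper, of the base tree in the recursive call $\FILL(Z_{n-1}^1)$. In particular, $B(v_{n-1}) = n-3$ is forced, while for any other vertex $u$ we have a sum $\deg(u,\FILL(Z_{n-2}^1)) \oplus \deg(u, \Link(v_{n-1}, Z_{n-1}^1))$ in which both terms can be steered by a careful choice of base tree, in the spirit of Claims \ref{cl:case1} and \ref{cl:case_2.1}.

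\textbf{Subcase 3a (one clique).} Here $Z_{n-1}^1 = K_m$ on some $V' \subseteq V \setminus \{v_n\}$ with $m \geq 3$ odd, and $A \equiv a$ on $V'$. Pick $v_{n-1} \in V'$, giving $\deg(v_{n-1}, Z_{n-1}^2) \equiv a + n - 3 \pmod{2}$. If $V' \subsetneq V \setminus \{v_n\}$, take $u' \notin V' \cup \{v_n\}$; then $(v_{n-1}, u') \notin Z_{n-1}^1$, so by Claim~\ref{cl:friendly_internal} we have $B(u') \equiv \deg(u', \FILL(Z_{n-2}^1)) \pmod{2}$. Following the construction of Claim~\ref{cl:case_2.1}, I would choose the bottom tree $F_{n-2}^1$ so that $u' \in V(Z_{n-2}^1)$, and then set $v_{n-2} = u'$ to pin the parity of $\deg(u', \FILL(Z_{n-2}^1))$; this, together with the value of $A(u')$ (which may differ from $a$, since $u' \notin V'$), will be tuned to differ from $a + n - 3 \pmod{2}$. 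If instead $V' = V \setminus \{v_n\}$, then all $A$-values on $V(Z_{n-1}^2)$ are equal and friendliness must come entirely from $B$; here I would force $B(v_{n-1}) \not\equiv B(v_{n-2}) \pmod{2}$ by picking, at the innermost level, one of the at least two distinct trees guaranteed by Claim~\ref{cl:many}, chosen so that the degree of $v_{n-2}$ in it has the required parity.

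\textbf{Subcase 3b (two cliques with opposite $A$-values).} Here $Z_{n-1}^1 = K_{m_1} \sqcup K_{m_2}$ on $V_1' \sqcup V_2'$, with $A \equiv a_i$ on $V_i'$ and $a_1 \not\equiv a_2 \pmod{2}$. I would choose $v_{n-1} = u \in V_1'$ and $u' \in V_2'$; then $B(u) = n-3$, and since $(u, u') \notin Z_{n-1}^1$, Claim~\ref{cl:friendly_internal} yields $B(u') \equiv \deg(u', \FILL(Z_{n-2}^1)) \pmod{2}$. Because $A(u) \oplus A(u') = 1$, friendliness requires $B(u) \equiv B(u') \pmod{2}$, i.e. $\deg(u', \FILL(Z_{n-2}^1)) \equiv n - 3 \pmod{2}$. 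I would secure this by ensuring $u' \in V(Z_{n-2}^1)$ (as in Claim~\ref{cl:case1}) and then taking $v_{n-2} = u'$ when the natural parity works out, or else modifying the second-level tree $F_{n-3}^1$ in its neighborhood of $u'$ so as to flip the single parity that is off by one.

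In both subcases, the deficit ($0$ or $1$) of the resulting $F_{n-1}^2$ follows directly from Theorem~\ref{thm:d2_2} applied to $Z_{n-1}^1$, and the required pair of distinct fillings arises either by exchanging the roles of $u$ and $u'$ or by choosing inequivalent base trees provided by Claim~\ref{cl:many}, exactly as in the endings of Cases 1 and 2. The hardest point, which I expect to require the most care, is the degenerate situation where $Z_{n-1}^1$ is the complete graph on $V \setminus \{v_n\}$ with constant $A$: there are no ``outside'' vertices to break monochromaticity, and the distinction must be produced entirely through a two-level unfolding of the recursion so that the choice of $F_{n-3}^1$ flips a single degree parity at $v_{n-2}$ without disturbing any of the prescribed boundary conditions.
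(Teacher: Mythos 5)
Your high-level plan (track $\deg(u,Z_{n-1}^2)\equiv A(u)\oplus B(u)$, split into the one-clique and two-clique cases, and try to create a parity mismatch by steering $B$) correctly identifies what needs to happen, but the mechanism you propose for steering $B$ does not actually work, and this is precisely the hard point the paper has to work around.

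The gap is this: you want to make both your distinguishing vertices pivots at consecutive recursion levels ($v_{n-1}$ and $v_{n-2}$) and then tune $B(v_{n-2})$ by choosing $F_{n-3}^1$. But once a vertex $w$ is the pivot at the level-$k$ call of $\FILL(Z_k^1)$, the quantity $\deg(w,\FILL(Z_k^1))$ is \emph{forced}: the output is $\FILL(Z_{k-1}^1)\oplus\Cone(w,F_{k-1}^1)$, and $w$ meets only the coned part, so $\deg(w,\FILL(Z_k^1))=|F_{k-1}^1|=k-2$, independent of which spanning tree $F_{k-1}^1$ you pick. So in subcase~3a with $Z_{n-1}^1=K_{n-1}$, choosing $v_{n-2}$ as your second pivot gives $B(v_{n-2})=(n-4)\oplus 1=n-3=B(v_{n-1})$ (the $\oplus 1$ because $v_{n-2}$ is adjacent to $v_{n-1}$ in the clique), and since $A$ is constant you get $\deg(v_{n-1},Z_{n-1}^2)\equiv\deg(v_{n-2},Z_{n-1}^2)$ no matter what; there is nothing left to flip. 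The same arithmetic defeats your plan for subcase~3b: with $u=v_{n-1}$ in one clique and $u'=v_{n-2}$ in the other, $B(u)=n-3$, $B(u')=n-4$, and together with $A(u)\oplus A(u')=1$ the two degrees agree mod 2. Your fallback of ``modifying $F_{n-3}^1$ in the neighborhood of $u'$'' cannot rescue this, because $F_{n-3}^1$ has no influence on $\deg(u',\FILL(Z_{n-2}^1))$ once $u'$ is the level-$(n-2)$ pivot. There is also a smaller loose end in your $V'\subsetneq V\setminus\{v_n\}$ branch: you rely on $A(u')\neq a$ for an outside vertex $u'$, but nothing forces that.

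The paper's actual fix is to introduce a \emph{third} auxiliary vertex $y$ that is a pivot only at level $n-3$, and to control the two link-contributions along the way. Concretely: take $v_{n-1}=v$; choose $T_{n-2}$ to be the star centered at $y$, which makes $y$ vanish from $Z_{n-2}^1$ (so $\deg(y,\Link(v_{n-2},Z_{n-2}^1))=0$); pick $v_{n-2}=u\neq y$ so that $y$ reappears in $Z_{n-3}^1$; then set $v_{n-3}=y$, forcing $\deg(y,\FILL(Z_{n-3}^1))=n-5$. Unwinding Claim~\ref{cl:friendly_internal} twice gives $\deg(y,Z_{n-1}^2)=A(y)\oplus 1\oplus 0\oplus(n-5)$ versus $\deg(v,Z_{n-1}^2)=A(v)\oplus(n-3)$, and with $A(y)=A(v)$ these differ by exactly one, so $Z_{n-1}^2$ is friendly. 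In the two-clique case the paper does the same thing entirely inside the larger clique $K_\ell$ (so again $A(v)=A(y)$), using a star subdivided through the vertices outside $K_\ell$ to meet the parity constraints and then invoking Claim~\ref{cl:case_2.1} to bring $y$ back in at the right level. The essential idea you are missing is that the flexibility comes from a non-pivot vertex whose link-contributions you zero out at one level and whose degree you pin at the next; tuning the degree of a pivot is not possible.
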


\begin{proof}
Suppose first that $Z_{n-1}^1 = K_{n-1}$, namely it is just one
clique, monochromatic with respect to $A(*)$.

Let $v = v_{n-1}$ and $G=K_{n-1} \setminus \{v\}$ the resulting
clique on $[n-2]$. Then with $T_{n-2}$ being a star centered at
$y$, the resulting $Z_{n-2}^1$ does not have $y$ in its vertex
set. Choosing any $u \neq y$ as $u=v_{n-2}$ as the pivot vertex in
$\FILL(Z_{n-2}^1)$ will result in a cycle
$Z_{n-3}^1$ in which $y \in V(Z_{n-3}^1)$.    We then choose
the next pivot $v_{n-3}=y$ for $\FILL(Z_{n-1}^1)$.

It follows (by Claim \ref{cl:friendly_internal}) that $deg(v_{n-1}, Z_{n-1}^2) = A(v) \xor B(v) \equiv A(v)
\xor (n-3) ~ (\bmod 2)$. On the other hand Claim
\ref{cl:friendly_internal} implies that   $deg(y,Z_{n-1}^2) = A(v)
\xor B(y) \equiv A(v) \xor deg(y,\FILL(Z_{n-2}^1)) \xor 1$ where the $1$ comes
from that fact that $y$ is a neighbour of $v$ in $Z_{n-1}^1$. 
Using Claim \ref{cl:friendly_internal} again (the part on $(B(u))$),
implies that $deg(y,\FILL(Z_{n-2}^1) =
deg(y,\FILL(Z_{n-3}^1) \xor 0$ where the $0$ comes from the fact that
$y$ is not a neighbour of $v_{n-2}$.  Finally, one last application of
Claim \ref{cl:friendly_internal} implies that $deg(y,\FILL(Z_{n-3}^1) =
n-5$. Substituting  we get $deg(y,Z_{n-1}^2) = A(v) \xor 1 \xor (n-5)
\equiv 1 + deg(v,Z_{n-1}^2)$ and we conclude that $Z_{n-1}^2$ is friendly. 
We note that in all the above we assume that $Z_{n-3}^1$ is non empty
which is true since $n \geq 6$.

Assume now that $Z_{n-1}^1$ is a disjoint union of two cliques, each
monochromatic w.r.t  $A(*)$. Let $K_{\ell}$ be the largest of these
cliques. The situation here is very similar to the previous case: we
set $v_{n-1}=v$ for an arbitrary vertex in $K_\ell$.  Let $y \in
K_\ell \setminus \{v_{n-1}\}$. Assume we can construct a $T=T_{n-2}$ with $O= Odd(T) =
  V(K_\ell) \setminus \{v_{n-1}\}$ for which  
  $(u,y) \notin Z_{n-2}^1, ~ u \in V(Z_{n-2})$ and $y \in K_{\ell}
  \setminus \{v,u\}$. Suppose further that choosing $v_{n-2}=u$
  results in $Z_{n-3}^1$ for which $y \in 
  V(Z_{n-3}^1)$. If such $T$ exists then we are exactly  in the
  situation of the 
  previous case (w.r.t $Z_{n-3}, ~y, v$) and choosing $v_{n-3}=y$ will end
  the proof as in that case.   

To construct $T'$ as needed, we take a star centered at $y$ with leaves
$O \setminus \{y\}$. We then subdivide an arbitrary edge of this star
$e=(y,a)$ by inserting all other vertices not in $K_\ell$. Namely, we
replace $e$ by a path from $y$ to $a$ containing all vertices not in
$K_\ell$. Note that $Odd(T)$ is as needed. Further, $Z_{n-2}$ will not
contain the edges $(y,x)$ for every $x \in K_\ell \setminus
\{v,y\}$. Hence choosing $u$ to be any of these vertices $x$ and
constructing $T_{n-3}$ as asserted in Claim \ref{cl:case_2.1} w.r.t
$u$ and $u'=y$ (and $n$ replaced by $n-1$) will result in $Z_{n-3}^1$
that contains $y$ in its vertex set.

We note that to apply Claim \ref{cl:case_2.1} we needed $n
\geq 6$ but since we have replaced $n$ with $n-1$ we get that $n \geq
7$ is needed, which is correct by our assumptions. 
 
Finally, the above implies a construction of one $F_{n-1}^2 =
\Fill(Z_{n-1}^1)$ that is $0/1$-deficit as needed. Since in both cases
$Z_{n-1}^1$ contains a clique of size at least $3$ (as $n \geq 7$),
any permutation of the choices of the vertices inside one clique to
play the role of $u,v,y$ above will create
an isomorphic distinct $\Fill(Z_{n-1}^2)$  (this is since, e.g., 
$\Fill(Z_{n-1}^1)$ cannot be invariant to all such permutation on
account of the average degree of a pair is less than one, hence some
pairs are non-existant while some pairs are,  in the $1$-skeleton of any
acyclic $\Fill(Z_{n-1}^1)$).  
\end{proof}

{\bf Theorem \ref{thm:d=3_f2} for $n \leq 6$}.

\begin{claim}
  \label{cl:thm:d=3_f2_n<6}
Theorem \ref{thm:d=3_f2} is true for $n \leq 6$.
\end{claim}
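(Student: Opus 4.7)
My plan is exhaustive case analysis, stratified by $n$. For $n\leq 4$, the space ${\cal Z}_2(K_n^2)$ has dimension at most $1$, so either there are no nontrivial $2$-cycles, or the unique one is $\partial_3\{1,2,3,4\}$, all of whose vertices have the same (odd) degree; no friendly $2$-cycle exists and the statement is vacuous.

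For $n=5$, I would use the single linear dependence $\sum_{\sigma\in K_5^{(3)}}\partial\sigma=0$ among the five tetrahedral boundaries to observe that every nontrivial $2$-cycle is, up to complementation within $K_5^{(3)}$, either a single $\partial\sigma$ (all four vertices of $V(\sigma)$ have degree $3$, so the cycle is not friendly) or a sum $\partial\sigma+\partial\tau$ with $|\sigma\cap\tau|=3$ (the three shared vertices have degree $4$ and the two remaining have degree $3$, so the cycle is friendly). Since any $0$-deficit $3$-filling in $K_5^3$ must consist of $\binom{4}{3}=4$ of the $5$ available tetrahedra, and its boundary is necessarily the boundary of the omitted tetrahedron, these few cases can be settled by direct inspection.

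For $n=6$ and $n=7$, the counts explode: the $2$-cycle space of $K_6^2$ has dimension $\binom{5}{3}=10$, giving $1023$ nontrivial $2$-cycles, and each friendly one calls for a search through $\binom{n-1}{3}$-subsets of the $\binom{n}{4}$ tetrahedra of $K_n^3$. At this point I would defer to the computer program in Appendix \ref{sec:program}, which, for every nontrivial $2$-cycle, tests friendliness and, when relevant, enumerates candidate subsets of $3$-simplices, checks acyclicity, and verifies that the boundary matches. The main obstacle is that no short combinatorial argument appears to yield the existence of a $0$-deficit filling uniformly over all friendly cycles in these small regimes; the role of the base case is therefore played by the exhaustive certificate produced by the program, which then seeds the induction underlying Theorem \ref{thm:d=3_f2}.
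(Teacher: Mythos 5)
Your treatment of $n\le 4$ (vacuous, since the only nontrivial $2$-cycle is $\partial\sigma$ for a single tetrahedron and all its vertices have degree $3$) is correct, and for $n=6$ you defer to the exhaustive computer search, which is exactly what the paper does. The genuine problem is at $n=5$, where your ``direct inspection'' does not settle the cases in the direction you need --- it settles them in the opposite direction. You correctly note that every $0$-deficit acyclic $3$-filling in $K_5^3$ consists of $4$ of the $5$ tetrahedra and that its boundary is then $\partial\tau$ for the omitted tetrahedron $\tau$; and you correctly note that the friendly $2$-cycles on $5$ vertices are precisely those of the form $\partial\sigma+\partial\sigma'$ (the single boundaries $\partial\tau$ being non-friendly). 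Combining your two observations, no friendly $2$-cycle on $5$ vertices has a $0$-deficit filling at all: over $\F_2$ the only chains with boundary $\partial\sigma+\partial\sigma'$ are $\{\sigma,\sigma'\}$ and its complement, whose deficits are $2$ and $1$. So at $n=5$ the conclusion of Theorem \ref{thm:d=3_f2} fails as literally written, and the case analysis you sketch cannot be completed there; presenting it as ``settled by direct inspection'' papers over a counterexample that your own setup exposes.

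It is worth noting that the paper's verification implicitly concedes the same point: its manual discussion of $n=5$ only exhibits deficit-$1$ fillings for the friendly cycles, and the induction proving Theorem \ref{thm:d=3_f2} really anchors at the computer-checked base case $n=7$ (the program of Appendix \ref{sec:program} reports two fillings of deficit at most $1$ for every $2$-cycle at $n=7$, and two $0$-deficit fillings at $n=8$). A correct write-up of this base case must therefore say explicitly what is actually being verified for small $n$: for $n\le 4$ the friendly statement is vacuous, for $n=5$ only the weaker ``deficit at most $1$'' statement holds, and the full $0$-deficit statement for friendly cycles is certified (by machine) only from $n=6$ or $n=7$ on. Your proposal should flag this discrepancy rather than assert the theorem itself for all $n\le 6$.
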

\begin{proof}
  We have checked the statement for $n \leq 7$  by running the program
  in Section \ref{sec:program} below. It could be run for each $n \leq 7$. However, for $n=4,5$ the
  situation is simple enough to also  verify manually as explained
  below.

  For $n=4$, which is the smallest $n$ for which a non-empty $2$-cycle
  exists, the unique such cycle is $Z_4^2 = \partial_4 \sigma$, where
  $\sigma = (1,2,3,4)$ is the  unique $3$-simplex. 
  Then $\sigma$  (of size $1$) is a $0$-deficit filling. 

For $n=5$ the possible non-empty $2$-cycles (up to isomorphism) are:
$\partial K_4^3$, $\partial (1,2,3,5) + \partial (2,3,4,5)$, and
$\partial(5,1,2,3) +\partial(5,1,2,4) + \partial (5,2,3,4)$.

The first is not friendly. It can be verified that the $1$-deficit
$F_4^2$ that is resulted by $\FILL(Z_5^2) \setminus \{4\}$ will result
in a non empty cycle $Z_4^2$. Hence by the case $n=4$, it will have
a $0$-deficit filling which will  result  in a $1$-deficit filling of
$Z_5^2$.
 For the 2nd case, (which is friendly), one should, not apply $\FILL$
 with the 'good' vertex, as this results in an {\em empty}
 $Z_4^2$. However, if one chooses the bed vertex (e.g., $v_5=5$) one
 gets $Z_4^1 = C_4$ which has two distinct $1$-deficit fillings, one
 resulting in a non-empty $Z_4^2$ which by the case $n=4$ has  a
 $0$-deficit filling. Altogether, this gives a $1$-deficit filling for
 $Z_5^2$.

The same reasoning applies to the last case.

\subsection{program for small $n$'s}\label{sec:program}
We have checked the case of $\F_2, d=3$ and  small $n$ (for $n\leq 8$), by
a $C^++$ program that is available on the 2nd authors cite:

\begin{verbatim} 
http://cs.haifa.ac.il/~ilan/online-papers/online-papers.html/fillings.cpp
\end{verbatim}

The program (exponential in $n$) runs over all possible acyclic sets
on $K_n^3$, and for each it computes the boundary (which is a
$2$-cycle). In doing so, it also register for each $2$-cycle how many
times  it was
found as a boundary of a $0$-deficit or a $1$-deficit tree. 

For $n=7$ every cycle was found to be a boundary of at least two
$1$-deficit or $0$-deficit trees. For $n=8$ all cycles are boundaries
of at least two distinct $0$-deficit trees.
\end{proof}
\end{document}